\documentclass[twoside,12pt,reqno]{amsart}

\date{\today}

%%Dimensiones de la pagina%%%%%%%
\usepackage{accents}
\usepackage[latin1]{inputenc}
\topmargin 0.5cm
\textheight = 40\baselineskip
\textwidth 15cm
\oddsidemargin 0.9cm
\evensidemargin 0.9cm

%%%%%%%%%
%\usepackage[utf8]{inputenc}
\usepackage{pstricks}
\usepackage{enumerate}	
\usepackage{graphicx,color}
\usepackage{caption}
\usepackage[english]{babel}
\usepackage[toc,page]{appendix}
\usepackage{amssymb, mathrsfs, amsfonts, amsmath,amsthm}
\usepackage{amsbsy, hyperref}
\usepackage{latexsym}
\usepackage{booktabs}
\usepackage{tikz}
\usepackage{eurosym} %pacote para o simbolo do euro
\usepackage{newfloat}%%pacote para criar nova lista
\usepackage{hyperref}
\usepackage[alphabetic, msc-links]{amsrefs}

\usepackage{hyperref}
\usepackage{url}

\usepackage{mathtools}

\definecolor{r}{rgb}{.9,0.1,.3}

\numberwithin{equation}{section}

\newtheorem{Theorem}{Theorem}[section]
\newtheorem{Proposition}[Theorem]{Proposition}
\newtheorem{Definition}[Theorem]{Definition}

\newtheorem{Remark}[Theorem]{Remark}
\newtheorem{Lemma}[Theorem]{Lemma}

\newtheorem{remark}[Theorem]{Remark}

\newcommand{\R}{\mathbb{R}}

\newcommand{\n}{\mathbb{N}}

\newcommand{\h}{\mathbb{H}}

\newcommand{\p}{\partial}

\newcommand{\dist}{\operatorname{dist}}
\newcommand{\Div}{\operatorname{div}}

\author[E. S. Gama]{Eddygledson S. Gama}
\address[Gama]{
  Departamento de Matem\'atica,
  Universidade Federal do Cear\'a, Bloco 914, Campus do Pici,
  Fortaleza, Cear\'a, 60455-760, Brazil.
  Departamento de Geometr\'\i{}a y Topolog\'\i{}a,
  Universidad de Granada,
  18071 Granada, Spain.
}
\email{eddygledson@gmail.com}

\author[E. Heinonen]{Esko Heinonen}
\address[Heinonen]{
Departamento de Geometr\'i{}a y Topolog\'\i{}a,
  Universidad de Granada,
  18071 Granada, Spain.
}
\email{ea.heinonen@gmail.com}

\author[J. H. Lira]{Jorge H. De Lira}
\address[Lira]{
  Departamento de Matem\'atica,
  Universidade Federal do Cear\'a, Bloco 914, Campus do Pici,
  Fortaleza, Cear\'a, 60455-760, Brazil.
}

\email{jorge.lira@mat.ufc.br}
\author[F. Mart\'in]{Francisco Mart\'\i{}n}
\address[Mart\'in]{
  Departamento de Geometr\'i{}a y Topolog\'\i{}a,
  Universidad de Granada,
  18071 Granada, Spain.
}
\email{fmartin@ugr.es}

\thanks{
E. S. Gama is supported by Coordena\c{c}\~ao de
Aperfei\c{c}oamento de Pessoal de N\'{i}vel Superior - Brasil CAPES/PDSE/88881.132464/2016-01. E. Heinonen supported by a grant from the Finnish Academy of Science and Letters. J. H. de Lira is supported by PRONEX/FUNCAP/CNPq PR2-0101-00089.01.00-15. and
CNPq/Edital Universal 409689/2016-5. F. Mart\'in is partially supported by the MINECO/FEDER grant MTM2017-89677-P and  by the
Leverhulme Trust grant IN-2016-019.}

\subjclass[2010]{Primary 53C21, 53C44, 53C42}
\keywords{Mean curvature equation, translating graphs, Dirichlet problem}

\title[Translating horizontal graphs]{Jenkins-Serrin problem for translating horizontal graphs in $M\times\R$}

\begin{document}
\maketitle
\begin{abstract} 
	We prove the existence of horizontal Jenkins-Serrin graphs that are translating solitons of the mean curvature flow in Riemannian product manifolds $M\times\R$. Moreover, we give examples of these graphs in the cases of $\R^3$ and $\mathbb{H}^2\times\R$.
\end{abstract}

\section{Introduction}

Let $M^n$ be a Riemannian manifold and $\Omega\subset M$ be a domain (not necessarily bounded) with piecewise smooth boundary. Assume that the boundary can be composed as $\p\Omega = \Gamma_0 \cup \Gamma_1 \cup \Gamma_2 \cup N$, where the sets $\Gamma_i$ are open, disjoint and smooth, and the set $N$ is closed subset so that the $(n-1)$-dimensional Hausdorff measure $\mathcal{H}(N)=0$. 
Then a classical problem is to find the sufficient and necessary conditions for the existence of prescribed mean curvature surfaces with possibly infinite boundary data, or more precisely, to solve the Dirichlet problem
\begin{equation}\label{Div. equation}
\begin{cases}
	\Div\left(\frac{\nabla u}{\sqrt[]{1+|\nabla u|^2}}\right) = H(x), &\text{in }\ \Omega; \\
	u=u_0,  &\text{on } \ \Gamma_0; \\
	u=+\infty, &\text{on } \ \Gamma_1;\\
	u=-\infty, &\text{on } \ \Gamma_2,
\end{cases}
\end{equation}
where $H \colon M\to\R$ is a Lipschitz function and $u_0 \colon \Gamma_0\to\R$ is a continuous function.

The most famous example of solutions of \eqref{Div. equation} in $\R^2$ with $\Omega = \left[ -\pi/2, \pi/2 \right] \times \left[ -\pi/2, \pi/2 \right]$ was given by H. Scherk in 1834. Namely, he proved that the function $u=\log (\cos x / \cos y )$ is a solution of \eqref{Div. equation} with $\Gamma_0=\varnothing$ and $H\equiv0$, obtaining the Scherk's minimal surface. More than a hundred years later H. Jenkins and J. Serrin \cite{Jenkins-Serrin} found the necessary and sufficient conditions for the existence of solutions of \eqref{Div. equation} in $\R^2$ with $H\equiv0$. Their clever idea was to use a part of Scherk's example as a barrier for sequences of solutions, and so they related the existence of solutions of  \eqref{Div. equation} with algebraic conditions involving the length of ``admissible polygons" in the domain.  Later the Dirichlet problem \eqref{Div. equation} became known as the Jenkins-Serrin problem.

J. Spruck \cite{Spruck-Infinite} extended the results of Jenkins and Serrin for constant mean curvature surfaces in $\R^2$ and gave local existence for general domain in $\R^n$. On the other hand, using different methods, U. Massari \cite{Massari} proved that it is possible to study also the case of prescribed mean curvature, and extended the results for solutions of \eqref{Div. equation} when $H$ is not a constant in $\R^n$ but satisfies some structural conditions. His idea was to replace the algebraic conditions involving the length of admissible polygons by conditions on certain functionals defined on Caccioppoli sets. See also E. Giusti \cite{Giusti} for a more detailed explanation of Massari's ideas in the case $H\equiv0$. 

More recently the Jenkins-Serrin problem has been studied in many different settings and we mention here some of the most closely related works to our current paper. B. Nelli and H. Rosenberg \cite{Nelli-Rosenberg} studied the existence of minimal graphs in $\mathbb{H}^2\times\R$ for domains $\Omega\subset \h^2$, A. L. Pinheiro \cite{Pinheiro} extended their work into $M^2\times\R$ with $M^2$ a general Riemannian surface and $\Omega \subset M^2$ geodesically convex domain, and M. H. Nguyen \cite{Nguyen} extended these results further into the case of $\text{Sol}_3$. Possibly unbounded domains were studied in \cite{Mazet-Rodriguez-Rosenberg}, and for other works, also with the CMC case, one should see \cite{Collin-Rosenberg, Folha-Melo, Folha-Rosenberg,  Galvez-Rosenberg, Hauswirth-Rosenberg-Spruck, Klaser-Menezes, Younes}. Another interesting paper, that is in slightly different setting, is \cite{eichmair-metzger} where M. Eichmair and J. Metzger studied the existence of Scherk type solutions for the Jang's equation on Riemannian manifolds with dimension at most 7. More detailed description of earlier results can be found from \cite{GHLM}, where the authors obtained also a Jenkins-Serrin type result for translating graphs.

In this paper we are considering the Jenkins-Serrin problem for translating horizontal  graphs in $M^2\times\R$, where $M^2$ is a $2$-dimensional Riemannian manifold possessing a non-singular Killing vector field $Z$. These translating horizontal graphs are translating (into the vertical direction $\partial_t$) solitons of the mean curvature flow that can be considered as graphs of a function $u\colon \mathbb{P} \to \R$ defined on a vertical plane $\mathbb{P} \subset M^2\times\R$. Here the plane $\mathbb{P}$ will be a totally geodesic leaf from the orthogonal distribution of the Killing field $Z$, and therefore the original Riemannian product can be written as a warped product $M^2\times\R = \mathbb{P}\times_\rho \R$, where $\rho$ is a smooth warping function. 

Due to a result by T. Ilmanen, the translating solitons can be considered as minimal surfaces in $M\times\R$ equipped with a conformally changed metric. This, and the warped structure, leads to a modification of the PDE in \eqref{Div. equation}. More precisely, we are considering solutions to the equation
	\begin{equation}\label{JS-sol-eq}
	\Div_{\mathbb{P}} \left( f^2 \frac{\nabla u}{\sqrt{1 + f^2 |\nabla u|^2}} \right) = 0,
	\end{equation}
where $\Div_{\mathbb{P}}$ and the gradient are taken with respect to the Ilmanen's metric restricted to the vertical plane $\mathbb{P}$, and the function $f$ depends on the warping function $\rho$ and on the Ilmanen's conformal change. This structure causes some difficulties for the extension of the Jenkins-Serrin theory, since after the conformal change of the metric, the plane $\mathbb{P}$ is not complete. However, these problems can be overcome by using some ideas that were developed in \cite{eichmair-metzger} and  \cite{Hoffman}.

It is worth to point out that equation (\ref{JS-sol-eq}) is formally equivalent to the minimal surface equation for graphs in warped product spaces with warping function $f$. This suggests that the results here can be easily adapted to this setting. In particular, the main steps in our construction could provide Jenkins-Serrin type theorems in warped product spaces.

This paper is structured as follows. In Section \ref{Preliminaries} we introduce the notation that will be used, explain the setting in detail and justify the PDE \eqref{JS-sol-eq}. The local theory, that is needed to extend the classical Jenkins-Serrin ideas, is developed in Section \ref{Local J-S}, and in Section \eqref{E-J-S} we prove our main result, Theorem \ref{Existence}, that solves the Dirichlet problem \eqref{Div. equation} with the PDE \eqref{JS-sol-eq}. Uniqueness of these solutions is obtained in Theorem \ref{thm-uniq}. The paper is finished by Section \ref{Examples}, where we give examples of domains in $\R^3$ and $\h^2\times\R$ that satisfy the conditions of the existence result Theorem \ref{Existence}.

\section{Preliminaries} \label{Preliminaries}
In this section we will introduce the principal concepts that we are going to use along of the paper. The notation follows mainly \cite{DHL} and \cite{Lira}.

\subsection{Translating solitons} Let $M^2$ be a complete Riemannian surface and $Z$ a non-singular Killing vector field in $M^2.$ We can see $Z$ as a Killing vector field in $M^2\times\R$ by horizontal lift of $Z$, $Z(p,t)\coloneqq Z(p)$ for all $(p,t)\in M^2\times\R$. Clearly this is a Killing field in $M^2\times\R$ endowed with product metric $g_0\coloneqq \sigma+dt^2,$  where $\sigma$ is a Riemannian metric in $M^2.$

Let $\mathbb{P}$ be a fixed totally geodesic leaf of the orthogonal distribution associated to $Z$ in $M^2\times\R$. Since $Z$ is a horizontal lifting of a Killing field in $M^2,$ then $\mathbb{P}=\Gamma\times\R,$ where $\Gamma$ is a geodesic associated to orthogonal distribution of $Z$ in $M$. Let $\Psi\colon \mathbb{P}\times\R\to M\times\R$ be the flow generated by $Z$. Using this flow we can get local coordinates as follows. If we take any local coordinate $x$ in $\Gamma$, then we obtain coordinates for a point $p\in M^2\times\R$ using the flow of $Z$, i.e., 
$p=\Psi((x,t),s)$. Therefore $(x,t,s)$ is a local coordinate for $\mathbb{P}\times \R =M^2\times\R$. Moreover, the corresponding coordinate vector fields are given by
\begin{align*}
\partial_s(x,t,s) &= Z(\Psi((x,t),s)); \\ 
\partial_t(x,t,s) &= \Psi_*((x,t),s) \partial_t(x,t); \\ 
\partial_x(x,t,s) &= \Psi_{*}((x,t),s) \partial_x(x,t).
\end{align*}

Using these coordinate vector fields we see that the components of the product metric are given by
\begin{align*}
g_{11} &= \langle\partial_s,\partial_s\rangle=:\rho^2(x), \quad  g_{12}=\langle\partial_s, \partial_x\rangle=0,\quad  g_{13}=\langle\partial_s,\partial_t\rangle=0 \\ 
 g_{22} &= \langle\partial_x,\partial_x\rangle=: \varphi^2(x), \quad  g_{23}=\langle\partial_t,\partial_x\rangle=0,\ \operatorname{and}\ g_{33}=\langle\partial_t,\partial_t\rangle=1.
\end{align*}
Therefore
	\[
	g_0=\varphi^2(x)dx^2+\rho^2(x)ds^2+ dt^2
	\]
and we see that $M^2\times\R$ is locally a warped product. Motived by this, we consider from now on that $M^2=S\times_{\rho}\R$, where the first factor $S$ may be either $\mathbb{S}^1$ or $\R$ endowed with a Riemannian metric $\varphi^2(x)dx^2$  and $\rho$ is any positive smooth function in $S$. With this convention $\mathbb{P}=S\times\R,$ with Riemannian metric $h_0 \coloneqq \varphi^2(x)dx^2+ dt^2$ and $M^2\times\R=\mathbb{P}\times_{\rho}\R$. Moreover, a horizontal graph in $M\times\R$ over a domain $\Omega\subset \mathbb{P}$ means a surface $\Sigma\subset M^2\times\R$ given by
\[
\Sigma=\{\Psi(x,t,u(x,t))\in \mathbb{P}\times_{\rho}\R \,(=M^2\times\R) \colon (x,t)\in\Omega \},
\]
where $u\colon\Omega\to\R$ is a smooth function. Sometimes, to simplify the notation, we will write also ${\rm Graph}[u]$ to mean the horizontal graph of $u$.
\begin{Remark}
The horizontal graphs, that we are considering in this paper, are graphs in the direction of the Killing field $\partial_s$. However, we are representing them as ``vertical" graphs since they are graphs in $\mathbb{P} \times_{\rho}\R$ ``over" a domain in $\mathbb{P}$. Therefore the last coordinate is the coordinate associated to the flow lines of $\partial_s$. 
Moreover, for us a horizontal line means a flow line of the vector field $\partial_s$, i.e., $\Psi(q,\R) =  \{\Psi(q,s) \in \mathbb{P}\times_{\rho}\R \, (=M\times\R) \colon q\in\mathbb{P},\, s\in\R\}$. With abuse of notation, we will also write $\Psi(q,\R) = \{q\} \times \R$.
\end{Remark}

We say that a surface $\Sigma$ (not necessarily a graph) in $M^2\times\mathbb{R}$ is a translating soliton with respect to the parallel vector field $X=\partial_t$  (with translation speed $c\in\mathbb{R}$) if 
\begin{equation*}
\textbf{H}=c\,X^{\perp},
\end{equation*}
where $\textbf{H}$ is the mean curvature vector field of $\Sigma$ and $\perp$ indicates the projection onto the normal bundle of $\Sigma$. In particular, if N is a normal vector field along $\Sigma$, then we have
\begin{equation}\label{TS}
{\rm H}=c\langle X, N \rangle,
\end{equation}
where $\langle \cdot, \cdot\rangle=\sigma+dt^2$ denotes the Riemannian product metric in $M^2\times\mathbb{R}$.
 
In \cite{Ilmanen} T. Ilmanen proved the following useful relation between the translating solitons and minimal surfaces with respect to the so-called Ilmanen's metric (conformal change of the metric).
\begin{Lemma}[T. Ilmanen]\label{Ilmanen}
Translating solitons with translation speed $c\in\mathbb{R}$, and translating direction $\partial_t$, are minimal hypersurfaces in the product $M\times\R$ with  respect to the Ilmanen's metric $g_c=e^{ct}g_0$.
\end{Lemma}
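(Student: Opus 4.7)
The plan is to derive how the mean curvature of a hypersurface in $M\times\R$ transforms under the conformal change $g_0\mapsto g_c=e^{ct}g_0$, and to recognize the translator equation \eqref{TS} as the vanishing condition for the new mean curvature.

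First I would invoke the standard conformal transformation law: on an $(n+1)$-dimensional manifold with metrics related by $\tilde g=e^{2\phi}g$, a hypersurface $\Sigma^n$ with $g$-unit normal $N$ and $g$-mean curvature $H$ has $\tilde g$-mean curvature
\[
\tilde H = e^{-\phi}\bigl(H - n\,N(\phi)\bigr).
\]
This can be established either from the first variation of area (the induced area element scales by $e^{n\phi}$ while the unit normal rescales by $e^{-\phi}$), or directly from the identity
\[
\tilde\nabla_V W = \nabla_V W + V(\phi)W + W(\phi)V - g(V,W)\nabla\phi
\]
for the Levi-Civita connections, from which the transformation of the second fundamental form follows pointwise and one takes the trace with respect to $\tilde g|_\Sigma$.

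Next I would specialize to $g=g_0$ and $\tilde g=g_c$, so that $\phi=ct/2$ and, since $\Sigma$ is a surface in a three-dimensional ambient manifold, $n=2$. In the product metric $g_0=\sigma+dt^2$ the $g_0$-gradient of the coordinate function $t$ is the unit parallel vector field $X=\partial_t$, so $\nabla\phi=(c/2)X$ and $N(\phi)=(c/2)\langle X,N\rangle$. Substituting into the transformation law yields
\[
\tilde H = e^{-ct/2}\bigl(H - c\langle X,N\rangle\bigr),
\]
whence $\tilde H\equiv 0$ on $\Sigma$ if and only if $H = c\langle X,N\rangle$, which is precisely \eqref{TS}.

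The only genuine ingredient is the conformal transformation formula for the mean curvature; once it is in hand the equivalence reduces to a one-line comparison, with no compactness or regularity subtlety since the argument is purely pointwise. The place where one has to be careful is with the sign and normalization conventions for $H$ and the choice of orientation $N$, but these are fixed consistently by the paper's setup, so the formula and \eqref{TS} will line up as stated.
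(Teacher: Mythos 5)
Your argument is correct, and it is worth noting that the paper itself gives no proof of this lemma at all: it is stated as a known result and attributed to Ilmanen with a citation, so your conformal-change computation supplies the verification the paper omits. The key formula you use is right with the paper's conventions: writing $\tilde g=e^{2\phi}g_0$ with $\phi=ct/2$, the connection identity gives for tangent vectors $V,W$ to $\Sigma$ the relation $\tilde h(V,W)=e^{\phi}\bigl(h(V,W)-g_0(V,W)\,N(\phi)\bigr)$, and tracing with $\tilde g|_\Sigma=e^{2\phi}g_0|_\Sigma$ yields $\tilde H=e^{-\phi}\bigl(H-n\,N(\phi)\bigr)$, which with $\nabla\phi=(c/2)\partial_t$ and $n=2$ reduces to $\tilde H=e^{-ct/2}\bigl(H-c\langle X,N\rangle\bigr)$, exactly the equivalence with \eqref{TS}; the sign matches the paper's convention that the scalar mean curvature is the trace of the second fundamental form with respect to the chosen normal $N$. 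One point you could make explicit is that the exponent $ct$ in $g_c$ is tied to the hypersurface dimension being $2$ (ambient $M^2\times\R$): for $n$-dimensional translators in $M^n\times\R$ the same computation forces the Ilmanen metric to be $e^{\frac{2c}{n}t}g_0$, so the lemma as stated is genuinely a statement about the surface case considered in the paper.
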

Since we are considering the Riemannian metric $g_{0}=h_0+\rho^2ds^2=\varphi^2(x)dx^2+dt^2+\rho^2(x)ds^2$, the conformal change of Ilmanen can be written as 
\[
g_c=e^{ct}(\varphi^2(x)dx^2+dt^2+\rho^2(x)ds^2) \eqqcolon h_c+e^{ct}\rho^2(x)ds^2,
\]
where $h_c$  denotes the restriction of Ilmanen's metric $g_c$ to $\mathbb{P}$. Note that $g_c$ is still a warped metric. From now on we will always consider the metric $h_c$ in $\mathbb{P}$ and the metric $g_c$ in $M\times\R.$ Also, to simplify the notation we will denote by $f\colon M\times\R\to\R$ the function $f(x,t)= e^{\frac{c}{2}t}\rho(x)$.

\begin{Remark}\label{Ilmanen's metric.}
Notice that the Ilmanen's metric is not a complete metric in $M\times\R$ but we will need that $(M\times\R,g_0)$ is complete. 
\end{Remark}

When a surface $\Sigma$ in $M\times\R$ is a horizontal graph of a function $u \colon \Omega \subset \mathbb{P} \to\R$, then $\Sigma$ can be oriented by the unit normal vector field
\[
N=\frac{1}{f}\frac{\partial_s}{W} - f\frac{\nabla u}{W},
\]
where, to simplify the notation, we denote by $\nabla u$ the translation $\Psi_*\nabla u$ from $x\in \Omega$ to the point $\Psi(x, u(x))\in \Sigma$. Moreover, from \eqref{TS} we can check that $u$ satisfies the partial differential equation
\begin{equation}\label{soliton}
	\Div_\mathbb{P}\left(f^2\frac{\nabla u}{W}\right)=0 \quad \operatorname{in} \ \Omega,
\end{equation}
where $W\coloneqq \sqrt{1+f^2h_c(\nabla u,\nabla u)}$, and the gradient and divergence are taken with respect to the metric $h_c$ in $\mathbb{P}$. Indeed, observe that 
\[
 N =\frac{1}{f}\frac{\partial_s}{W}-f\frac{\nabla u}{W}=\frac{\partial_s}{f^2W_f}-\frac{\nabla u}{W_f},
\]
where $fW_f=W.$ Since ${\rm Graph}[u]$ is a minimal surface in $(M\times\R,g_c)$ we have
\begin{align*}
0&=\Div_\Sigma\left( N \right)=\Div_{M\times\R}\left({\rm N}\right)=\Div_{M\times\R}\left(\frac{\partial_s}{f^2W_f}-\frac{\bar{\nabla} u}{W_f}\right)\\
&= \Div_{M\times\R}\left(\frac{\partial_s}{f^2W_f}\right)-\Div_{M\times\R}\left(\frac{\bar{\nabla} u}{W_f}\right)\\
&= \frac{1}{f^2W_f}\Div_{M\times\R}\left(\partial_s\right)+g_c\left(\nabla\left(\frac{1}{f^2W_f}\right),\partial_s\right)-\Div_{M\times\R}\left(\frac{\bar{\nabla} u}{W_f}\right)\\
&=-\Div_{M\times\R}\left(\frac{\bar{\nabla} u}{W_f}\right)=-\frac{1}{f^2}g_c\left(\bar{\nabla}_{\partial_s}\left(\frac{\bar{\nabla} u}{W_f}\right),\partial_s\right)-\Div_{\mathbb{P}}\left(\frac{\nabla u}{W_f}\right)\\
&=\frac{1}{f^2}g_c\left(\bar{\nabla}_{\partial_s}\partial_s,\frac{\bar{\nabla} u}{W_f}\right)-\Div_{\mathbb{P}}\left(\frac{\nabla u}{W_f}\right),
\end{align*}
where $\nabla$ denotes the connection in $(\mathbb{P},h_c=g_c|\mathbb{P})$, $\bar\nabla$ the connection in $(M\times\R,g_c)$ and we have used the fact that $\mathbb{P}$ is totally geodesic in $M\times\R.$ As $g_c(\partial_s,\partial_s)=f^2$, one obtains
\[
\bar{\nabla}_{\partial_s}\partial_s=-f\bar{\nabla}f
\]
and hence, using again that $\mathbb{P}$ is totally geodesic, we conclude that
\[
0=h_c\left(\frac{\nabla f}{f},\frac{\nabla u}{W_f}\right)+\Div_{\mathbb{P}}\left(\frac{\nabla u}{W_f}\right)=\frac{1}{f}\Div_{\mathbb{P}}\left(f\frac{\nabla u}{W_f}\right)=\frac{1}{f}\Div_{\mathbb{P}}\left(f^2\frac{\nabla u}{W}.\right)
\]

%%%%%%%%%%%%%%%%%%%%%%%%%%%%%
%%%%%%%%%%%%%%%%%%%%%%%%%%%%%

\subsection{Conformal geometry of $M\times\R$} 
Here we will collect some computations about the conformal structure of $(M\times\R,g_c)$. Recall that we have $f(x,t) = e^{\frac{c}{2}t}\rho(x)$ and $\nabla$ denotes the connection in $(\mathbb{P},h_c=g_c|\mathbb{P})$, $\bar\nabla$ the connection in $(M\times\R,g_c)$, and we use the convention that the mean curvature is just the trace of the second fundamental form.

Let $\gamma \colon [0,1]\to M\times\R$ be a parametrized curve in $M\times\R$. We define the $f$-length of $\gamma$ by
\begin{equation}\label{f-Length}
	{\rm L}_f[\gamma]=\int_0^1f(\gamma(r)) \sqrt{g_c(\gamma'(r),\gamma'(r))_{\gamma(r)}} \, dr.
\end{equation}

We will work with a special type of curves that are defined in the following way.

\begin{Definition}
Let $\gamma$ be a curve in $M\times\R$ . We say that $\gamma$ is an $f$-geodesic if 
\begin{equation}\label{f-geodesic equation}
\bar{\nabla}_{\partial_r}\gamma'=g_c(\gamma',\gamma')\frac{\bar{\nabla} f}{f}-2g_c\left(\frac{\bar{\nabla} f}{f},\gamma'\right)\gamma',
\end{equation}
where $\bar{\nabla}_{\partial_r}\gamma'$ denotes the covariant derivative of $\gamma'$ along $\gamma$ with respect to $g_c$.
\end{Definition}

\begin{Definition}[$f$-curvature]
Let $\gamma$ be a curve in $\mathbb{P}$. The (scalar) $f$-curvature of $\gamma$ is
	\begin{equation}\label{f-geodesic curvature}
		{\rm k}_f[\gamma]:={\rm k}_{h_c}[\gamma]-h_c\left(\frac{\nabla f}{f},N\right),
	\end{equation}
where ${\rm k}_{h_c}[\gamma]$ denotes the geodesic curvature of $\gamma$ in $(\mathbb{P},h_c)$ and $N\in T\mathbb{P}$ denotes the unit normal along $\gamma$.
\end{Definition}

Before proceeding, we remark some properties of $f$-geodesics that will be used later. 
\begin{Remark}\label{f-prop.}
\begin{enumerate}
\item Let $\gamma$ be a curve in $\mathbb{P}$. Consider the surface (Killing cylinder) $\gamma\times\R = \Psi(\gamma,\R)\subset M\times\R$ ruled by the flow lines of $\partial_s$ passing through $\gamma$. It is straightforward to see that
\begin{equation*}
{\rm k}_f[\gamma]={\rm H}_{\gamma\times\R},
\end{equation*}
where ${\rm H}_{\gamma\times\R}$ denotes the mean curvature of $\gamma\times\R$ in $(M\times\R,g_c)$.
Namely,  $\{\Psi_*\gamma', \partial_s/f\}$ is an orthonormal frame and therefore the mean curvature vector field of $\gamma\times \mathbb{R}$ is given by
	\begin{align*}
	\left( \bar\nabla_{\Psi_*\gamma'} \Psi_*\gamma' + \bar\nabla_{\partial_s/f}( \partial_s/f)   \right)^\perp
	= \left( \bar\nabla_{\Psi_*\gamma'} \Psi_*\gamma' - \frac{\bar\nabla f}{f}  \right)^\perp.
	\end{align*}
Now, if $N\in T\mathbb{P}$ is unit normal to $\gamma$.  then $\Psi_* N$ is unit normal to $\gamma\times\R$ and the scalar mean curvature is
	\begin{align*}
	{\rm H}_{\gamma\times\R} &= g_c \left( \bar\nabla_{\Psi_*\gamma'} \Psi_*\gamma', \Psi_*N \right) - g_c\left( \frac{\bar\nabla f}{f}, \Psi_*N \right)  \\
	&= g_c(\nabla_{\gamma'} \gamma',N) - g_c\left( \frac{\nabla f}{f}, N \right). 
	\end{align*}
Therefore there is a correspondence between $f$-geodesics and minimal cylinders in $M\times\R$. 

\item From \eqref{f-geodesic equation} we see that a curve $\gamma$ in $\mathbb{P}$ is an $f$-geodesic in $\mathbb{P}$ if and only if $\gamma$ is an $f$-geodesic in $M\times\R$.

\item\label{f-Area} Let $\gamma$ be a curve in $\mathbb{P}$ and consider the Killing rectangle over $\gamma$, with height $h$, defined by $\gamma\times[0,h] \coloneqq \Psi(\gamma,[0,h])=\{\Psi(p,s)\in \mathbb{P}\times_\rho\R \colon p\in \gamma, \, s\in[0,h]\}$, where $h>0$. Then we have
\begin{equation*}
\operatorname{Area}[\gamma\times[0,h]]=\int_{0}^{1}\int_{0}^{h}f(\gamma(r))  \sqrt{h_c(\gamma'(r),\gamma'(r))} \, drdz = h {\rm L}_f[\gamma].
\end{equation*}
Note that the length of a segment $\{ \Psi((x,t),s) \colon s\in[0,h] \}$ of a flow line through the point $(x,t)\in\mathbb{P}$ is given by $h f(x,t)$.
\end{enumerate}
\end{Remark}

In order to guarantee the existence of (at least short) $f$-geodesics, we consider the following conformal change of the metric. Let $\sigma_c \coloneqq f^2 g_c = e^{2\log f} g_c$ and denote by $\tilde\nabla$ the Riemannian connection in $M\times\R$ with the metric $\sigma_c$. Since, under the conformal change, the connection changes by
\[
	\tilde{\nabla}_YX=\bar{\nabla}_YX+g_c\left(X,\frac{\bar{\nabla} f}{f}\right)Y+g_c\left(Y,\frac{\bar{\nabla} f}{f}\right)X-g_c\left(X,Y\right)\frac{\bar{\nabla} f}{f}
\]
we conclude from \eqref{f-geodesic equation} that $f$-geodesics are geodesics in $(M\times\R,\sigma_c)$.
In particular, we can use the classical theory about the existence of geodesics and exponential mapping, and we have the following two results that will be used later.

\begin{Proposition}\label{minimize charact.}
The $f$-geodesics are critical points of the $f$-length with respect to proper variations. Moreover, the $f$-geodesics are minimizers of the $f$-length.
\end{Proposition}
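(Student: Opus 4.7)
The plan is to reduce the statement to the classical theory of geodesics by means of the conformal change $\sigma_c = f^2 g_c$ introduced just above. The key observation is that the $f$-length is exactly arc length in $(M\times\R,\sigma_c)$: by \eqref{f-Length},
\[
{\rm L}_f[\gamma]=\int_0^1 f(\gamma(r))\sqrt{g_c(\gamma',\gamma')}\,dr=\int_0^1\sqrt{\sigma_c(\gamma',\gamma')}\,dr={\rm L}_{\sigma_c}[\gamma].
\]
So the variational problem for ${\rm L}_f$ on $(M\times\R,g_c)$ is literally the arc-length variational problem on the Riemannian manifold $(M\times\R,\sigma_c)$.

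For the first assertion I would apply the classical first variation formula for arc length to $(M\times\R,\sigma_c)$. After reparametrizing by a constant multiple of $\sigma_c$-arc length, it shows that $\gamma$ is a critical point of ${\rm L}_{\sigma_c}$ with respect to proper variations if and only if $\tilde\nabla_{\partial_r}\gamma'=0$, i.e. $\gamma$ is a $\sigma_c$-geodesic. Substituting $X=Y=\gamma'$ in the conformal change of connection
\[
\tilde{\nabla}_YX=\bar{\nabla}_YX+g_c\!\left(X,\tfrac{\bar{\nabla} f}{f}\right)Y+g_c\!\left(Y,\tfrac{\bar{\nabla} f}{f}\right)X-g_c(X,Y)\tfrac{\bar{\nabla} f}{f}
\]
recorded right before the statement, one sees that $\tilde\nabla_{\partial_r}\gamma'=0$ is equivalent to the $f$-geodesic equation \eqref{f-geodesic equation}. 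Since ${\rm L}_f={\rm L}_{\sigma_c}$, the critical points of ${\rm L}_f$ under proper variations are precisely the $f$-geodesics. The mild technical point to check is that a general (not constant-speed) $f$-geodesic remains a critical point of ${\rm L}_f$, which follows from the parametrization-invariance of the length functional.

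For the minimizing assertion I would invoke the Gauss lemma for $(M\times\R,\sigma_c)$. Around any point $p$, the Riemannian exponential map of $\sigma_c$ is a diffeomorphism from a star-shaped neighborhood of $0\in T_p(M\times\R)$ onto a normal ball $B_p$, and the Gauss lemma gives that radial $\sigma_c$-geodesics minimize $\sigma_c$-length among curves contained in $B_p$ sharing their endpoints. Via the identity ${\rm L}_f={\rm L}_{\sigma_c}$, this translates directly into the minimizing property of short $f$-geodesic segments.

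The main point to be wary of is that, as recorded in Remark \ref{Ilmanen's metric.}, the metric $\sigma_c$ need not be complete on $M\times\R$, so a global Hopf--Rinow argument is not available. Accordingly, the minimization in the proposition is understood in the local sense inside normal balls, which is all that the subsequent barrier constructions require; no global statement is needed.
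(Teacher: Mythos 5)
Your proposal is correct and is exactly the route the paper intends: the paper itself introduces $\sigma_c=f^2g_c$, observes that $f$-geodesics are $\sigma_c$-geodesics, and then simply invokes the classical theory of geodesics and the exponential map, which is precisely your reduction ${\rm L}_f={\rm L}_{\sigma_c}$ plus first variation and the Gauss lemma. Your remark that, due to the incompleteness of $\sigma_c$, the minimizing property is to be understood locally (inside normal balls) is the right reading and is all the later barrier arguments use.
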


\begin{Proposition}\label{totally f-geodesic nei.}
Given any point $p\in\mathbb{P}$, then there exists a neighbourhood $U\ni p$ satisfying the following property: Given any $q_1, q_2\in \overline{U}$ then there is a unique $f$-geodesic joining $q_1$ and $q_2$, and the interior of this $f$-geodesic lies in $U$.
\end{Proposition}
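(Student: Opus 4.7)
The plan is to reduce the proposition to the classical Whitehead theorem on the existence of totally normal (geodesically convex) neighborhoods, applied to the Riemannian manifold $(\mathbb{P}, \sigma_c)$ with $\sigma_c := f^2 h_c$. Since $\rho > 0$ and the exponential factor in $f = e^{ct/2}\rho$ is positive, the function $f$ is strictly positive on $\mathbb{P}$, so $\sigma_c$ is a bona fide smooth Riemannian metric on $\mathbb{P}$. By the computation preceding the statement, the $f$-geodesics of $(\mathbb{P}, h_c)$ coincide with the geodesics of $(\mathbb{P}, \sigma_c)$, so it suffices to exhibit around each $p \in \mathbb{P}$ a geodesically convex ball for $\sigma_c$.

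For the existence and uniqueness part, I would follow the standard route via the exponential map. Consider the smooth map
\[
F \colon \mathcal{V} \subset T\mathbb{P} \longrightarrow \mathbb{P} \times \mathbb{P}, \qquad F(q, v) = (q, \exp^{\sigma_c}_q(v)),
\]
defined on a neighborhood $\mathcal{V}$ of the zero section. Its differential at $(p, 0)$ is an isomorphism $T_p\mathbb{P}\oplus T_p\mathbb{P} \to T_p\mathbb{P}\oplus T_p\mathbb{P}$, so the inverse function theorem produces $\delta_0 > 0$ such that $F$ restricts to a diffeomorphism onto a neighborhood of $(p,p)$ containing $B^{\sigma_c}(p, \delta_0) \times B^{\sigma_c}(p, \delta_0)$. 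For any ordered pair in this product, one obtains a unique short $\sigma_c$-geodesic, equivalently a unique short $f$-geodesic, connecting the two points.

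The remaining—and central—point is the convexity of the neighborhood, which I expect to be the main technical obstacle. Here I would invoke the classical Whitehead argument. Choose $\sigma_c$-normal coordinates at $p$ and consider the squared distance function $r(x) := d_{\sigma_c}(p, x)^2$. Since $\mathrm{Hess}_{\sigma_c}\, r \big|_p = 2\,\sigma_c\big|_p$, continuity yields $\delta_1 \in (0, \delta_0)$ on which $\mathrm{Hess}_{\sigma_c}\, r$ is positive definite throughout $B^{\sigma_c}(p, \delta_1)$. Fix $\delta \in (0, \delta_1/2)$ and set $U := B^{\sigma_c}(p, \delta)$. Given $q_1, q_2 \in \overline{U}$, let $\gamma \colon [0,1] \to \mathbb{P}$ be the short geodesic joining them supplied by the previous step. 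As long as the image of $\gamma$ stays inside $B^{\sigma_c}(p, \delta_1)$, the composition $r \circ \gamma$ is strictly convex, so it attains its maximum only at the endpoints and satisfies $r(\gamma(t)) \le \max\{r(q_1), r(q_2)\} \le \delta^2$ for every $t\in[0,1]$. A standard continuity/connectedness argument on $t \mapsto d_{\sigma_c}(p, \gamma(t))$ then rules out any exit from $B^{\sigma_c}(p, \delta_1)$, and hence $\gamma((0,1)) \subset U$. This gives exactly the $U$ required, and no further structural use of the warping data is needed beyond the positivity of $f$ that allowed the conformal change $h_c \mapsto \sigma_c$ in the first place.
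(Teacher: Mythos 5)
Your reduction is exactly the paper's: the paper introduces the conformal metric $\sigma_c=f^2g_c$, observes that $f$-geodesics are precisely $\sigma_c$-geodesics, and then simply invokes the classical theory of the exponential map and totally (geodesically) convex neighbourhoods, which is the Whitehead argument you spell out. Your proposal is correct and essentially the same proof, only with the classical details (inverse function theorem for $(q,v)\mapsto(q,\exp_q v)$ and positivity of the Hessian of the squared distance) written out explicitly rather than cited.
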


\begin{Remark}
The neighbourhood given by Proposition \ref{totally f-geodesic nei.} will be called geodesically $f$-convex neighbourhood.
\end{Remark}

\section{Local Jenkins-Serrin theory}\label{Local J-S}

In this section we study the local existence of Jenkins-Serrin graphs and the behaviour of sequences of solutions of \eqref{soliton}.

\subsection{Local existence} We prove the existence of solutions over admissible domains, that are defined as follows.

\begin{Definition}[Admissible domain]
Let $\Omega \subset \mathbb{P}$ be a precompact domain. We say that $\Omega$ is an admissible domain if \(\partial\Omega\) is a union of $f$-geodesic arcs $A_1,\ldots,A_s,$ $B_1\ldots,B_r$, $f$-convex arcs $C_1,\ldots,C_t$, and the end points of these arcs and no two arcs $A_i$ and no two arcs $B_i$ have a common endpoint. Each of those arcs is called an {\rm edge} and  their common endpoints are called {\rm vertices}.
\end{Definition}

\begin{Definition}[Admissible polygon]
Let $\Omega$ be an admissible domain. We say that $\mathcal{P}$ is an admissible polygon if $\mathcal{P}\subset\overline{\Omega}$, the boundary of $\mathcal{P}$ is formed by edges of $\p\Omega$ and $f$-geodesic segments, and the vertices of $\mathcal{P}$ are chosen among the vertices of $\Omega.$
\end{Definition}

Suppose now that $\Omega \subset\mathbb{P}$ is an admissible domain with $\p\Omega = \cup_i J_i$, where the family $\{J_i\}\subset\p\Omega$ is a closed cover of $\p\Omega$ and satisfies  $J_i\cap J_{i+1}=\alpha_i$ for all $i\in\{1,v-1\}$, and $J_v\cap J_1=\alpha_v$, where $\{\alpha_i\}$ denotes the set of endpoints of the arcs $J_i$. Let $c=\{c_i \colon J_i\to\R\}$ be a family of bounded continuous functions. Consider the curve $\gamma_c \subset \partial\Omega\times\R = \Psi(\partial\Omega,\R)$ given by $\gamma_c(x)=\Psi(x,c_i(x))$ if $x\in \operatorname{int}J_i$ and $\gamma_c$ is a horizontal line joining $\Psi(\alpha_i,c_i(\alpha_i))$ and $\Psi(\alpha_i,c_{i+1}(\alpha_i))$ if $x=\alpha_i$. Then, as we will see, it is always possible to get a solution of \eqref{soliton} with boundary data $\gamma_c$ over a geodesically $f$-convex domain. Here boundary data $\gamma_c$ means that the solution equals to $c_i$ on $J_i$.

\begin{Theorem}[Local Existence]\label{Local Existence}
Let $\Omega$ be a geodesically $f$-convex domain which is also an admissible domain in $\mathbb{P}$ as above. Let $c=\{c_i \colon J_i\to\R\}$ be a family of bounded continuous functions and $\gamma_c$ the curve associated to $c$. Then there exists a unique solution of \eqref{soliton} with boundary data $\gamma_c$.
\end{Theorem}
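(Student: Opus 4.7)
The strategy is classical: approximate the (possibly discontinuous) boundary curve $\gamma_c$ by continuous data, solve the resulting quasi-linear Dirichlet problem via the continuity method, and extract a convergent subsequence. Since equation (\ref{soliton}) is formally the minimal surface equation for graphs in a warped product with warping function $f$, the standard machinery for mean-curvature-type operators applies on the precompact domain $\Omega$. For each $n$, I would introduce continuous boundary data $\tilde c^{(n)}\colon\partial\Omega\to\R$ that coincides with $c_i$ on compact sets $K_i^{(n)}\subset\operatorname{int}J_i$ exhausting $\operatorname{int}J_i$ as $n\to\infty$, and that interpolates monotonically between $c_i(\alpha_i)$ and $c_{i+1}(\alpha_i)$ across each vertex. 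The Dirichlet problem for (\ref{soliton}) with data $\tilde c^{(n)}$ then admits a unique solution $u_n\in C^{2,\alpha}(\Omega)\cap C^0(\overline\Omega)$ once the expected a priori estimates are secured.

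The three estimates needed are: a uniform $C^0$-bound from the maximum principle, giving $\|u_n\|_\infty\le\max_i\|c_i\|_\infty$; a boundary gradient estimate built from barriers; and an interior gradient estimate of Korevaar/Spruck type that upgrades the boundary bound to a global $C^1$-bound. The barriers are the crucial point and exploit Remark \ref{f-prop.}(1): every $f$-geodesic or $f$-convex arc of $\partial\Omega$ lifts to a minimal or mean-convex Killing cylinder in $(M\times\R,g_c)$, and Killing translations of these cylinders placed above and below $\operatorname{Graph}[u_n]$ furnish the local upper and lower barriers near each boundary point. The geodesic $f$-convexity of $\Omega$, together with Proposition \ref{totally f-geodesic nei.}, supplies a one-sided foliation by $f$-geodesics that makes this construction go through. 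With the three estimates in hand, Schauder theory yields uniform $C^{2,\alpha}$ bounds on compact subsets of $\Omega$, so a subsequence $u_n\to u$ converges in $C^2_{\operatorname{loc}}(\Omega)$ to a solution $u$ of (\ref{soliton}).

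To finish, I verify that the boundary of $\operatorname{Graph}[u]$ is $\gamma_c$. On each $\operatorname{int}J_i$ the boundary values of $u_n$ stabilize to $c_i$, so boundary regularity gives $u=c_i$ there. At each vertex $\alpha_i$, the uniform $C^0$-bound forces $\overline{\operatorname{Graph}[u]}$ to meet the flow line $\Psi(\alpha_i,\R)$ in a compact set containing the two heights $c_i(\alpha_i)$ and $c_{i+1}(\alpha_i)$; the monotone choice of $\tilde c^{(n)}$ together with connectedness of the graph identifies this set with precisely the horizontal segment joining the two heights. Uniqueness is the standard application of the comparison principle for mean-curvature-type operators: the difference of two solutions attains its extrema on $\partial\Omega$, vanishes on each open arc $J_i$, and the finitely many vertex points are removable.

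The principal obstacle is the barrier/boundary gradient estimate: because $(\mathbb{P},h_c)$ is incomplete (Remark \ref{Ilmanen's metric.}), global Scherk-type barriers are unavailable, and one is forced to build purely local barriers from $f$-geodesics, using precisely the geodesic $f$-convexity hypothesis of the theorem to guarantee that enough $f$-geodesics are defined on both sides of each point of $\partial\Omega$. Once this local barrier construction is carried out, the remainder of the argument is standard quasi-linear elliptic theory.
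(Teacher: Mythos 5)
Your proposal takes a genuinely different route from the paper's. The paper (following Pinheiro \cite{Pinheiro} and Nguyen \cite{Nguyen}) argues geometrically: by Remark \ref{f-prop.}(1) the solid Killing cylinder over $\Omega$ is mean convex in $(M\times\R,g_c)$, so the Plateau problem with boundary curve $\gamma_c$ is solvable (Morrey, Meeks--Yau), and the resulting minimal disk is then shown to be a Killing graph by a sweeping/maximum-principle argument against the minimal cylinders $\Psi(\gamma,\R)$ over $f$-geodesics; this handles the jump discontinuities of $\gamma_c$ at the vertices automatically. Your route instead approximates $\gamma_c$ by continuous data and solves the Dirichlet problem for \eqref{soliton} directly by the continuity method with $C^0$, boundary-gradient and interior-gradient estimates, in the spirit of the Killing-graph theory of \cite{DHL}. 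That buys a purely PDE argument with no geometric measure theory, at the price of actually establishing the Serrin-type boundary gradient estimate and a Korevaar-type interior estimate for the operator in the (incomplete) Ilmanen metric; note that the paper itself obtains its interior estimate (Proposition \ref{Interior gradient estimate}) by Schoen--Simon compactness rather than by a pointwise Korevaar computation.

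Two points in your sketch need repair before the argument closes. First, the barrier mechanism as you describe it cannot work: the Killing cylinders $\Psi(\gamma,\R)$ over boundary arcs are invariant under the flow of $\partial_s$, so ``Killing translations of these cylinders placed above and below $\operatorname{Graph}[u_n]$'' are the same cylinders and cannot bound the graph in the $s$-direction. The boundary gradient estimate must come from Serrin-type barriers of the form $c(x)\pm\psi(\dist(x,\partial\Omega))$ built from the distance to the boundary arcs, where the $f$-convexity of $\partial\Omega$ (equivalently, by Remark \ref{f-prop.}(1), the $g_c$-mean convexity of the boundary cylinder) is exactly what makes the construction admissible, as in \cite{DHL}; the cylinders enter only as the geometric expression of that convexity, not as upper/lower barriers. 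Second, your uniqueness step asserts that the finitely many vertices are ``removable''; since solutions with boundary data $\gamma_c$ are in general discontinuous at the vertices, this is precisely a maximum principle with a finite exceptional boundary set, which requires the Jenkins--Serrin cutoff/flux argument (this is Theorem \ref{Max. Principle} in the paper) and does not follow from the plain comparison principle. With these two repairs your outline is a viable alternative to the Plateau-based proof.
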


\begin{proof}
The proof is essentially the same as of Pinheiro \cite[Theorem 1.2]{Pinheiro} and Nguyen \cite[Theorem 3.10]{Nguyen}. Namely, by Remark \ref{f-prop.} (1) the domain bounded by the flow lines of $\partial_s$ passing through $\partial\Omega$ is mean convex and therefore we can solve the Plateau problem with boundary data $\gamma_c$. To show that the obtained solution is a graph over the domain $\Omega\subset\mathbb{P}$, we can use the same strategy as in \cite{Pinheiro} since, by Remark \ref{f-prop.} (1), the surface $\gamma\times\R$ is a minimal surface when $\gamma$ is an $f$-geodesic.
\end{proof}

\begin{Remark}
Theorem \ref{Local Existence} is not in contradiction with the non-existence result of F. Chini and N. M. M{\o}ller \cite[Proposition 30]{Chini-Moller} because the cylinder over the domain considered by them is not $f$-convex. 
\end{Remark}

%%%%%%%%%%%%%%%%%%%%%%%%%%%%%%%

\subsection{Interior gradient estimate}
We want to extend  Theorem \ref{Local Existence} for more general domains by using the Perron's method, and for this we will need to get a compactness theorem for solutions of \eqref{soliton}. This compactness result can be obtained from the following interior gradient estimate whose proof follows ideas that were used in \cite[Appendix A]{eichmair-metzger} and \cite[Theorem 12.1]{Hoffman}.

\begin{Proposition}[Interior gradient estimate]\label{Interior gradient estimate}
Let \(\{u_n\}\) be a sequence of solutions of \eqref{soliton} on a domain $\Omega\subset\mathbb{P}$. Let $p\in\Omega$ and $r>0$ be small enough so that the $g_0$-geodesic ball $B_{2r}(p) \subset\subset \Omega$.  Assume that $|u_n(q)|\leq K$ for all $n \in\n$ and $q\in B_{2r}(p)$. Then there exists a constant $c>0$ such that 
	\[
		\sup_{q\in B_{r}(p)}h_c(\nabla u_n(q),\nabla u_n(q))\leq c\ \operatorname{for\ all}\ n\in\n.
	\]
\end{Proposition}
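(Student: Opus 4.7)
The plan is to translate the gradient estimate into a lower bound for the angle function of the minimal graphs $\Sigma_n := \graph[u_n] \subset (M\times\R, g_c)$ and to obtain that bound by a maximum principle argument on $\Sigma_n$, in the spirit of \cite[Appendix A]{eichmair-metzger} and \cite[Theorem 12.1]{Hoffman}. First I would observe that since the metric $g_c = e^{ct}(\varphi^2 dx^2 + dt^2 + \rho^2 ds^2)$ has no $s$-dependence, the Killing field $\partial_s$ of $g_0$ remains Killing for $g_c$. Using the explicit formula $N_n = \partial_s/(fW_n) - f\nabla u_n/W_n$ for the unit normal, a direct computation gives $\theta_n := g_c(\partial_s, N_n) = f/W_n$. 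Because $|u_n|\le K$ on $B_{2r}(p)$, the conformal factor $f = e^{ct/2}\rho$ is pinched between positive constants on the compact region $\overline{B_{2r}(p)}\times[-K,K]$, so controlling $W_n$ from above on $B_r(p)$ is equivalent to bounding $\theta_n$ from below on $\Sigma_n$ over $\overline{B_r(p)}$.

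Next, since $\Sigma_n$ is minimal in $(M\times\R, g_c)$ and $\partial_s$ is Killing, the classical identity for the normal component of a Killing field on a minimal hypersurface yields
\[
\Delta_{\Sigma_n} \theta_n + \bigl(|A_n|^2 + \operatorname{Ric}_{g_c}(N_n, N_n)\bigr)\theta_n = 0,
\]
where $A_n$ is the second fundamental form of $\Sigma_n$ with respect to $g_c$. Setting $\psi_n := -\log\theta_n$, this rearranges to
\[
\Delta_{\Sigma_n}\psi_n = |A_n|^2 + \operatorname{Ric}_{g_c}(N_n, N_n) + |\nabla_{\Sigma_n}\psi_n|^2.
\]
I would then pick a non-negative cutoff $\chi$ on $M\times\R$ equal to $1$ on $\overline{B_r(p)}\times[-K,K]$ and supported in $B_{2r}(p)\times[-K-1,K+1]$. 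Its restriction to $\Sigma_n$ has compact support, so $\chi\psi_n$ attains its maximum on $\Sigma_n$ at some interior point. Imposing $\nabla_{\Sigma_n}(\chi\psi_n)=0$ and $\Delta_{\Sigma_n}(\chi\psi_n)\le 0$ there, inserting the equation for $\psi_n$, and absorbing the bad gradient term via Cauchy--Schwarz into derivatives of $\chi$, I expect a bound $\psi_n \le C$ on $\operatorname{supp}\chi$, with $C$ depending only on $r$, $K$, $c$, and the $g_c$-geometry of the compact cylinder $\overline{B_{2r}(p)}\times[-K-1,K+1]$. In particular $W_n \le C' f$ on $\overline{B_r(p)}$, which is the claim.

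The principal obstacle is the unfavorable sign of the $|\nabla_{\Sigma_n}\psi_n|^2$ term in the equation for $\psi_n$. Two essentially equivalent remedies appear in the references: either work with $\theta_n^{-p}$ for sufficiently small $p>0$, so that the curvature term $p|A_n|^2 \theta_n^{-p}$ dominates and a direct maximum-principle argument closes; or run a Moser iteration on $\Sigma_n$ using the Michael--Simon Sobolev inequality for minimal surfaces. A secondary subtlety is that $g_c$ fails to be complete (Remark \ref{Ilmanen's metric.}), but this is harmless because the hypothesis $|u_n|\le K$ confines $\Sigma_n$ to a compact subregion of $(M\times\R,g_c)$ on which all ambient geometric quantities are uniformly controlled.
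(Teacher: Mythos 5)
Your reduction is set up correctly: $\partial_s$ is indeed Killing for $g_c$, the angle function is $\theta_n=g_c(\partial_s,N_n)=f/W_n$, and on the minimal graph it satisfies the Jacobi equation $\Delta_{\Sigma_n}\theta_n+\left(|A_n|^2+\operatorname{Ric}_{g_c}(N_n,N_n)\right)\theta_n=0$. The gap is in the maximum-principle step, and it is not really the one you flag. Since $f$ is independent of $s$ and your cutoff $\chi$ is (effectively) constant in $s$ on the slab $|s|\le K$ containing the graphs, $\chi|_{\Sigma_n}$ is just a function of the base point, and $\psi_n=-\log\theta_n$ satisfies $\Delta_{\Sigma_n}\psi_n=|A_n|^2+\operatorname{Ric}_{g_c}(N_n,N_n)+|\nabla_{\Sigma_n}\psi_n|^2\ge -C_0$: it is \emph{subharmonic} up to a bounded term. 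So at an interior maximum of $\chi\psi_n$ lying in $\{\chi\equiv 1\}$ (exactly the region over $B_r(p)$ where you want the bound) one has $\nabla_{\Sigma_n}\psi_n=0$ and the inequality collapses to $0\ge -C_0-C_1\psi_n$, which is vacuous; more generally this structure only transfers control from the part of $\Sigma_n$ over the annulus near $\partial B_{2r}(p)$, where nothing is known. Note that the hypothesis $|u_n|\le K$ never enters your computation except to localize, whereas any interior gradient estimate must use it quantitatively (tilted planes over a fixed ball show the estimate is false without a height bound). The two remedies you mention do not repair this: $\theta_n^{-p}$ is again subharmonic, and the ``domination by $p|A_n|^2$'' trick belongs to stability-based curvature estimates (Schoen--Simon--Yau), where the stability inequality supplies the opposing term you lack; Moser iteration with Michael--Simon would need an integral bound on $\psi_n$ (this is the heart of the Bombieri--De Giorgi--Miranda argument, where the $L^1$ bound on $\log W$ is extracted from the height bound), and that step is missing. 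The standard direct fixes are precisely the missing idea: a Korevaar-type cutoff depending nontrivially on the $s$-coordinate inside the slab, whose $s$-derivative paired with $\theta_n$ produces the favorable term, or the full BDM scheme.

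For comparison, the paper proves the proposition by a completely different, non-pointwise route: assuming gradient blow-up at points $x_n\to x_\infty$ with $u_n(x_n)$ bounded, it regards the graphs as stable minimal surfaces with locally bounded area in the Killing cylinder, extracts a smooth limit $\Sigma_\infty$ by Schoen--Simon compactness, rules out vertical pieces by translating along $\partial_s$ and applying the maximum principle together with the confinement $|u_n|\le K$, and then uses an Alexandrov-type reflection across the planes $\Lambda_\beta$ to show every component of $\Sigma_\infty$ is a smooth graph; this contradicts the fact that the limit normal at $\Psi(x_\infty,\alpha)$ would be perpendicular to $\partial_s$. If you prefer a PDE proof, you should either implement Korevaar's cutoff in this weighted setting or follow the compactness argument above.
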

\begin{proof}
Suppose the contrary. Then, up to extracting a subsequence, we find a sequence $\{x_n\} \subset B_r(p)$ such that 
	\[
	h_c(\nabla u_n(x_n),\nabla u_n(x_n)) \to \infty
	\] 
as $n\to\infty$. Since $\overline{B}_r(p)$ is compact in $(M\times\R,g_0)$ (see Remark \ref{Ilmanen's metric.}) we may assume that $x_n\to x_\infty$ in $(M\times\R,g_c)$.  On the other hand $\{u_n(x_n)\}$ is a bounded sequence, so we may also assume $u_n(x_n)\to\alpha$ as $x_n \to x_\infty$.

Now let $\Sigma_n \coloneqq \{ \Psi(x, u_n(x)) \colon x\in B_{2r}(p) \}$ be the Killing graph of $u_n$ over the ball $B_{2r}(p)$.  
Then $\{\Sigma_n\}$ is a sequence of stable $g_c$-minimal surfaces with locally bounded area in $\{\Psi(x,\R) \colon x\in B_{2r}(p)\}$ and therefore, by \cite[Theorem 3]{Schoen-Simon}(see also N. Wickramasekera \cite[Theorem 18.1]{Wickramasekera}), we may assume, up to a subsequence, that $\Sigma_n\to\Sigma_\infty$, where $\Sigma_\infty$ is a smooth stable minimal surface in the cylinder $\{\Psi(x,\R) \colon x\in B_{2r}(p)\}$. Note that $\Sigma_\infty$  is not empty because $\Psi(x_\infty,\alpha)\in\Sigma_\infty$.

\vspace{12pt}

\noindent
Now we claim that any connected component of $\Sigma_\infty$ is a smooth graph.
On the contrary, suppose that there exists a connected component $S\subset \Sigma_\infty$ that is not a graph over a subset of $B_{2r}(p).$ Because each $\Sigma_n$ is a graph over $B_{2r}(p)$, and $\Sigma_n \to \Sigma_\infty$ smoothly, we obtain that any horizontal line $\Psi(q,\R)$, $q\in B_{2r}(p)$, intersects $S$ in a connected subset. Since we are assuming that $S$ is not a graph, there exists a horizontal line $\Psi(q,\R)$, $q\in B_{2r}(p)$, such that $\Psi(q,[a,a+\epsilon])\subset S.$ 

Let $S(\theta) \coloneqq \{ \Psi ((x,t),s+\theta) \colon \Psi((x,t),s)\in S \}$ be a translation of $S$ by $\theta$ in the direction of $\partial_s$ along the flow $\Psi$. Because $\Psi(q,[a,a+\epsilon]) \subset S$, the maximum principle implies that $S(\theta) = S$ for all $\theta \in (0,\epsilon)$ and it follows that $S$ is a cylinder $\Psi(S',\R)$, where $S'$ is a curve in $B_{2r}(p)$. But this is a contradiction since each $\Sigma_n \subset \{ \Psi(x,[-K,K]) \colon x\in B_{2r}(p) \}$. Therefore $S$ is a Killing graph of a continuous function $u_\infty$.

To conclude that $S$ is a graph of a smooth function, we will use a Rad\'o-Alexandrov type argument. For this, we denote by
	\[
	\Lambda_\beta \eqqcolon \{ \Psi((x,t),\beta) \colon (x,t)\in\mathbb{P},\, \beta \in\R  \}
	\]
a foliation of $M\times\R$ by vertical planes. Define
	\[
	S_+(\beta) \coloneqq \{ \Psi((x,t),s) \in S \colon s\le \beta \} \,\text{ and } \,
	S_-(\beta) \coloneqq \{ \Psi((x,t),s) \in S \colon s\ge \beta \}
	\]
to be the parts of $S$ that lie on different sides of $\Lambda_\beta$, and
	\[
	S_+^* (\beta) \coloneqq \{ \Psi((x,t), \beta - s) \colon \Psi((x,t),s) \in S_+  \}
	\]
the reflection of $S_+$ with respect to $\Lambda_\beta$. Since $S$ is a graph of a continuous function, $S_+^*(\beta)$ and $S_-(\beta)$ can intersect only along the boundary lying on the plane $\Lambda_\beta$.

Now assume that there exists a point $q = \Psi((x,t),u_\infty(x,t)) \in S$ so that the normal to $S$ at $q$ is perpendicular to $\partial_s$. Then reflecting with respect to the plane $\Lambda_{u_\infty(x,t)}$ through $q$, we obtain that $S_+^*(u_\infty(x,t))$ and $S_-(u_\infty(x,t))$ are intersecting along the plane $\Lambda_{u_\infty(x,t)}$, and they have a common tangent plane at $q$ so that locally they lie on different sides of this tangent plane. Now the maximum principle implies that $S_+^*(u_\infty(x,t)) = S_-(u_\infty(x,t))$ but this is a contradiction since $S$ was a graph. Therefore $S$ is a graph of a smooth function.

Finally, let $S$ be the connected component of $\Sigma_\infty$ containing $\Psi(x_\infty,\alpha).$ Then from the assumption $h_c(\nabla u_n(x_n),\nabla u_n(x_n)) \to \infty$, $n\to\infty$, it follows that the normal to $S$ at $\Psi(x_\infty,\alpha)$ is perpendicular to $\partial_s$. But this is a contradiction with $S$ being a graph of a smooth function over an open subset of $B_{2r}(p)$. Therefore, there exists a constant $c$ so that
	\[
		\sup_{q\in B_{r}(p)}h_c(\nabla u_n(q),\nabla u_n(q))\leq c\ \text{for all}\ n\in\n.
	\]
\end{proof}

\begin{Remark}
Although we wrote the proof of the previous theorem for dimension 2, it works until dimension 6 due to the regularity of stable minimal hypersurfaces.
\end{Remark}

Once we have the interior gradient estimate, using Arzel\`{a}-Ascoli theorem and the theory of elliptic PDEs, we obtain the following compactness theorem.

\begin{Proposition}[Compactness Theorem]\label{Compactness Theorem}
Let \(\{u_n\}\) be a sequence of solutions of \eqref{soliton} on a domain $\Omega\subset\mathbb{P}.$ Suppose that $\{u_n\}$ is locally bounded on compact subsets of $\Omega.$ Then there exists a subsequence of $\{u_n\}$ that converges smoothly on compact subsets of $\Omega$ to a solution $u$ of \eqref{soliton}.
\end{Proposition}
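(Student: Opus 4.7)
My plan is to combine the interior gradient estimate of Proposition \ref{Interior gradient estimate} with the classical regularity theory for quasilinear elliptic equations of divergence form, and then extract a smoothly convergent subsequence by a diagonal Arzel\`a--Ascoli argument.

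First, I would exhaust $\Omega$ by an increasing sequence of compact subsets $K_1 \subset K_2 \subset \cdots$ with $\bigcup_j K_j = \Omega$, chosen so that for each $j$ there is $r_j > 0$ and a finite cover of $K_j$ by $g_0$-geodesic balls $B_{r_j}(p_i)$ whose doubled balls $B_{2r_j}(p_i)$ are still compactly contained in $\Omega$. The hypothesis gives a uniform bound $|u_n| \le K_j$ on $\bigcup_i B_{2r_j}(p_i)$, so Proposition \ref{Interior gradient estimate} applied on each ball yields a constant $C_j > 0$ with
\[
\sup_{K_j} h_c(\nabla u_n, \nabla u_n) \le C_j \quad \text{for all } n \in \n.
\]

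With these uniform $C^0$ and $C^1$ bounds, equation \eqref{soliton} can be written, on $K_j$, as a linear divergence-form equation
\[
\Div_{\mathbb{P}}\bigl(a^{ij}(x,u_n,\nabla u_n)\,\p_j u_n\bigr) + b(x,u_n,\nabla u_n) = 0,
\]
which is uniformly elliptic with bounded measurable coefficients (the factor $f^2/W$ is bounded above and below, and the structure coefficients arising from differentiating $f^2 \nabla u/W$ in $\nabla u$ are controlled). The De~Giorgi--Nash--Moser / Ladyzhenskaya--Ural'tseva theory then gives a uniform $C^{1,\alpha}$ bound on any interior subdomain $K_j' \subset\subset K_j$. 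Freezing the coefficients then makes this a linear equation with H\"older continuous coefficients, and standard Schauder theory, followed by bootstrapping (differentiating the equation and iterating), upgrades this to uniform $C^{k,\alpha}$ bounds on $K_j'$ for every $k$.

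A diagonal subsequence extraction via Arzel\`a--Ascoli then produces $\{u_{n_k}\}$ converging in $C^k_{\rm loc}(\Omega)$ for every $k$, and hence smoothly, to a limit $u \in C^\infty(\Omega)$. Since each $u_n$ satisfies \eqref{soliton} and the convergence is smooth, passing to the limit in the equation shows that $u$ is again a solution. The only step that requires care beyond citation is verifying that the nonlinear structure of \eqref{soliton} genuinely fits the hypotheses of the quasilinear regularity theorems on the Riemannian manifold $(\mathbb{P}, h_c)$; this is routine once the gradient bound is in hand, since $W = \sqrt{1+f^2 h_c(\nabla u, \nabla u)}$ is then bounded above and below on each compact subdomain and $f$ is smooth and positive.
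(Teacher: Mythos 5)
Your proposal is correct and follows essentially the same route as the paper, which obtains the compactness statement precisely by combining the interior gradient estimate of Proposition \ref{Interior gradient estimate} with standard elliptic regularity theory and the Arzel\`a--Ascoli theorem. Your write-up simply fills in the standard details (De Giorgi--Nash--Moser, Schauder bootstrap, diagonal extraction) that the paper leaves implicit.
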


\subsection{Perron's method}As we noted earlier, the statement of Theorem \ref{Local Existence} is only local, so to construct translating solitons over more general domains we need to use the Perron's method.  A detailed presentation of this method can be found, among others, in \cite[Section 2.8]{Gilbarg-Thudinger}.

Given $u\in C^0(\Omega)$, we say that $u$ is a {\it subsolution} in $\Omega \subset \mathbb{P}$ if for all $A\subset\subset\Omega$ and every solution $v$ of \eqref{soliton} such that $u\leq v$ on $\partial A,$ we have $u\leq v$ in $A$. A {\it supersolution} is defined similarly but with opposite inequalities. It is standard to check the following properties.

\begin{enumerate}[(i)]
\item A function $u\in C^2(\Omega)$ is a subsolution (supersolution) if and only if 
	\[
	\Div_{\mathbb{P}} \left(f^2\frac{\nabla u}{W}\right) \geq0 \ \left(\Div_{\mathbb{P}} \left(f^2\frac{\nabla u}{W}\right)\leq0\right);
	\]

\item Suppose that $\Omega$ is a bounded domain. Let $u\in C^0(\Omega)$ be a subsolution and $v\in C^0(\Omega)$ be a supersolution such that $u\leq v$ on $\partial\Omega,$ then $u\leq v$ in $\Omega;$

\item Let $u$ be a subsolution in $\Omega$ and $A$ be a subset strictly contained in $\Omega.$ Assume that $v\in C^2(A)$ is a solution of \eqref{soliton} with $v=u$ on $\partial A.$ Define a function $U\in C^0(\Omega)$ (called lifting of $u$ in A) given by
\[
	U(p)\coloneqq 
		\begin{cases}
		v(p), \quad p\in A \\
		u(p), \quad p \in\Omega\setminus A.
	\end{cases}
\]
Then $U$ is a subsolution in $\Omega$. Similar result holds also for supersolutions;

\item If $u_1,\ldots,u_r$ are subsolutions in $\Omega$, then $u:=\max\{u_1,\ldots,u_r\}$ is a subsolution in $\Omega;$

\item If $u_1,\ldots,u_r$ are supersolution in $\Omega$, then $u:=\min\{u_1,\ldots,u_r\}$ is a supersolution in $\Omega$.

\end{enumerate}

Suppose that $\Omega$ is a bounded domain and let $c$ be a bounded function on $\partial\Omega.$ We say that a function $u\in C^0(\Omega)$ is a subfunction (superfunction) relative to $c$ if $u$ is a subsolution (supersolution) in $\Omega$ and $u\leq c$ ($u\ge c$) on $\partial\Omega.$ Observe that by (iii) $\inf c$ is a subfunction relative to $c$ and $\sup c$  is a superfunction relative to $c$. Denote by $\mathcal{S}_c$ the set of all subfuctions relative to $c$. 

\begin{Theorem}[Perron's method]\label{Perron's method}
The function $u(p)\coloneqq \sup_{v\in\mathcal{S}_c} v(p)$ is a smooth solution of \eqref{soliton}. 
\end{Theorem}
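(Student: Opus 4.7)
The plan is to run the classical Perron argument adapted to this geometric setting, leaning on the three ingredients just established: local solvability on geodesically $f$-convex balls (Theorem \ref{Local Existence}), the interior compactness Proposition \ref{Compactness Theorem}, and the sub/supersolution calculus (i)--(v) above. First I would note that $\mathcal{S}_c$ is nonempty (it contains the constant $\inf c$) and, by (ii), uniformly bounded above by $\sup c$, so $u(p) := \sup_{v\in\mathcal{S}_c} v(p)$ is a well-defined bounded function on $\Omega$.

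Next, fix $p_0\in\Omega$ and, using Proposition \ref{totally f-geodesic nei.}, choose a geodesically $f$-convex ball $B\ni p_0$ with $\overline{B}\subset\Omega$. Pick $\{v_n\}\subset\mathcal{S}_c$ with $v_n(p_0)\to u(p_0)$; replacing each $v_n$ by $\max\{v_n,\inf c\}$ (still in $\mathcal{S}_c$ by (iv)) gives a uniform lower bound. Theorem \ref{Local Existence} solves \eqref{soliton} inside $B$ with boundary values $v_n|_{\partial B}$, and the resulting lifting $V_n$ remains a subfunction by (iii), so $V_n\leq u$ on $\Omega$ and $V_n(p_0)\geq v_n(p_0)$. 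The sequence $\{V_n\}$ is uniformly bounded on $B$, and Proposition \ref{Compactness Theorem} extracts a subsequence converging smoothly on compact subsets of $B$ to a solution $V$ of \eqref{soliton} in $B$ with $V\leq u$ and $V(p_0)=u(p_0)$.

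To promote this equality to $V\equiv u$ throughout $B$, I would fix an arbitrary $p_1\in B$, pick $\{w_n\}\subset\mathcal{S}_c$ with $w_n(p_1)\to u(p_1)$, replace each $w_n$ by $\max\{w_n,V_n\}$ (again in $\mathcal{S}_c$ by (iv)), lift in $B$ to obtain smooth solutions $\widetilde W_n$ and extract via Proposition \ref{Compactness Theorem} a smooth limit $W$ on $B$ satisfying $V\leq W\leq u$ and $W(p_1)=u(p_1)$. Then $W-V$ is a nonnegative smooth function that vanishes at the interior point $p_0$; linearising \eqref{soliton} along the segment between $V$ and $W$ produces a linear second order elliptic equation (with smooth coefficients coming from the warping factor $f^{2}$) solved by $W-V$, and the strong maximum principle forces $W\equiv V$ on $B$. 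Hence $u(p_1)=V(p_1)$, and since $p_1$ and $p_0$ were arbitrary, $u$ is a smooth solution of \eqref{soliton} on all of $\Omega$.

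The main delicate point I expect is the last one: verifying that the linearised operator governing $W-V$ genuinely falls under the scope of the strong maximum principle, i.e.\ that dividing by the Jacobian of the $\xi\mapsto f^{2}\xi/\sqrt{1+f^{2}|\xi|^{2}}$ gives bounded measurable coefficients with the correct ellipticity. Everything else is routine once the local existence and compactness pieces of Section \ref{Local J-S} are in place.
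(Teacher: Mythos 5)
Your proposal is correct and follows essentially the same route as the paper, which simply invokes the classical Perron scheme of \cite{Gilbarg-Thudinger,Nguyen} with the single adaptation you also use: liftings are taken over sufficiently small $f$-geodesic balls, which are geodesically $f$-convex (hence Theorem \ref{Local Existence} applies), and limits are extracted via Proposition \ref{Compactness Theorem}. The ``delicate point'' you flag at the end is not an issue: the strong comparison you need for $W-V$ is exactly the paper's Theorem \ref{Max. Principle} (or \cite[Theorem 10.1]{Gilbarg-Thudinger}, which the paper already uses), so no separate linearisation argument is required.
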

\begin{proof}
The proof follows the same strategy as in \cite{Gilbarg-Thudinger,Nguyen}. Here we can take the lifting of any subsolution over sufficiently small $f$-geodesic balls contained in $\Omega$ since these balls are $f$-convex, i.e., the scalar $f$-curvature of the $f$-geodesic sphere is non-negative.
\end{proof}

Suppose that $\Omega$ is a bounded admissible domain with $\p\Omega=\bigcup J_i$, where $J_i$'s are arcs on $\p\Omega$ so that  $J_i\cap J_k$ is either an endpoint of both arcs or is empty, and let  $c=\{c_i \colon J_i\to\R\}$ be a family of bounded continuous functions. Then, as in \cite{Nguyen}, we get that the solutions of \eqref{soliton} given by Theorem \ref{Perron's method} have the desired boundary values. 

\begin{Theorem}[Perron's method-boundary data]\label{Perron's method-boundary data}
Suppose that $u$ is the solution given by Theorem \ref{Perron's method}. Then $u$ satisfies $u=c_i$ on $\operatorname{int}J_i$. 
\end{Theorem}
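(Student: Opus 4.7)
The plan is to verify the boundary condition by constructing local upper and lower barriers at each point $p \in \operatorname{int} J_i$, then invoking the comparison principle from properties (ii) and (iv) of the Perron machinery. This is the standard Perron boundary-regularity strategy, adapted to our ambient geometry through Theorem \ref{Local Existence}.

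Fix $p \in \operatorname{int} J_i$ and $\epsilon > 0$. Using Proposition \ref{totally f-geodesic nei.}, I would pick a geodesically $f$-convex neighbourhood $U$ of $p$, small enough that $\overline{U}$ meets no other edge $J_k$, $k \ne i$, and $|c_i(x) - c_i(p)| < \epsilon$ for all $x \in \overline{U} \cap J_i$. Choose points $q_1, q_2 \in J_i \cap U$ on either side of $p$ and join them inside $\Omega \cap U$ by an $f$-geodesic arc $\alpha$, whose existence and uniqueness are guaranteed by Proposition \ref{totally f-geodesic nei.}, so that together with the subarc $\beta \subset J_i$ from $q_1$ to $q_2$ through $p$, the enclosed region $\Omega' \subset U \cap \Omega$ is a geodesically $f$-convex admissible domain with boundary $\partial\Omega' = \beta \cup \alpha$ and vertices $q_1, q_2$.

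Next, apply Theorem \ref{Local Existence} to $\Omega'$ with boundary data $c_i + \epsilon$ on $\beta$ and a constant $M > \sup_{\partial \Omega} |c|$ on $\alpha$, joined by horizontal segments at the vertices $q_1, q_2$ as prescribed there. This produces a solution $\overline{w}$ of \eqref{soliton} on $\Omega'$. For any subfunction $v \in \mathcal{S}_c$ one has $v \le c_i \le c_i + \epsilon$ on $\beta$ and $v \le \sup|c| < M$ on $\alpha$, so property (ii) gives $v \leq \overline{w}$ throughout $\Omega'$. Hence $u \leq \overline{w}$ on $\Omega'$, and since $\overline{w}$ is continuous up to $\beta$ with $\overline{w}|_\beta = c_i + \epsilon$, we deduce $\limsup_{x \to p} u(x) \leq c_i(p) + 2\epsilon$. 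A symmetric construction produces a subfunction $\widetilde{w}$ on $\Omega$ given by a solution $\underline{w}$ on $\Omega'$ with boundary data $c_i - \epsilon$ on $\beta$ and $-M$ on $\alpha$, extended by the constant $-M$ on $\Omega \setminus \Omega'$; the extended function is a subfunction by properties (i) and (iv), and provides $\liminf_{x \to p} u(x) \geq c_i(p) - 2\epsilon$. Letting $\epsilon \to 0$ yields $u(p) = c_i(p)$.

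The principal obstacle is guaranteeing that the auxiliary subdomain $\Omega'$ is simultaneously geodesically $f$-convex and admissible, so that Theorem \ref{Local Existence} really applies. Since the edges of $\partial\Omega$ meeting $\overline{U}$ are either $f$-geodesics or $f$-convex arcs, while $\alpha$ is a genuine $f$-geodesic supplied by Proposition \ref{totally f-geodesic nei.}, admissibility is immediate; the $f$-convexity of $\Omega'$ follows by shrinking $U$ to a sufficiently small $f$-geodesic ball around $p$. A secondary technical point is that the prescribed boundary values are discontinuous at the vertices $q_1$ and $q_2$, but this is precisely what the curve $\gamma_c$ in Theorem \ref{Local Existence} is built to handle via the horizontal-line interpolation, so the value of $\overline{w}$ and $\underline{w}$ at any interior point of $\beta$ remains controlled by the continuous datum $c_i \pm \epsilon$.
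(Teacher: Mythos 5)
Your barrier strategy is the standard one (and is in the spirit of the argument of \cite{Nguyen}, to which the paper simply appeals without giving a proof), but there is a genuine gap: the auxiliary domain $\Omega'$ degenerates exactly when the edge $J_i$ is an $f$-geodesic, and this is a case the theorem must cover. The admissible domain has $f$-geodesic edges $A_i$, $B_j$, and in all later applications (Proposition \ref{barrier}, Theorem \ref{Existence}) the Perron solution is prescribed finite data (e.g.\ the constant $n$) on those $f$-geodesic edges, with Theorem \ref{Perron's method-boundary data} invoked to guarantee that these values are attained. If $J_i$ is an $f$-geodesic (or if ${\rm k}_f\equiv 0$ on the subarc of an $f$-convex edge between $q_1$ and $q_2$), then by the uniqueness clause of Proposition \ref{totally f-geodesic nei.} the $f$-geodesic $\alpha$ joining $q_1,q_2\in J_i$ is the subarc $\beta$ itself, so $\Omega'$ has empty interior and no barrier is produced; your closing remark that admissibility is ``immediate'' because $\alpha$ is a genuine $f$-geodesic does not address this collapse. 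A natural repair is to abandon the chord at such points and take $\Omega'=B\cap\Omega$, where $B$ is a small $f$-geodesic ball centred at $p$ (for small radius its boundary sphere is strictly $f$-convex, as is used in the proof of Theorem \ref{Perron's method}); then $\partial\Omega'$ consists of $\beta\subset J_i$ and an $f$-convex arc of $\partial B$, the domain is admissible and geodesically $f$-convex for small radius, Theorem \ref{Local Existence} applies, and your comparison argument runs verbatim with data $c_i\pm\epsilon$ on $\beta$ and $\pm M$ on $\partial B\cap\Omega$.

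Two smaller points should also be made explicit. First, the inequality $v\le M$ on $\alpha$ (or on $\partial B\cap\Omega$) for every $v\in\mathcal{S}_c$ uses comparison with the constant supersolution $\sup c$ via property (ii); as written you assert it directly. Second, the glued function $\widetilde w$ (equal to $\underline w$ on $\Omega'$ and to $-M$ on $\Omega\setminus\Omega'$) is not literally covered by properties (i) and (iv), since $\underline w$ is defined only on $\Omega'$ and property (iii) requires $A\subset\subset\Omega$ with matching boundary values; one must check that $\widetilde w$ is continuous across the interface (true because $\underline w\to -M$ along the interior of that interface, the two vertices lying on $\partial\Omega$) and verify the subsolution property by comparing with solutions on small balls straddling the interface, using that $\underline w\ge -M$ and the comparison principle on the two pieces. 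With these repairs, and the half-ball barrier at geodesic (and flat) boundary points, the proof becomes the standard barrier argument that the paper delegates to \cite{Nguyen}.
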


\subsection{Maximum principle} Next we prove the following version of the maximum principle.

\begin{Theorem}[Maximum principle]\label{Max. Principle}
Let $\Omega\subset \mathbb{P}$ be a bounded admissible domain. Suppose that $u_1$ and $u_2$ are solutions of \eqref{soliton} such that 
	\[
	\liminf_{x\to\partial\Omega}(u_2(x)-u_1(x))\geq0
	\]
with possible exception of finite number of points $\{q_1,\ldots,q_r\} \eqqcolon E \subset\partial\Omega.$ Then $u_2\geq u_1$ in $\Omega$ with strict inequality unless $u_2=u_1.$
\end{Theorem}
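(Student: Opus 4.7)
The plan is to imitate the classical flux-integration argument (in the spirit of Collin--Rosenberg and Nelli--Rosenberg) adapted to the divergence structure of \eqref{soliton}. First, I would introduce the flux vector fields
$$X_i := f^2 \frac{\nabla u_i}{W_i}, \qquad W_i := \sqrt{1 + f^2 h_c(\nabla u_i,\nabla u_i)}, \qquad i=1,2,$$
each $h_c$-divergence-free in $\Omega$ by \eqref{soliton}. Two ingredients drive the argument: (a) the strict monotonicity
$$h_c(X_1 - X_2,\, \nabla u_1 - \nabla u_2) > 0 \quad \text{whenever } \nabla u_1 \neq \nabla u_2,$$
which follows from strict convexity of $p\mapsto \sqrt{1+f^2h_c(p,p)}$ on each tangent space; and (b) the pointwise bound $|X_i|_{h_c}\leq f$, hence $|X_1-X_2|_{h_c}\leq 2M$ with $M := \sup_{\overline{\Omega}} f < \infty$, where finiteness uses that $\overline{\Omega}$ is compact in the complete metric $h_0$ (cf.\ Remark \ref{Ilmanen's metric.}).

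Next I would argue by contradiction: suppose $w := u_1 - u_2 > 0$ somewhere, and by Sard's theorem choose a regular value $\epsilon \in (0,\sup_\Omega w)$ of $w$, so that $\{w=\epsilon\}$ is a smooth $1$-submanifold of finite length, and set $\Omega_\epsilon := \{w > \epsilon\}$. The boundary hypothesis forces $\overline{\Omega_\epsilon}\cap\partial\Omega \subset E$. For small $\delta>0$, excise $h_0$-balls $B_\delta(q_i)$ around each $q_i\in E$ to form $D_\delta := \Omega_\epsilon\setminus\bigcup_i \overline{B_\delta(q_i)}$, a precompact domain whose piecewise smooth boundary splits as $\partial D_\delta = C_\delta \cup L_\delta$ with $C_\delta\subset\{w=\epsilon\}$ and $L_\delta\subset \bigcup_i \partial B_\delta(q_i)$ of $h_c$-length $O(\delta)$. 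The divergence theorem applied to $X_1-X_2$ on $D_\delta$ yields
$$0 \;=\; \int_{C_\delta} h_c(X_1-X_2,\nu)\,ds \;+\; \int_{L_\delta} h_c(X_1-X_2,\nu)\,ds.$$
On $C_\delta$, the outward unit normal is $\nu = -\nabla w / |\nabla w|_{h_c}$, so by (a) the first integrand is $\leq 0$; by (b) and the length bound, the second integral is $O(\delta)$. Sending $\delta\to 0$, dominated convergence forces the (nonpositive) flux of $X_1-X_2$ across $\{w=\epsilon\}$ to vanish identically, hence $\nabla u_1 = \nabla u_2$ on $\{w=\epsilon\}$ by (a). This means $\nabla w = 0$ along $\{w=\epsilon\}$, contradicting the regularity of $\epsilon$. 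Therefore $u_2\geq u_1$ in $\Omega$. Strict inequality unless $u_2\equiv u_1$ then follows from the strong maximum principle applied either directly to the quasilinear elliptic operator in \eqref{soliton} or to the linear equation satisfied by $w$ along a convex combination of the gradients of $u_1$ and $u_2$.

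The main technical hurdle will be to handle the exceptional set $E$ cleanly: the whole argument hinges on having simultaneously the uniform $L^\infty$ bound $|X_1-X_2|_{h_c}\leq 2M$ and the $O(\delta)$ length of the excised arcs $L_\delta$. It is essential that the relevant compactness is measured in the complete product metric $h_0$ rather than in the (possibly incomplete) Ilmanen metric $h_c$, since this is what guarantees that the conformal factor $f$ remains bounded on $\overline\Omega$. With these two estimates in hand, the boundary contributions at the exceptional points vanish in the limit $\delta\to 0$ and only the level-set flux survives, closing the contradiction.
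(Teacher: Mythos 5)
Your proof is correct in substance and rests on the same two pillars as the paper's: the strict monotonicity of the flux map (your item (a), which is precisely the identity \eqref{Ineq. normal}, strict because $N_1=N_2$ forces $\nabla u_1=\nabla u_2$), and the vanishing, as the excised balls shrink, of the flux across small circles about the exceptional points (your item (b), which uses that $f$ is bounded on $\overline{\Omega}$ by compactness with respect to $g_0$). The implementation is genuinely different, though. The paper follows Spruck: it multiplies $X_1-X_2$ by the Lipschitz truncation $\varphi$ of $u_1-u_2$ between $\epsilon$ and $K$, integrates $\Div_{\mathbb{P}}\left[\varphi\,(X_1-X_2)\right]$ over $\Omega$ with the balls removed, deduces $\nabla u_1=\nabla u_2$ on all of $\{u_1>u_2\}$, and then excludes $u_1=u_2+c$ with $c>0$ via \cite[Theorem 10.1]{Gilbarg-Thudinger} together with the boundary hypothesis. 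You instead choose a Sard-regular value $\epsilon$ of $w=u_1-u_2$, apply the divergence theorem to $X_1-X_2$ directly on the super-level set with the balls excised, and contradict $\nabla w\neq 0$ on $\{w=\epsilon\}$; this dispenses with both the test function and the final constancy argument, at the cost of routine regular-value/transversality bookkeeping (choose $\delta$ so that $\{w=\epsilon\}$ meets the circles $\partial B_\delta(q_i)$ transversally, making $C_\delta$ a compact $1$-manifold with boundary). Two small repairs are needed: the full level set $\{w=\epsilon\}$ need not have finite length, since nothing controls $u_1-u_2$ near $E$ — but this is harmless, as only the compact pieces $C_\delta$ enter and the limit $\delta\to 0$ is justified by monotone (rather than dominated) convergence of the nonnegative integrand $-h_c(X_1-X_2,\nu)$; and one should note that $\partial\Omega_\epsilon\cap\Omega\neq\varnothing$ (because $\Omega$ is connected, $\Omega_\epsilon\neq\varnothing$, and $w<\epsilon$ near $\partial\Omega\setminus E\neq\varnothing$), so the limiting flux integral is taken over a set of positive length and its strict negativity yields the desired contradiction. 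With these adjustments your argument is complete and, if anything, a bit more direct than the paper's.
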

\begin{proof} The proof follows similar strategy as Spruck \cite[Section 1]{Spruck-Infinite}. We define a function $\varphi\colon\Omega\to\R$,
\[
\varphi =  
	\begin{cases}
		K-\epsilon, &\text{if }\ u_1-u_2\geq K; \\
		u_1-u_2 -\epsilon, &\text{if }\ \epsilon<u_1-u_2\leq K;  \\
		0, &\text{if }\ u_1-u_2\leq\epsilon,
	\end{cases}
\]
where $K,\epsilon>0$ are constants, $K$ large and $\epsilon$ small. Then $\varphi$ is piecewise smooth with bounded Lipschitz constant. Therefore, $\varphi$ is Lipschitz function with $0\leq\varphi\leq K$, $\nabla\varphi=\nabla u_1-\nabla u_2$ in the set $\{x\in \Omega \colon \epsilon < u_1(x) - u_2(x) < K\}$ and $\nabla\varphi=0$ almost everywhere in the complement of this set.

For each point $q_i\in E$,  let $B_\epsilon(q_i)$ be an open geodesic disk with center $q_i$ and radius $\epsilon$. Denote $\Omega_\epsilon = \Omega \setminus \cup_i B_\epsilon(q_i)$ and suppose that $\partial\Omega_\epsilon=\tau_\epsilon \cup \rho_\epsilon$, where $\rho_\epsilon = \cup_i (\partial B_\epsilon(q_i)\cap\Omega)$ and $\tau_\epsilon=\partial\Omega_{\epsilon}\cap\partial\Omega.$ Since \(\liminf(u_2-u_1)\geq0\) in $\partial\Omega\setminus E,$ we have $\varphi\equiv0$ in a neighbourhood of $\tau_\epsilon.$ Define 
\begin{equation}\label{Equation J}
J:=\int_{\rho_\epsilon}\varphi\left[ h_c\left(f^2\frac{\nabla u_1}{W_1},\nu\right)-h_c\left(f^2\frac{\nabla u_2}{W_2},\nu\right)\right],
\end{equation}
where $\nu$ is the unit outer normal to $\Omega_\epsilon$ and $W_i = \sqrt[]{1+f^2h_c(\nabla u_i,\nabla u_i)}.$ From \eqref{Equation J}, and $0\leq\varphi\leq K$, we obtain

\begin{equation}\label{Ineq. upper J}
J\leq 2K\sum_{i=1}^{r}{\rm L}_f[\partial B_\epsilon(q_i)].
\end{equation}
On the other hand, since $\varphi$ is a Lipschitz function, we have

\begin{align*}
\Div_{\mathbb{P}} \left[\varphi \left( f^2\frac{\nabla u_1}{W_1}-f^2\frac{\nabla u_2}{W_2} \right) \right] 
&=
 h_c\left(\nabla\varphi, f^2\frac{\nabla u_1}{W_1}-f^2\frac{\nabla u_2}{W_2}  \right) \\
&+ 
\varphi\left[ \Div_{\mathbb{P}} \left(f^2\frac{\nabla u_1}{W_1}\right)-\Div_{\mathbb{P}} \left(f^2\frac{\nabla u_2}{W_2}\right)\right],
\end{align*}
almost everywhere in $\Omega$. Therefore, by the divergence theorem, we obtain
\begin{align}\label{Ineq. Under J}
\nonumber J 
&= 
\int_{\Omega_\epsilon} \left[ h_c \left(\nabla\varphi, f^2\frac{\nabla u_1}{W_1}-f^2\frac{\nabla u_2}{W_2} \right)+\varphi\left(\Div_{\mathbb{P}} \left(f^2\frac{\nabla u_1}{W_1}\right)-\Div_{\mathbb{P}} \left(f^2\frac{\nabla u_2}{W_2}\right)\right) \right]  \\
&\geq 
\int_{\Omega_\epsilon} h_c\left(\nabla\varphi, f^2\frac{\nabla u_1}{W_1}-f^2\frac{\nabla u_2}{W_2} \right).
\end{align}
Now if $N_i = \frac{\partial_s}{fW_i}-f\frac{\nabla u_i}{W_i},$ then
\begin{align}\label{Ineq. normal}
\nonumber h_c\left(\nabla u_1-\nabla u_2, f^2\frac{\nabla u_1}{W_1}-f^2\frac{\nabla u_2}{W_2} \right) &= g_c\left( N_1-N_2,W_1N_1- W_2N_2\right) \\
\nonumber &= W_1-(W_1+W_2)g_c(N_1,N_2)+W_2 \\
&= \frac{1}{2}(W_1+W_2)g_c(N_1-N_2,N_1-N_2).
\end{align}

From \eqref{Ineq. upper J}, \eqref{Ineq. Under J} and \eqref{Ineq. normal} we get
\[
 2K\sum_{i=1}^{r} {\rm L}_f[\partial B_\epsilon(q_i)] \geq\frac{1}{2}\int_{\Omega_\epsilon\cap\{0<u_1-u_2<K\}}(W_1+W_2)g_c(N_1-N_2,N_1-N_2) \geq 0,
\]
and in particular, letting $\epsilon\to 0$ we arrive at
\[
\int_{\{0<u_1-u_2<K\}}(W_1+W_2)g_c(N_1-N_2,N_1-N_2)=0.
\]
Therefore $N_1=N_2$ in $\{x\in\Omega \colon 0<u_1-u_2<K\}$, and consequently also $\nabla u_1=\nabla u_2$ in the same set. As $K$ was arbitrary, we conclude that $\nabla u_1=\nabla u_2$ whenever $u_1 > u_2$. 

To finish the proof, assume that $\{0<u_1-u_2\}$ contains a connected component with non-empty interior. Then, by the previous argument, $u_1=u_2+c,$ where $c$ is a positive constant, and consequently by \cite[Theorem 10.1]{Gilbarg-Thudinger} we have $u_1=u_2+c$ in $\Omega.$ On the other hand, as \(\liminf(u_2-u_1)\geq0\) for any approach of $\partial\Omega\setminus E$, $c$ must be a non-positive constant, which is impossible, and therefore $u_2\ge u_1$.
\end{proof}

\subsection{Scherk's translator barrier}
Next we will construct a specific solution that looks like a part of the Scherk's surface. This will be used later as a barrier to get information about sequences of solutions of \eqref{soliton}. The proof of the following proposition follows similar ideas as in \cite{Nelli-Rosenberg}, \cite{Pinheiro} and \cite{Nguyen} but there are some differences so we will write it completely.

\begin{Proposition}[Scherk's surface]\label{Scherk}
Let $\Omega\subset\mathbb{P}$ be a geodesically $f$-convex and admissible domain whose boundary $\partial\Omega$ is a union of four $f$-geodesic arcs $A_1, A_2, C_1$ and  $C_2$ so that $A_1$ and $A_2$ do not have common endpoints. Assume also that 
\[
{\rm L}_f[A_1]+{\rm L}_f[A_2]<{\rm L}_f[C_1]+{\rm L}_f[C_2].
\]
Then, given any bounded continuous data $c_i \colon C_i\to\R$, there exists a solution $u$ of \eqref{soliton} such that $u = c_i$ on $C_i$ and $u\to\infty$ along $A_1\cup A_2.$
\end{Proposition}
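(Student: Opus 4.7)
The plan is to adapt the classical Jenkins--Serrin Scherk-barrier construction, carried out in \cite{Nelli-Rosenberg, Pinheiro, Nguyen}, to the $f$-weighted divergence equation \eqref{soliton}. First, for each $n\in\mathbb{N}$ I construct an approximating solution $u_n$ of \eqref{soliton} on $\Omega$ by prescribing bounded continuous boundary values $\varphi_n\colon\partial\Omega\to\R$ equal to $c_i$ on $C_i$, equal to the constant $n$ on $A_1\cup A_2$, and continuously interpolated across the four vertices. Since $\Omega$ is a geodesically $f$-convex admissible domain, Theorem \ref{Local Existence} applies and yields such a $u_n$. The maximum principle (Theorem \ref{Max. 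Principle}) gives $u_n\leq u_{n+1}$, so the pointwise limit $u(p):=\lim_{n\to\infty} u_n(p)\in(-\infty,+\infty]$ exists on $\Omega$.

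The heart of the proof is to show that $u<+\infty$ everywhere in $\Omega$. I will use a flux argument based on the divergence-free vector field $X_n:=f^2\nabla u_n/W_n$, which satisfies the pointwise bound $|X_n|_{h_c}\leq f$, with equality attained in the limit only when the unit normal to the graph becomes horizontal. Let $D:=\{p\in\Omega\colon u_n(p)\to+\infty\}$. Combining the Interior Gradient Estimate (Proposition \ref{Interior gradient estimate}) with the Compactness Theorem (Proposition \ref{Compactness Theorem}) and a standard Harnack-type dichotomy for monotone sequences of solutions shows that $\Omega\setminus D$ is open and $u_n\to u$ smoothly on compact subsets of $\Omega\setminus D$. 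A straightening argument, the $f$-weighted analogue of the classical Jenkins--Serrin lemma executed in \cite{Mazet-Rodriguez-Rosenberg, Nguyen}, identifies $\partial D\cap\Omega$ as a disjoint union of $f$-geodesic arcs whose endpoints lie among the vertices of $\Omega$. Applying the divergence theorem to $X_n$ on admissible polygons exhausting $D$ and $\Omega\setminus D$, and passing $n\to\infty$, the fluxes along pieces of $\partial D$ in $A_1\cup A_2$ and along interior $f$-geodesic components converge to the corresponding $f$-lengths, whereas fluxes across pieces of $C_1\cup C_2$ are strictly dominated by the associated $f$-lengths because $u_n=c_i$ stays bounded there. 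The resulting balance, combined with the strict hypothesis ${\rm L}_f[A_1]+{\rm L}_f[A_2]<{\rm L}_f[C_1]+{\rm L}_f[C_2]$, forces $D=\varnothing$. Hence $u_n\to u$ smoothly on compact subsets of $\Omega$, producing a solution $u$ of \eqref{soliton}.

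It remains to verify the boundary behaviour. Along each $C_i$ the data $c_i$ is independent of $n$ and bounded, so local upper and lower barriers built from $f$-geodesic calipers inside an $f$-convex neighbourhood given by Proposition \ref{totally f-geodesic nei.} sandwich the $u_n$ near $C_i$ uniformly in $n$; passing to the limit gives $u=c_i$ on the interior of $C_i$, in the spirit of Theorem \ref{Perron's method-boundary data}. For any interior point $p\in A_1\cup A_2$ and any sequence $p_k\to p$ in $\Omega$, the continuity of $u_n$ at $p$ with $u_n(p)=n$ yields $\liminf_{k\to\infty} u(p_k)\geq u_n(p)=n$ for every $n$, so $u(p_k)\to+\infty$, as required.

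The main obstacle I expect is the divergence-set analysis in the second paragraph: establishing that $\partial D\cap\Omega$ consists of $f$-geodesic arcs in the conformally-changed warped metric, and then tracking the weight $f$ carefully through every integration by parts so that the strict $f$-length inequality rules out $D\neq\varnothing$. The overall flow resembles the analogous step in \cite{Nguyen}, but each flux integrand must be weighted by $f$ and the corners where the four boundary arcs meet must be handled with small $f$-geodesic cutoffs to avoid losing the strictness in the limit.
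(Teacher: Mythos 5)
Your first and last steps (the monotone exhaustion $u_n$ with data $n$ on $A_1\cup A_2$ via Theorem \ref{Local Existence}, monotonicity from Theorem \ref{Max. Principle}, and the boundary behaviour at the end) match the paper. The genuine gap is in the central step, where you propose to rule out the divergence set $D$ by the flux argument. To run that argument on a component of $D$ with $\varnothing\neq D\subsetneq\Omega$ you need the structure theory of the divergence set: that $D$ has no isolated points, that $\p D\cap\Omega$ consists of interior $f$-geodesic chords whose endpoints are \emph{vertices} of $\Omega$, and that components of $D$ are admissible polygons. In this paper those facts are Proposition \ref{divergence} and Proposition \ref{Real struct.}, and their proofs (no isolated points, chords not ending at interior points of the convex arcs, etc.) are carried out precisely by comparison with the Scherk barrier of Proposition \ref{Scherk} and with Proposition \ref{barrier}, which is itself a consequence of Proposition \ref{Scherk}. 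The same is true in the sources you cite for the ``straightening argument'': there the divergence-line analysis is performed only after a Scherk-type barrier has been constructed by other means. So as written your argument is circular: the key structural input is downstream of the very statement you are proving. (The only case your flux computation handles without that input is $D=\Omega$, via Lemma \ref{Flux-2} applied to $\p\Omega$; the intermediate cases are exactly where the structure theory is indispensable, and the Straight Line Lemma \ref{Straight line} alone does not give that chord endpoints sit at vertices.)

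The paper closes this gap by a completely different, purely geometric mechanism, and this is what the strict inequality ${\rm L}_f[A_1]+{\rm L}_f[A_2]<{\rm L}_f[C_1]+{\rm L}_f[C_2]$ is actually used for: one shows that the Killing rectangles $D_i^h=C_i\times[0,h]$ are area-minimizing (via the weighted harmonicity of the height function and the co-area formula), observes that for $h$ large the piecewise annulus $\Omega\cup\Omega_h\cup(A_1\times[0,h])\cup(A_2\times[0,h])$ has strictly less area than $D_1^h\cup D_2^h$, and then invokes Morrey/Meeks--Yau to produce minimal annuli $\Theta_h$ spanning $\p D_1^h\cup\p D_2^h$. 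These annuli, as in Nelli--Rosenberg and Pinheiro, are upper barriers for the whole sequence $\{u_n\}$ and their projections exhaust $\Omega\cup C_1\cup C_2$, giving the local bounds that your flux argument was meant to supply; Proposition \ref{Compactness Theorem} then yields the limit solution, and the general bounded data $c_i$ is reduced to the case $c_i\equiv 0$ by $-K\le u_n\le v+K$. If you want to keep a PDE-flux route instead, you must first prove the divergence-set structure for this quadrilateral independently of any Scherk barrier, which is a substantial missing piece of your proposal.
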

\begin{proof} We divide the proof into two cases depending on the continuous boundary data $c_i$.

\textbf{Case $c_1=c_2\equiv0$.} Consider the sequence of curves \(\{\gamma_n\}\subset \Psi(\p\Omega,\R)\), where $\gamma_n(x)=\Psi(x,0)$ for all $x\in \operatorname{C}_1\cup \operatorname{C}_2,$ $\gamma_n(x)=\Psi(x,n)$ for all $x\in A_1\cup A_2$ and $\gamma_n$ is a horizontal segment joining the vertices  $\Psi(x,0)$ and $\Psi(x,n)$ when $x$ is a vertex of $\partial\Omega.$ By Theorem \ref{Local Existence} (or Theorem \ref{Perron's method}) there exists a solution $u_n\colon \Omega\to\R$ of \eqref{soliton} with the continuous curve $\gamma_n$ as boundary. Moreover, by Theorem \ref{Max. Principle} the sequence $\{u_n\}$ is monotone increasing. So we need to prove that $\{u_n\}$ is locally bounded on compact subsets of $\Omega$ and hence, by Theorem \ref{Compactness Theorem}, we can obtain  a subsequence of $\{u_n\}$ converging smoothly on compact subsets of $\Omega$ to a solution $u$ of \eqref{soliton} satisfying the required properties. 

In order to control the sequence on compact subsets of $\Omega$, we construct a minimal annulus, and for this, consider the minimal disk $D_i^h \coloneqq C_i\times[0,h]=\Psi(C_i,[0,h]),$ that is the Killing rectangle over $C_i$ with height $h$. Then $D_i^h$ is area-minimizing with respect to the area functional. Indeed, suppose that $\Sigma$ is any minimal disk with boundary $\partial D_i^h$. 

Recall that we are considering the metric $g_c$ in $M\times\R$, and so we equip $\Sigma$ with the Riemannian metric that is the restrictions of $g_c$ onto $\Sigma$. If we write $h_\Sigma = s|_\Sigma$ as the ``height function" of $\Sigma$, we see that
\begin{equation}\label{gradient}
\nabla h_\Sigma=  (\bar\nabla s)^\top =\frac{\partial_s}{f^2}-g_c\left(N_\Sigma,\frac{\partial_s}{f^2}\right)N_\Sigma.
\end{equation}
Taking the divergence we can conclude that
\begin{align*}
\nonumber \Delta^\Sigma h_\Sigma 
&= \Div_\Sigma(\nabla h_\Sigma)=\Div_\Sigma\left(\frac{\partial_s}{f^2}-g_c\left(N_\Sigma,\frac{\partial_s}{f^2}\right)N_\Sigma\right) \\
\nonumber 
&= \Div_\Sigma\left(\frac{\partial_s}{f^2}\right)-g_c\left(N_\Sigma,\frac{\partial_s}{f^2}\right)\Div_\Sigma\left(N_\Sigma\right)=g_c\left(\p_s,\nabla_\Sigma\left(\frac{1}{f^2}\right)\right) \\
\nonumber 
&= -2g_c\left(\nabla^\Sigma \log f, \nabla^\Sigma h_\Sigma\right),
\end{align*}
i.e., $\Delta^\Sigma h_\Sigma + 2g_c\left(\nabla^\Sigma \log f, \nabla^\Sigma h_\Sigma\right)=0$, and hence $h_\Sigma$ is harmonic with respect to the weighted Laplacian. Now, by the maximum principle, the maximum and the minimum of $h_\Sigma$ are attained at the boundary of $\Sigma.$ 
Therefore the co-area formula gives
\[{\rm Area}[\Sigma]=\int_\Sigma d\mu_\Sigma=\int^{h}_{0}\int_{h_{\Sigma}^{-1}(t)}\frac{1}{\sqrt[]{g_c(\nabla h_{\Sigma},\nabla h_{\Sigma})}}ds_tdt,
\]
where $d\mu_\Sigma$ denotes the Riemannian measure induced by $g_c$ in $\Sigma.$ From \eqref{gradient} one obtains $\frac{1}{f^2}\geq g_c(\nabla h_{\Sigma},\nabla h_{\Sigma}).$  Consequently by Proposition \ref{minimize charact.} and Remark \ref{f-prop.} \eqref{f-Area} we have

\begin{align*}
{\rm Area}[\Sigma]&\geq  \int^{h}_{0}\int_{h_{\Sigma}^{-1}(t)}fds_tdt=\int^{h}_{0}{\rm L}_f[h_{\Sigma}^{-1}(t)]dt \\
&\geq \int^{h}_{0}{\rm L}_f[C_i]dt={\rm Area}[C_i\times[0,h]]={\rm Area}[D_i^h]
\end{align*}
and $D_i^h$ is area-minimizing with respect to the area functional. 

Now, to construct the annulus, consider first the piecewise annulus 
	\[
	\mathcal{C}_h:=\Omega\cup\Omega_h\cup (A_1\times[0,h])\cup (A_2\times[0,h]),
	\] 
where $\Omega_h \coloneqq \{\Psi((x,t),h)\colon (x,t)\in\Omega\}$. As 
	\[
	{\rm Area}(\mathcal{C}_h)=2{\rm Area}(\Omega)+{\rm Area}[A_1\times[0,h]]+{\rm Area}[A_2\times[0,h]],
	\] 
it holds
\begin{align*}
{\rm Area}[\mathcal{C}_h] &- {\rm Area}[D^h_1]-{\rm Area}[D^h_2] \\
&= 2{\rm Area}[\Omega]+h({\rm L}_f[A_1]+{\rm L}_f[A_2]-{\rm L}_f[C_1]-{\rm L}_f[C_2]) \\
&<0,
\end{align*}
if $h\geq h_0$ for some $h_0$ large enough. Consequently by \cite[Theorem 3.1]{Morrey} or \cite[Theorem 1]{Meeks-Yau-1} (see also \cite{Meeks-Yau-2}) for each $h\geq h_0$ there is a minimal annulus $\Theta_h$ with boundary $\partial D_1^h$ and $\partial D_2^h.$

Finally, to conclude the proof, we need to observe that essentially the same argument of Nelli and Rosenberg \cite[Theorem 2]{Nelli-Rosenberg} and Pinheiro \cite[Theorem 1.4]{Pinheiro} allows us to prove that the family of annuli $\{\Theta_h\}_{h\geq h_0}$ is an upper barrier for $\{u_n\}$. This means that all horizontal lines over points of $\Omega$ intersect ${\rm Graph}[u_n]$ before intersecting $\Theta_h$, i.e., $\Theta_h$ is above ${\rm Graph}[u_n]$ for all $n$ and all $h\geq h_0$. Moreover, the horizontal projection of $\{\Theta_h\}_{h\geq h_0}$ over $\Omega\cup C_1\cup C_2$ is an exhaustion of $\Omega\cup C_1\cup C_2.$ Therefore $\{u_n\}$ is a locally bounded sequence on compact subsets of $\Omega\cup C_1\cup C_2,$ and this finishes the proof of this case.

\textbf{General case} (c is a bounded function). Suppose that $|c_i|\leq K$ and let $v\colon \Omega\cup C_1\cup C_2\to\R$ be the function of the first case. Let  $\{\gamma_n \subset \Psi(\p\Omega, \R)\}$ be the sequence of curves, where $\gamma_n(x)=\Psi(x,\min\{n,c_i(x)\})$ for all $x\in \operatorname{C}_i,$ $\gamma_n(x)=\Psi(x,n)$ for all $x\in A_1\cup A_2$ and $\gamma_n$ is a horizontal segment joining the vertices  $\Psi(x,0)$ and $\Psi(x,n)$ when $x$ is a vertex of $\partial\Omega.$ By Theorem \ref{Perron's method} there exists a solution $u_n \colon \Omega\to\R$ of \eqref{soliton} with continuous boundary curve $\gamma_n.$ Moreover, by Theorem \ref{Max. Principle} the sequence $\{u_n\}$ is monotone non-decreasing and $-K\leq u_n\leq v+K$ in $\Omega$. Hence by Theorem \ref{Compactness Theorem} we obtain that $\{u_n\}$ converges smoothly on compact subsets of $\Omega$ to a solution $u$ of \eqref{soliton} with the required properties.
\end{proof}

\begin{remark}
To control the boundary value of the limit graph $u$ we use barriers that ultimately yield boundary height and gradient estimates for minimal surface equation in Riemannian ambients. To a clear presentation of this procedure we refer the reader to  \cite[Theorem 6]{Klaser-Menezes} and references therein.
\end{remark}

As an application of this result we can prove the following.
\begin{Proposition}\label{barrier}
Let $\Omega \subset \mathbb{P}$ be a bounded domain such that $\p\Omega$ is a union of an $f$-geodesic arc $A$ and an $f$-convex arc $C$ with their endpoints. Assume there exists a geodesically $f$-convex domain $\Omega'\subset\mathbb{P}$ so that $\Omega\subset\Omega'$ and its boundary $\partial\Omega'$ is a union of four $f$-geodesic arcs $A_1, A_2, C_1$ and  $C_2$ so that $A_1$ and $A_2$ do not have common endpoints and $A\subset A_1$. Moreover assume that 
	\[
	{\rm L}_f[A_1]+{\rm L}_f[A_2]<{\rm L}_f[C_1]+{\rm L}_f[C_2].
	\]
Then, given any bounded continuous function $c \colon C\to\R$, there exists a solution of \eqref{soliton} in $\Omega$ such that $u\to\infty$ on $A$ and has the continuous boundary data $c$ on $C.$
\end{Proposition}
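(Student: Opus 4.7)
The plan is a standard Jenkins--Serrin exhaustion: solve a sequence of finite Dirichlet problems on $\Omega$ with boundary value $n$ on $A$ and $c$ on $C$, use the Scherk solution supplied by Proposition \ref{Scherk} on the larger domain $\Omega'$ as a global upper barrier, and pass to a monotone limit.

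\textbf{Global barrier and exhausting solutions.} First I would pick a constant $M \geq \sup_C c$ and apply Proposition \ref{Scherk} to $\Omega'$ with constant data $M$ on $C_1 \cup C_2$; this produces a solution $v$ of \eqref{soliton} on $\Omega'$ with $v \equiv M$ on $C_1 \cup C_2$ and $v \to +\infty$ along $A_1 \cup A_2$. Comparing $v$ with the constant solution $M$ through Theorem \ref{Max. Principle}, with the four vertices $C_i \cap A_j$ as exceptional set, forces $v \geq M$ on $\Omega'$. In particular $v|_\Omega$ is a solution of \eqref{soliton} on $\Omega$ with $v|_\Omega \geq c$ on $C$ and $v(q) \to +\infty$ as $q \to p$ for every $p \in \operatorname{int}(A) \subset A_1$. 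Next, $\Omega$ itself is a bounded admissible domain with $\partial\Omega = A \cup C$, so for each integer $n>\sup c$ Theorems \ref{Perron's method} and \ref{Perron's method-boundary data} give a Perron solution $u_n$ of \eqref{soliton} on $\Omega$ attaining, continuously on the interiors of $A$ and $C$, the boundary data $c$ on $\operatorname{int}(C)$ and $n$ on $\operatorname{int}(A)$ (completed by horizontal segments at the two vertices).

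\textbf{Monotone limit.} Repeated use of Theorem \ref{Max. Principle}, with the vertices as exceptional set, yields
\[
\inf c \;\leq\; u_n \;\leq\; u_{n+1} \;\leq\; v|_\Omega \qquad \text{on } \Omega,
\]
where the last inequality uses $u_n = c \leq M \leq v$ on $\operatorname{int}(C)$ and $\liminf(v-u_n) \geq 0$ on $\operatorname{int}(A)$ since $v\to+\infty$ there. On any compact $K \subset \Omega$, the function $v|_\Omega$ is bounded, so Proposition \ref{Compactness Theorem} delivers a smooth limit $u_n \nearrow u$ on compact subsets, with $u$ a solution of \eqref{soliton}. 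For the boundary value on $A$: given $p \in \operatorname{int}(A)$, the continuity of $u_n$ up to $p$ with $u_n(p) = n$ and the inequality $u \geq u_n$ in $\Omega$ give $\liminf_{q \to p} u(q) \geq n$ for every $n$, hence $u \to +\infty$ along $\operatorname{int}(A)$.

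\textbf{Main obstacle.} The delicate step is the continuous attainment of the boundary data on $\operatorname{int}(C)$ by the limit $u$, because the monotone convergence $u_n \nearrow u$ is a priori only interior, and the upper barrier $v|_\Omega$ sits above $c$ rather than at $c$ on $C$. My plan is to construct, at each $p \in \operatorname{int}(C)$, local upper and lower barriers independent of $n$, using the $f$-convexity of $C$: by Remark \ref{f-prop.}(1) the Killing cylinder over $C$ has non-negative mean curvature in $(M\times\R,g_c)$, and the tangent $f$-geodesic to $C$ at $p$ lies locally on one side of $C$; combining a suitably translated portion of the local Scherk solution from Proposition \ref{Scherk} with horizontal translates of the constant $c(p)$ produces sandwich barriers that pinch $u_n$ at $p$ uniformly in $n$. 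Once these uniform barriers are in place, $u$ extends continuously to $\operatorname{int}(C)$ with $u=c$, finishing the proof.
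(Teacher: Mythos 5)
Your proposal is correct and follows essentially the same route as the paper: Perron solutions $u_n$ with data $n$ on $A$ and (truncated) $c$ on $C$, monotonicity from Theorem \ref{Max. Principle}, the Proposition \ref{Scherk} solution on $\Omega'$ (shifted by $\sup_C c$, equivalently your constant data $M$) as a uniform upper barrier, and Proposition \ref{Compactness Theorem} to pass to the limit. The boundary-attainment step on $\operatorname{int}(C)$ that you single out as the main obstacle is precisely what the paper itself leaves to the remark following Proposition \ref{Scherk} (barrier-based boundary estimates as in Klaser--Menezes), and your planned local-barrier argument is the standard way to fill it; note the lower bound on $C$ already comes free from $u\geq u_n$ and the continuity of each $u_n$ there, so only the $n$-independent upper barrier is really needed.
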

\begin{proof}
Let $\{\gamma_n\} \subset \Psi(\p\Omega,\R)$ be a sequence of curves, where 
\[
\gamma_n(x)=\Psi(x,\min\{c(x),n\})
\]
for all $x\in C$, $\gamma_n(x)=\Psi(x,n)$ for all $x\in A$ and $\gamma_n$ is a horizontal segment joining the vertices  $\Psi(x,0)$ and $\Psi(x,n)$ when $x$ is a vertex of $\partial\Omega.$ By Theorem \ref{Perron's method} there exists a solution $u_n \colon \Omega\to\R$ of \eqref{soliton} with continuous boundary curve $\gamma_n.$ Moreover, by Theorem \ref{Max. Principle} the sequence $\{u_n\}$ is monotone and increasing. Now, if $v$ denotes the function over $\Omega'$ given by the previous result, then we must have 
	\[
	\inf_{C}c\leq u_n\leq \sup_{C}c+v\ \operatorname{in}\ \Omega
	\] by Theorem \ref{Max. Principle}. 
In particular, by Theorem \ref{Compactness Theorem}, $u_n$ converges on compact subsets of $\Omega$ to a solution $u$ of \eqref{soliton} with the required properties.
\end{proof}

\begin{Proposition}\label{Continuous data}
Let $\Omega \subset \mathbb{P}$ be a domain. Suppose that $\gamma$ is an $f$-convex arc in $\p\Omega$. Let $\{u_n\}$ be a sequence of solutions of \eqref{soliton} that converges uniformly to $u$ on compact subsets of $\Omega$. Suppose that $u_n \in C^0(\Omega\cup\gamma)$ and $u_{n}|_\gamma$ converge uniformly on compact subsets of $\gamma$ to a function $c \colon \gamma\to\R$ that is continuous or $c\equiv\pm\infty$. Then $u$ is continuous in $\Omega\cup\gamma$ and $u|_{\gamma}=c.$
\end{Proposition}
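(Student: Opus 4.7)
The plan is to adapt the standard Nelli--Rosenberg boundary continuity argument to the $f$-convex setting, using the Scherk-type barriers provided by Proposition~\ref{barrier}. Smooth convergence $u_n \to u$ on compact subsets of $\Omega$ already yields that $u$ is smooth (and in particular continuous) in $\Omega$, so the content of the statement is the boundary behavior at each interior point $p_0$ of the $f$-convex arc $\gamma$: one needs $\lim_{x\to p_0,\, x\in\Omega} u(x) = c(p_0)$, interpreted in the obvious way if $c \equiv \pm\infty$.

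First I treat the case where $c$ is finite and continuous at $p_0$. Fix $\epsilon > 0$. Because $\gamma$ is $f$-convex, a short subarc $\gamma_0 \subset \gamma$ containing $p_0$ on which $|c - c(p_0)| < \epsilon/3$ can be closed off inside $\Omega$ by a short $f$-geodesic arc $A$ joining the endpoints of $\gamma_0$, producing a small admissible ``cap'' $W \subset \Omega$ with $\partial W = \gamma_0 \cup A$. Next, I enlarge $W$ to a geodesically $f$-convex quadrilateral $\Omega' \subset \mathbb{P}$ whose boundary consists of four $f$-geodesic arcs $A_1, A_2, C_1, C_2$ with $A \subset A_1$, $A_1$ and $A_2$ sharing no common endpoint, and
\[
{\rm L}_f[A_1] + {\rm L}_f[A_2] < {\rm L}_f[C_1] + {\rm L}_f[C_2];
\]
this is achieved by taking $\Omega'$ long and thin, with $A_1$ as the short side containing $A$ and $C_1, C_2$ elongated enough to dominate in $f$-length. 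Applying Proposition~\ref{barrier} to $(W, \Omega')$ then produces, for every constant $K \in \R$, a solution $v^+_K$ of~\eqref{soliton} on $W$ that equals $K$ on $\gamma_0$ and blows up to $+\infty$ along $A$.

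Choosing $K = c(p_0) + 2\epsilon/3$, the uniform convergence $u_n|_\gamma \to c$ on the compact $\gamma_0$ gives $u_n \le K$ on $\gamma_0$ for all large $n$, while $u_n \to u$ uniformly on the compact $A \Subset \Omega$ together with the blow-up of $v^+_K$ along $A$ yields $\liminf_{x \to \partial W}(v^+_K - u_n)(x) \ge 0$ off the two corners of $W$. Theorem~\ref{Max. Principle} then gives $u_n \le v^+_K$ in $W$, and passing to the limit $u \le v^+_K$, so $\limsup_{x \to p_0} u(x) \le v^+_K(p_0) = c(p_0) + 2\epsilon/3$. A symmetric lower barrier produces the matching $\liminf$ bound, and $\epsilon \to 0$ closes the finite case. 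The case $c \equiv +\infty$ is handled by the lower construction alone: for any $M > 0$ build a lower barrier on $W$ with boundary value $M$ on $\gamma_0$, use the uniform divergence $u_n|_\gamma \to +\infty$ on $\gamma_0$ to apply Theorem~\ref{Max. Principle}, and conclude $\liminf_{x \to p_0} u(x) \ge M$; arbitrariness of $M$ gives $u(x) \to +\infty$, and $c \equiv -\infty$ is symmetric. The main technical obstacle is the geometric construction of the auxiliary quadrilateral $\Omega' \supset W$ satisfying the Scherk $f$-length inequality of Proposition~\ref{barrier}; this is where the $f$-convexity of $\gamma$ is essential, ensuring that the closing arc $A$ and the cap $W$ lie inside $\Omega$ and can be embedded in a suitable geodesically $f$-convex quadrilateral.
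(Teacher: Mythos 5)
Your treatment of the strictly convex case is essentially the paper's own argument: there, too, one takes a small subarc $\lambda\ni p$, joins its endpoints by an $f$-geodesic chord, and uses the barrier of Proposition~\ref{barrier} on the resulting cap together with Theorem~\ref{Max. Principle}. The genuine gap is the flat case. Since ``$f$-convex'' only means ${\rm k}_f[\gamma]\ge 0$, the arc $\gamma$ may be an $f$-geodesic in an entire neighbourhood of $p_0$; this is not a pathological corner case but exactly how the proposition is used in the proof of Theorem~\ref{Existence}, where it is applied to the geodesic sides $A_i$, $B_j$ with $c\equiv\pm\infty$ (and the $C_k$ are only weakly $f$-convex, so they may contain geodesic subarcs as well). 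In that situation your construction collapses: by Proposition~\ref{totally f-geodesic nei.} the $f$-geodesic joining the endpoints of a small geodesic subarc $\gamma_0$ is $\gamma_0$ itself, so there is no cap $W$ with $\partial W=\gamma_0\cup A$, and shrinking $\gamma_0$ does not help if $\gamma$ is geodesic near $p_0$. (Conversely, when $\gamma_0$ is convex but not entirely geodesic, the chord meets $\gamma_0$ only at its endpoints, so your cap argument is sound there.) The paper closes the flat case by a different construction: it builds a small quadrilateral $\Delta\subset\Omega$ having $\lambda$ and a fourth arc $\lambda'\subset\Omega$ as the two ``convex'' sides and two interior $f$-geodesics $A_1,A_2$ as the remaining sides, applies Proposition~\ref{Scherk} to obtain a barrier equal to $N$ on $\lambda$, equal to a finite constant on $\lambda'$ (available because the $u_n$ are uniformly bounded on the compact arc $\lambda'$), and diverging to $\mp\infty$ along $A_1\cup A_2$, and then compares via Theorem~\ref{Max. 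Principle}. You would need to add this branch, or some substitute for it, for your proof to cover the stated hypotheses.

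A minor inaccuracy, not fatal: the chord $A$ is not compactly contained in $\Omega$ (its endpoints lie on $\gamma\subset\partial\Omega$), so you cannot invoke uniform convergence of $u_n$ on $A$; but each $u_n$ is continuous on $\overline{W}\subset\Omega\cup\gamma$, hence bounded there, and since your barrier blows up along the open arc $A$, the comparison in Theorem~\ref{Max. Principle} still goes through with the two corners of $W$ as the finite exceptional set.
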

\begin{proof}
Given $p\in\gamma$, assume that $c(p)>M$, where $M$ is a fixed constant. Clearly, we only need to prove that there exists a neighbourhood $U$ of $p$ in $\Omega\cup\gamma$ so that $u>M$ in $U$ to conclude that $u$ is continuous at $p$. The same argument works if we want to prove the existence of this neighbourhood when $c(p)<M$. 

Fix a constant $N\in(M,c(p))$. Since $u_n|\gamma$ converge uniformly to $c$ on compact subsets of $\gamma$, there exists a subarc $\lambda\subset\gamma$ containing $p$ in its interior so that $u_n>N$ for all $n\geq n_0$ on $\lambda,$ for some $n_0$ large enough. Moreover, we can assume that $\lambda$ lies in a neighbourhood of $p$ which is geodesically $f$-convex by taking $\lambda$ small enough. Notice also that, if $\lambda$ is small enough, we have two cases to analyse:
		\begin{enumerate}%[1.]
			\item $\lambda$ is an $f$-geodesic;
			\item $\lambda$ is strictly $f$-convex, except possibly at $p$.
		\end{enumerate}
Suppose $\lambda$ is an $f$-geodesic, then we can construct an admissible domain $\Delta\subset\Omega$ with four edges $A_1,A_2,\lambda'$ and $\lambda$ so that $A_1$ and $A_2$ do not have common endpoints, and ${\rm L}_f[A_1]+{\rm L}_f[A_2]<{\rm L}_f[\lambda']+{\rm L}_f[\lambda]$. By Proposition \ref{Scherk} there exists a solution $v$ of \eqref{soliton} so that $v\to - \infty$ along $A_1\cup A_2$, $v=N$ on $\lambda$ and $v=M'$ on $\lambda'$, where $M'=\inf_{\lambda'}u_n>-\infty$, since $u_n$ converge on compact subsets to $u.$ Now by Theorem \ref{Max. Principle} we conclude $v<u$ in $\Delta$.\\

On the other hand, if $\lambda$ is strictly $f$-convex, except possibly at $p$, then by our hypothesis on $\lambda$ there exists an $f$-geodesic $\eta$ joining the endpoints of $\lambda.$ By Proposition \ref{barrier} there exists a solution $v$ of \eqref{soliton} so that $v\to - \infty$ along $\eta$ and $v=N$ on $\lambda.$ Again by Theorem \ref{Max. Principle} we must have  $v<u$ in $\Delta$, where $\Delta$ is a domain in $\Omega$ with boundary $\lambda\cup\eta.$ In particular, in both cases there exists a small neighbourhood $U$ of $p$ in $\Omega\cup\gamma$ so that $u\geq N>M$ in $U.$ 
\end{proof}

Notice that if we break a curve $\gamma$ into small parts and use Proposition \ref{barrier}, we can conclude the following.
\begin{Lemma}\label{Local Control}
Let $\Omega\subset \mathbb{P}$ be a bounded domain and $\gamma\subset\p\Omega$ be a strictly $f$-convex curve with respect to inner unit normal to $\partial\Omega$. Suppose that $\{u_n\}$ is a sequence of solutions of \eqref{soliton} in $\Omega$ such that $u_n\geq c$ $($respectively $u_n\leq c)$ on $\gamma$, where $c$ is a constant. Then given any compact subarc $\lambda \subset \gamma$ there exists a neighbourhood $U(\lambda)$ of $\lambda$ in $\overline{\Omega}$ and a constant $K(\lambda)>0$ such that $u_n\geq c-K(\lambda)$  $($respectively $u_n\leq c+K(\lambda))$ for all n in $U(\lambda)$.
\end{Lemma}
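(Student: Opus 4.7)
The plan is to localize near each point of $\lambda$ using the strict $f$-convexity of $\gamma$ together with Proposition \ref{barrier}, and then to patch the resulting pointwise bounds by compactness of $\lambda$. I will argue the lower-bound case ($u_n\ge c$ on $\gamma$); the upper-bound case is analogous by reversing signs.

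Fix $p\in\lambda$. By Proposition \ref{totally f-geodesic nei.} choose a geodesically $f$-convex neighbourhood $V_p$ of $p$, and pick a short subarc $\gamma_p\subset\gamma\cap V_p$ containing $p$ in its interior. Let $\eta_p$ be the unique $f$-geodesic in $V_p$ joining the endpoints of $\gamma_p$. Because $\gamma$ is strictly $f$-convex with respect to the inner normal, the open arc $\eta_p$ lies in $\Omega$ and, together with $\gamma_p$, bounds a thin admissible region $\Delta_p\subset\Omega$. For $\gamma_p$ sufficiently short, $\Delta_p$ can be enclosed in a geodesically $f$-convex admissible quadrilateral $\Omega'_p\subset V_p$ whose boundary is a union of four $f$-geodesic arcs $A_1,A_2,C_1,C_2$ with $\eta_p\subset A_1$, $A_1\cap A_2=\varnothing$, and satisfying
\begin{equation*}
{\rm L}_f[A_1]+{\rm L}_f[A_2]<{\rm L}_f[C_1]+{\rm L}_f[C_2].
\end{equation*}
This is achieved by taking $A_1,A_2$ as short $f$-geodesic arcs close to $\eta_p$ (of length comparable to ${\rm L}_f[\gamma_p]$) and $C_1,C_2$ as transverse $f$-geodesic segments of definite, much larger $f$-length; shrinking $\gamma_p$ if necessary we keep $\Omega'_p\subset V_p$.

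Apply Proposition \ref{barrier} to $\Delta_p\subset\Omega'_p$ with the constant boundary data $-c$ on $\gamma_p$, obtaining a solution $v_p$ of \eqref{soliton} with $v_p=-c$ on $\gamma_p$ and $v_p\to+\infty$ along $\eta_p$. The equation \eqref{soliton} is invariant under $u\mapsto -u$ (the coefficient $W$ is unchanged, while the vector field $f^2\nabla u/W$ merely changes sign), so $w_p\coloneqq -v_p$ is also a solution on $\Delta_p$ with $w_p=c$ on $\gamma_p$ and $w_p\to-\infty$ along $\eta_p$. For every $n$: on $\gamma_p$ we have $u_n\ge c=w_p$ by hypothesis; along $\eta_p\subset\Omega$, $u_n$ is finite (being smooth on the compact arc $\eta_p$) while $w_p\to-\infty$. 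Hence $\liminf_{x\to\partial\Delta_p}(u_n-w_p)\ge 0$, and Theorem \ref{Max. Principle} yields $u_n\ge w_p$ throughout $\Delta_p$ for all $n$.

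To finish, choose a relatively open neighbourhood $U_p$ of $p$ in $\overline{\Omega}$ contained in $\Delta_p\cup\gamma_p$ and bounded away from $\eta_p$. The fixed function $w_p$ is continuous on $\overline{U_p}$, so it is bounded below by some $c-K_p$ with $K_p>0$; therefore $u_n\ge c-K_p$ on $U_p$ uniformly in $n$. Covering the compact subarc $\lambda$ by finitely many such neighbourhoods $U_{p_1},\ldots,U_{p_m}$ and setting $U(\lambda)\coloneqq\bigcup_{i=1}^{m}U_{p_i}$ and $K(\lambda)\coloneqq\max_i K_{p_i}$ gives the required bound. The main technical obstacle is the construction of the geodesically $f$-convex quadrilateral $\Omega'_p$ subject to the strict length inequality demanded by Proposition \ref{barrier}; once this is in place, the argument reduces to a standard maximum-principle comparison plus a compactness covering of $\lambda$.
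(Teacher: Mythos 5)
Your proposal is correct and follows essentially the route the paper intends (the paper only sketches it as ``break $\gamma$ into small parts and use Proposition \ref{barrier}''): localize on short subarcs, use the $f$-geodesic chord and a Scherk-type barrier from Proposition \ref{barrier} (via the $u\mapsto-u$ symmetry of \eqref{soliton}), compare by Theorem \ref{Max. Principle}, and conclude by a finite covering of the compact subarc $\lambda$.
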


\subsection{Straight line lemma}
In this subsection we give a geometric proof of the classical straight line lemma. The ideas that we will develop are inspired by work of Eichmair and Metzger in \cite[Appendix B]{eichmair-metzger} (see also \cite[Theorem 12]{GHLM}). 
\begin{Lemma}[Straight line lemma]\label{Straight line}
Let $\Omega \subset \mathbb{P}$ be a domain such that $\gamma \subset \p\Omega$ is a smooth open arc and suppose that $u\colon\Omega \to \R$ is a solution of \eqref{soliton}. If $u (x)\to\pm\infty$ when $x\to\gamma$, then $\gamma$ is an $f$-geodesic.
\end{Lemma}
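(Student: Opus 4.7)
The strategy is a blow-down argument along the Killing field $\partial_s$: translate the graph $\Sigma={\rm Graph}[u]$ downwards in the $s$-direction and extract a subsequential limit whose support is the Killing cylinder $\gamma\times\R$. Since such a cylinder is $g_c$-minimal if and only if its profile curve is an $f$-geodesic by Remark \ref{f-prop.}(1), this forces $\gamma$ to be an $f$-geodesic. Without loss of generality assume $u\to+\infty$ along $\gamma$; the opposite case follows by replacing $u$ with $-u$, which is still a solution of \eqref{soliton}.

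Fix a compact subarc $\gamma_0\subset\gamma$ and an interval $[a,b]\subset\R$. For each $n\in\n$, let $\Sigma_n$ be the image of $\Sigma$ under the time-$(-n)$ flow of $\partial_s$; equivalently, $\Sigma_n={\rm Graph}[u-n]$. Since the components of $g_c$ are independent of $s$, the vector field $\partial_s$ is a $g_c$-Killing field, so each $\Sigma_n$ is a stable $g_c$-minimal horizontal graph over $\Omega$ (stability coming from the positive Jacobi field $g_c(N,\partial_s)=f/W>0$). In a bounded neighbourhood $U$ of $\gamma_0\times[a,b]$ in $M\times\R$, a co-area-type computation shows
\[
\operatorname{Area}_{g_c}(\Sigma_n\cap U)\longrightarrow (b-a)\,{\rm L}_f[\gamma_0]
\]
as $n\to\infty$, using that on the level curves $\{u=n+s\}$ approaching $\gamma_0$ the quantity $f|\nabla u|/W$ tends to $1$. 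The stable minimal surface compactness theorem cited in Proposition \ref{Interior gradient estimate} then furnishes a subsequential smooth limit $\Sigma_\infty$ in $U$, itself a $g_c$-minimal surface. Next I show $\gamma_0\times(a,b)\subset\Sigma_\infty$: for any $(p,s_0)$ in this set, the intermediate value theorem applied along a short curve in $\Omega$ ending at $p$ yields $p_n\in\Omega$ with $p_n\to p$ and $u(p_n)=n+s_0$, so $(p_n,s_0)\in\Sigma_n$ converges to $(p,s_0)\in\Sigma_\infty$. The support of $\Sigma_\infty$ is a smooth $2$-submanifold of $M\times\R$ containing the smooth $2$-submanifold $\gamma_0\times(a,b)$, and so coincides with it locally; hence $\gamma_0\times\R$ is $g_c$-minimal near $\gamma_0\times(a,b)$, and $\gamma_0$ is $f$-geodesic by Remark \ref{f-prop.}(1). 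Letting $\gamma_0$ exhaust $\gamma$ finishes the proof.

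The most delicate point will be the uniform area bound, or equivalently, verifying that $|\nabla u|\to\infty$ on the level sets $\{u=N\}$ as $N\to\infty$ (otherwise $W/(f|\nabla u|)$ would not approach $1$, and the limiting area could fail to match $(b-a)\,{\rm L}_f[\gamma_0]$). This gradient blow-up follows by contradiction from Proposition \ref{Interior gradient estimate}: if $|\nabla u|$ stayed bounded along a sequence of points $q_n$ on these level sets accumulating at some point $p_\infty\in\gamma$, applying the interior gradient estimate to the shifted solutions $u-u(q_n)$ would produce, via Proposition \ref{Compactness Theorem}, a bounded local limiting solution of \eqref{soliton} defined in a neighbourhood of $p_\infty$, contradicting $u\to+\infty$ there.
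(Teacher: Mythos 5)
Your overall strategy -- translate the graph down along $\partial_s$, extract a limit of the stable minimal graphs via the Schoen--Simon compactness theorem, show the limit contains a piece of the cylinder $\gamma\times\R$, and conclude via Remark \ref{f-prop.}(1) -- is exactly the strategy of the paper's proof. The genuine gap is in how you secure the uniform local area bounds that Schoen--Simon requires. You base them on the asymptotics ${\rm Area}_{g_c}(\Sigma_n\cap U)\to(b-a)\,{\rm L}_f[\gamma_0]$, which, through the coarea formula, needs both that $f|\nabla u|/W\to 1$ along the level sets approaching $\gamma$ and that the $f$-lengths of those level sets converge to (or at least are uniformly controlled by) ${\rm L}_f[\gamma_0]$. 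Neither is available at this stage: the first statement is essentially the content of Remark \ref{estimate of normal}, which the paper \emph{deduces from the proof of this very lemma}, and your contradiction argument for the gradient blow-up does not run. Proposition \ref{Interior gradient estimate} takes as hypothesis a uniform $C^0$ bound on a fixed ball $B_{2r}(p)\subset\subset\Omega$ and outputs a gradient bound; you are using it in the reverse direction, and around points $q_n$ accumulating at $p_\infty\in\gamma\subset\p\Omega$ there is no fixed interior ball on which to apply it, nor are the shifted functions $u-u(q_n)$ uniformly bounded on any such ball. Moreover, even if a bounded limit of $u-u(q_n)$ existed near $p_\infty$, this would not contradict $u\to+\infty$ on $\gamma$, because the shifts $u(q_n)\to+\infty$ absorb the divergence; so the claimed contradiction is not there.

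The repair is the one the paper uses, and it makes the exact area asymptotics (and the gradient blow-up) unnecessary: horizontal graphs of solutions of \eqref{soliton} are area-minimizing in the Killing cylinder over their domain (a calibration consequence of the divergence-form equation, in the spirit of the area-minimizing argument in Proposition \ref{Scherk}). Hence, if $S_i$ denotes the connected component of $\Sigma_i\cap B_\epsilon[(x,0)]$ through a marked point of the translated graph, comparison with a portion of the sphere gives ${\rm Area}[S_i]\le{\rm Area}[\partial B_\epsilon[(x,0)]]$, a bound independent of $i$. With this, Schoen--Simon applies; the limit is nonempty (your intermediate value theorem argument, or the paper's choice of translating by $u(x_i)$ with $x_i\to x$, both give this), and since $u-u(x_i)\to-\infty$ uniformly on compact subsets of $\Omega$, the limit lies in $\gamma\times\R$, so the cylinder is minimal there and ${\rm k}_f[\gamma]=0$. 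Your identification of the limit with the cylinder and the final step via Remark \ref{f-prop.}(1) are fine once the compactness step is justified in this way.
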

\begin{proof}
Fix $x\in\gamma$ and take a sequence $\{x_i\}\subset \Omega$ with $x_i\to x.$ Assume that $u(x_i)\to\infty$ and define the sequence of hypersurfaces $\{\Sigma_i={\rm Graph}[u-u(x_i)]\}$.

Fix a closed geodesic ball $B_\epsilon[(x,0)]$ around $(x,0)( \in \gamma\times\R)$ in $M\times\R$ so that $B_\epsilon[(x,0)]$ does not intersect the lines over the endpoints of $\gamma.$ Notice that each $\Sigma_i$ intersects $B_\epsilon[(x,0)]$ for $i$ sufficiently large, and therefore we can suppose that each $\Sigma_i$ intersects $B_\epsilon[(x,0)]$. For each $i$, let $S_i$ be the connected component of $\Sigma_i\cap B_\epsilon[(x,0)]$ so that $(x_i,0)\in S_i.$ Since
\begin{equation*}
{\rm Area}[S_i]\leq{\rm Area}[\partial B_\epsilon[(x,0)]]
\end{equation*}
and $S_i$ is stable for all $i$, then \cite[Theorem 3]{Schoen-Simon} implies that, up to extracting a subsequence, we may assume $S_i\to S_{\infty}$ in $\operatorname{int}B_\epsilon((x,0)).$ Since $(x,0)\in S_{\infty}\subset\gamma\times\R$, we obtain that ${\rm k}_f[\gamma](x)={\rm H}_{\gamma\times\R}(x,0)=0.$ Consequently we must have ${\rm k}_f[\gamma]\equiv 0$, so $\gamma$ is an $f$-geodesic.
\end{proof}
\begin{Remark}
The proof of Lemma \ref{Straight line} works until dimension at most 7 and the hypothesis of smoothness of $\gamma$ can be omitted, see \cite[Appendix B]{eichmair-metzger} for more details. 
\end{Remark}
\begin{Remark}\label{estimate of normal}
We point out that the proof of Lemma \ref{Straight line} gives actually a stronger property: If $u \colon \Omega \to \R$ is a solution of \eqref{soliton} and $\gamma\subset\p\Omega$ is an $f$-geodesic, then for every $\delta \in(0,1)$ and every compact arc $\lambda \subset \gamma$ there exists $\eta(\delta,\lambda)>0$ so that if $\dist(p,\lambda)<\eta$, then
\begin{equation*}
1\geq h_c\left(f\frac{\nabla u}{W},\nu\right)(p)\geq 1-\delta\ ,\ \text{if}\ u\to+\infty\ \text{along } \lambda
\end{equation*}
and
\begin{equation*}
-1\leq h_c\left(f\frac{\nabla u}{W},\nu\right)(p)\leq -1+\delta\ ,\ \text{if}\ u\to-\infty\ \text{along } \lambda. 
\end{equation*}
\end{Remark}

\subsection{Flux formula} Let $\Omega\subset\mathbb{P}$ be a domain such that $\partial\Omega$ is piecewise $C^1$ smooth. From \eqref{soliton}, with the divergence theorem, we conclude that
	\[
	\int_{\p\Omega} \frac{f^2}{W} h_c(\nabla u, \nu) = 0,
	\]
where $\nu$ is unit outer normal to $\p\Omega$. This motivates us to define a flux
	\begin{equation}\label{def-flux}
	{\rm F}_u[\gamma] = \int_\gamma \frac{f^2}{W} h_c(\nabla u, \nu).
	\end{equation}
Next we collect some standard properties of the flux formula. The proofs of the next lemmas can be found in \cite{Hauswirth-Rosenberg-Spruck,Jenkins-Serrin,Nelli-Rosenberg,Nguyen,Pinheiro,Spruck-Infinite,Younes}.

\begin{Lemma}\label{Flux-1}
Let $u$ be a solution of \eqref{soliton} in an admissible domain $\Omega$. Then one has
\begin{enumerate}[(i)]
\item For all piecewise smooth polygon $\mathcal{P}$(not necessary admissible) in $\Omega$ we have 
	\[
	{\rm F}_u[\p\mathcal{P}]=0;
	\]

\item for every curve $\gamma$ in $\Omega$ we have
	\[
	|{\rm F}_u[\gamma]| \le {\rm L}_f[\gamma];
	\]
\item if $\gamma \subset \p\Omega$ is an $f$-geodesic such that $u$ tends to $+\infty$ on $\gamma$, we have
	\[
	{\rm F}_u[\gamma] = {\rm L}_f[\gamma];
	\]

\item if $\gamma \subset \p\Omega$ is an $f$-geodesic such that $u$ tends to $-\infty$, we have
	\[
	{\rm F}_u[\gamma] = - {\rm L}_f[\gamma];
	\]
	
\item if $\gamma\subset \p\Omega$ is an $f$-convex curve , i.e., ${\rm k}_f[\gamma]\geq0$ along $\gamma$, such that $u$ is continuous and finite on $\gamma$, then
	\[
	\left|{\rm F}_u[\gamma]\right| < {\rm L}_f[\gamma].
	\]	
\end{enumerate}
\end{Lemma}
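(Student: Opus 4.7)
The plan is to treat the five parts in order, leaning on the basic observation that equation \eqref{soliton} says the vector field $X_u \coloneqq f^2\nabla u/W$ is divergence-free on $\Omega$. For (i), simply apply the divergence theorem to $X_u$ on the polygon $\mathcal{P}$; the piecewise-$C^1$ regularity of $\p\mathcal{P}$ is handled by the usual smooth approximation.

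For (ii), the pointwise inequality
\[
\left| \frac{f^2}{W} h_c(\nabla u, \nu) \right| \le f \cdot \frac{f\, |\nabla u|_{h_c}}{\sqrt{1+f^2|\nabla u|_{h_c}^2}} < f
\]
is immediate from Cauchy--Schwarz and the elementary bound $t/\sqrt{1+t^2}<1$. Integrating over $\gamma$ yields the claim.

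For (iii) and (iv), the flux along $\gamma\subset\p\Omega$ must be interpreted as a limit, since $u$ blows up on $\gamma$. For a compact subarc $\lambda\subset\gamma$ take inner parallel curves $\lambda_\varepsilon\subset\Omega$ at $h_c$-distance $\varepsilon$ from $\lambda$. By Remark \ref{estimate of normal}, since $\gamma$ is an $f$-geodesic and $u\to+\infty$ (resp.\ $-\infty$) along $\gamma$, we have
\[
h_c\!\left(\frac{f\,\nabla u}{W}, \nu\right)(p) \longrightarrow \pm 1 \quad \text{as } \dist(p,\lambda)\to 0
\]
uniformly on $\lambda_\varepsilon$. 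Since $\tfrac{f^2}{W}h_c(\nabla u,\nu)=f\cdot h_c(f\nabla u/W,\nu)$, the integrand along $\lambda_\varepsilon$ converges uniformly to $\pm f$. Apply (i) to the thin admissible region bounded by $\lambda$, $\lambda_\varepsilon$ and two short transverse segments (whose $f$-length tends to $0$ as $\varepsilon\to 0$, by part (ii)) to obtain $F_u[\lambda]=\lim_{\varepsilon\to 0} F_u[\lambda_\varepsilon]=\pm L_f[\lambda]$. Exhausting $\gamma$ by such $\lambda$'s gives (iii) and (iv).

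For (v), combine $f$-convexity of $\gamma$ with the finite, continuous boundary datum $u|_\gamma$: by the Scherk-type construction of Proposition \ref{Scherk} and the barrier of Proposition \ref{barrier}, one obtains upper and lower barriers for $u$ in a one-sided neighborhood of any compact subarc $\lambda\subset\gamma$, yielding in particular a uniform gradient bound on $|\nabla u|_{h_c}$ up to $\lambda$. Hence the strict pointwise inequality in (ii) becomes uniform, $\tfrac{f^2}{W}|h_c(\nabla u,\nu)|\le (1-\delta)f$ on $\lambda$ for some $\delta>0$, and integration gives the strict bound $|F_u[\gamma]|<L_f[\gamma]$. The main obstacle is precisely (v): the pointwise strict bound $\tfrac{f|\nabla u|}{W}<1$ does not of itself imply a strict bound for the integral, so $f$-convexity must be exploited to construct barriers ensuring that $|\nabla u|$ does not blow up along $\gamma$ (equivalently, ruling out via the maximum principle that ${\rm Graph}[u]$ becomes tangent to the mean-convex Killing cylinder $\gamma\times\R$).
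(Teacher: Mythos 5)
The paper itself does not prove Lemma \ref{Flux-1}; it refers to the standard references (Jenkins--Serrin, Hauswirth--Rosenberg--Spruck, Pinheiro, Nguyen, \dots), so your proposal must be measured against those standard arguments. Your parts (i)--(iv) are correct and are essentially those arguments: (i) is the divergence theorem for the divergence-free field $f^2\nabla u/W$; (ii) follows from the pointwise bound $f^2|h_c(\nabla u,\nu)|/W< f$; and (iii)--(iv) follow by interpreting the flux across an arc where $u=\pm\infty$ as a limit along interior parallel arcs, invoking Remark \ref{estimate of normal}, with the transverse contributions controlled by (ii).

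The genuine gap is in (v). Your argument hinges on the claim that the barriers of Propositions \ref{Scherk} and \ref{barrier} give a uniform bound on $|\nabla u|_{h_c}$ up to a compact subarc $\lambda\subset\gamma$, hence a uniform estimate $f|\nabla u|/W\le 1-\delta$ on $\lambda$. Those propositions only provide two-sided bounds on the height $u$, and height barriers do not control the gradient at the boundary; more fundamentally, $u$ is only assumed continuous on $\gamma$, and for merely continuous (non-Lipschitz) boundary data the gradient of a solution can genuinely blow up along the boundary arc, already for the Euclidean minimal surface equation over a convex arc, so no such uniform estimate exists in general. Indeed the whole point of (v) is to show that the weak normal trace of $f\nabla u/W$, which is bounded by $1$, cannot equal $\pm1$ a.e.\ on $\gamma$. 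The standard route, which your tools do support, is different: if $|{\rm F}_u[\gamma]|={\rm L}_f[\gamma]$, then since the trace is bounded by $f$ the equality ${\rm F}_u[\gamma']=\pm{\rm L}_f[\gamma']$ propagates to every subarc $\gamma'$; for a short subarc that is not an $f$-geodesic, $f$-convexity lets one cut off a domain $\Delta\subset\Omega$ by the $f$-geodesic chord $T$ joining its endpoints, and then (i), the strict pointwise bound along the interior chord, and the minimizing property of $f$-geodesics (Proposition \ref{minimize charact.}) give ${\rm L}_f[\gamma']=|{\rm F}_u[\gamma']|=|{\rm F}_u[T]|<{\rm L}_f[T]\le{\rm L}_f[\gamma']$, a contradiction; the remaining case, $\gamma'$ an $f$-geodesic, is precisely where continuity and finiteness of $u$ must enter through a comparison with a Scherk-type solution as in Jenkins--Serrin, since by (iii) the strict inequality is false without that hypothesis. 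Your closing parenthetical about tangency to the Killing cylinder and the maximum principle does not substitute for this: the graph meets $\gamma\times\R$ along its entire boundary curve and is not known to be $C^1$ up to $\gamma$, so no boundary-point maximum principle can be applied as stated.
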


\begin{Lemma}\label{Flux-2}
Let $\{u_n\}$ be a sequence of solutions of \eqref{soliton} on a domain $\Omega\subset\mathbb{P}$ so that $u_n$'s are continuous up to $\p\Omega.$ Consider an $f$-geodesic $\gamma\subset\p\Omega.$ Then
\begin{enumerate}[(i)]
\item if $\{u_n\}$ diverges uniformly to $+\infty$ on compact subsets of $\gamma$ while remaining uniformly bounded on compact subsets of $\Omega$, 
\[
\lim_{n\to\infty}{\rm F}_{u_n}[\gamma]={\rm L}_f[\gamma];
\]
\item if $\{u_n\}$ diverges uniformly to $-\infty$ on compact subsets of $\Omega$ while remaining uniformly bounded on compact subsets of $\gamma$, 
\[\lim_{n\to\infty}{\rm F}_{u_n}[\gamma]=-{\rm L}_f[\gamma].\]
\end{enumerate}
\end{Lemma}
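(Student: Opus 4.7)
My plan is to reduce both statements to a sequential, uniform version of the boundary normal estimate in Remark \ref{estimate of normal}. The integrand defining ${\rm F}_{u_n}[\gamma]$ can be written as $f\cdot h_c\bigl(f\nabla u_n/W_n,\,\nu\bigr)$ and is pointwise dominated by $f$. Hence for case (i) it suffices to prove that
\[
h_c\bigl(f\nabla u_n/W_n,\,\nu\bigr)\longrightarrow 1
\]
uniformly on a neighbourhood of every compact subarc $\lambda\subset\gamma$, and then pass from $\lambda$ to all of $\gamma$ by cutting off a piece of arbitrarily small $f$-length. The analogous statement with limit $-1$ handles case (ii) after replacing $u_n$ by $-u_n$, which is again a solution of \eqref{soliton} and changes ${\rm F}_{u_n}[\gamma]$ by a sign.

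\emph{Uniform normal estimate.} The key claim is: for every $\delta\in(0,1)$ and every compact arc $\lambda\subset\gamma$, there exist $n_0\in\n$ and $\eta>0$ such that
\[
h_c\bigl(f\nabla u_n/W_n,\,\nu\bigr)(p)\ge 1-\delta
\]
whenever $n\ge n_0$ and $\dist(p,\lambda)<\eta$. I would argue by contradiction following the template of Lemma \ref{Straight line}. Assume the conclusion fails: extract subsequences $n_k\to\infty$ and $x_k\to x_\infty\in\lambda$ violating the bound, set $c_k:=u_{n_k}(x_k)$, and form the vertical translates $\Sigma_k:={\rm Graph}[u_{n_k}-c_k]$, each containing $\Psi(x_k,0)$. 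Inside a small geodesic ball $B$ around $\Psi(x_\infty,0)$ the connected component of $\Sigma_k\cap B$ through $\Psi(x_k,0)$ is stable and $g_c$-minimal with area at most $\Area[\p B]$, so Schoen-Simon compactness yields a subsequential smooth limit $S_\infty$ passing through $\Psi(x_\infty,0)$. The hypotheses that $u_n$ is bounded on compacts of $\Omega$ and $u_n\to+\infty$ on compacts of $\gamma$ force $c_k\to+\infty$: otherwise, along a further subsequence one would obtain interior accumulation points $\Psi(y,t)\in S_\infty$ with $y\in\Omega$, contradicting either the interior bound on $u_n$ or the boundary blow-up. Hence $S_\infty$ is disjoint from interior fibres and lies in $\gamma\times\R$. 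Since $\gamma$ is an $f$-geodesic, the cylinder $\gamma\times\R$ is minimal (Remark \ref{f-prop.} (1)), and the strong maximum principle forces $S_\infty$ to be an open piece of it. Consequently the unit normal of $\Sigma_k$ at $\Psi(x_k,0)$ converges to $\pm\Psi_*\nu$; the sign is fixed to $+\Psi_*\nu$ because $u_n\to+\infty$ on $\gamma$ forces $h_c(\nabla u_n,\nu)>0$ near $\gamma$, contradicting the assumed inequality.

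\emph{Integration.} Given $\epsilon>0$, choose a compact subarc $\lambda\subset\gamma$ with ${\rm L}_f[\gamma\setminus\lambda]<\epsilon$ (available because $\gamma$ has finite $f$-length as an edge of an admissible domain). Uniform convergence of the integrand on $\lambda$ gives $\lim_n {\rm F}_{u_n}[\lambda]={\rm L}_f[\lambda]$, while Lemma \ref{Flux-1} (ii) supplies $|{\rm F}_{u_n}[\gamma\setminus\lambda]|\le {\rm L}_f[\gamma\setminus\lambda]<\epsilon$. Combining,
\[
\bigl|{\rm F}_{u_n}[\gamma]-{\rm L}_f[\gamma]\bigr|\le\bigl|{\rm F}_{u_n}[\lambda]-{\rm L}_f[\lambda]\bigr|+\bigl|{\rm F}_{u_n}[\gamma\setminus\lambda]\bigr|+{\rm L}_f[\gamma\setminus\lambda]<3\epsilon
\]
for $n$ large, which proves (i). Part (ii) follows from (i) applied to $-u_n$ and the identity ${\rm F}_{-u_n}[\gamma]=-{\rm F}_{u_n}[\gamma]$.

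The main obstacle is the uniform normal estimate: both verifying that the Schoen-Simon limit $S_\infty$ actually lies inside the cylinder $\gamma\times\R$ (which is exactly the requirement $c_k\to+\infty$) and pinning down the correct sign of the limiting normal are delicate, and they rely on adapting the cylindrical barrier argument from Lemma \ref{Straight line} to the sequence.
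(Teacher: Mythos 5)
Your proposal has two genuine gaps, the first of which is fatal to part (ii). The reduction of (ii) to (i) via $u_n\mapsto -u_n$ does not work, because the hypotheses of the two items are not exchanged by a sign flip: in (ii) the divergence takes place on compact subsets of $\Omega$ and the boundedness on compact subsets of $\gamma$, so $-u_n$ diverges to $+\infty$ in the interior while remaining bounded on $\gamma$, which is not the hypothesis of (i). This interior-divergence case is exactly the one the paper actually uses (e.g.\ in the proof of Proposition \ref{divergence} and in the flux computations of Theorem \ref{Existence}, where $u_n\to+\infty$ on a polygon of the divergence set while an edge stays bounded), and it is genuinely different from (i): there is no subsequential finite limit of $u_n$ in the interior, so neither your compactness argument (which translates the graphs by the values $u_{n_k}(x_k)$ and leans on the interior bound) nor the classical route through a limit solution applies verbatim; a separate argument is required. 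A sign check also shows (i) and (ii) are not mirror images: with $u_n\to-\infty$ in $\Omega$ and bounded on $\gamma$, the gradient near $\gamma$ points along $+\nu$, so the flux tends to $+{\rm L}_f[\gamma]$ rather than $-{\rm L}_f[\gamma]$; the version needed in the paper is ``divergence to $+\infty$ in the interior, bounded on the arc, flux $\to-{\rm L}_f[\gamma]$'', and your reduction produces neither version.

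For (i), the strategy (normal estimate near $\gamma$, then integrate and discard a subarc of small $f$-length) is reasonable and in the spirit of Remark \ref{estimate of normal}, but the proof of the key uniform estimate is missing at exactly the two points you flag. First, the claim that $c_k\to+\infty$, ``otherwise one would obtain interior accumulation points \ldots contradicting either the interior bound or the boundary blow-up'', is not an argument: if $c_k$ stays bounded, $S_\infty$ simply contains a bounded graphical sheet over interior points, which contradicts nothing; what one actually needs is that the component of $S_\infty$ through $\Psi(x_\infty,0)$ is a piece of the minimal cylinder $\gamma\times\R$ (one-sided position of the graphs plus the strong maximum principle, as in Lemma \ref{Straight line}), together with smooth graphical convergence there, and this should be argued independently of the behaviour of $c_k$. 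Second, the sign determination ``$u_n\to+\infty$ on $\gamma$ forces $h_c(\nabla u_n,\nu)>0$ near $\gamma$'' is asserted, not proved: the hypotheses control values, not derivatives, and in particular nothing controls the tangential derivative of $u_n|_\gamma$, so a pointwise bound $h_c(f\nabla u_n/W_n,\nu)\ge 1-\delta$ valid up to and including points of $\gamma$ is not justified (Remark \ref{estimate of normal} concerns a single solution that blows up along $\gamma$, evaluated at interior points near $\gamma$). Note finally that the paper does not prove this lemma but cites \cite{Jenkins-Serrin,Spruck-Infinite,Nelli-Rosenberg,Pinheiro,Nguyen,Hauswirth-Rosenberg-Spruck,Younes}; the standard proof of (i), consistent with the toolbox already in the paper, extracts a limit solution $u$ via Proposition \ref{Compactness Theorem}, shows $u\to+\infty$ on $\gamma$ using the Scherk barriers of Propositions \ref{Scherk} and \ref{barrier}, applies Lemma \ref{Flux-1}(iii) to $u$, and transfers the conclusion to $u_n$ by the divergence theorem on a thin strip between $\gamma$ and a nearby interior curve, the error being bounded by the $f$-length of the two short sides uniformly in $n$; this route needs no pointwise gradient information on $\gamma$ itself, and a genuinely different argument must then be supplied for (ii).
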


\subsection{Divergence and convergence sets}The next step is to know the structure of the divergence and convergence sets of a sequence of solutions of \eqref{soliton}. 

\begin{Proposition}[Convergence set]\label{convergence}
Let $\{u_n\}$ be an increasing $($respectively decreasing $)$ sequence of solutions of \eqref{soliton} over a domain $\Omega\subset\mathbb{P}.$ Then there exists an open set $\mathcal{C}\subset\Omega$, called the convergence set, such that $\{u_n\}$ converges on compact subsets of $\mathcal{C}$ to a solution of \eqref{soliton} and diverges uniformly to $+\infty$ $($respectively $-\infty)$ on compact subsets of $\mathcal{D} \coloneqq \Omega\setminus\mathcal{C}$. The set $\mathcal{D}$ will be called the divergence set of $\{u_n\}.$ Moreover, if $\{u_n\}$ is bounded at a point $p\in\Omega$, then the convergence set $\mathcal{C}$ is non-empty.
\end{Proposition}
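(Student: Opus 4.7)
The strategy is to take $\mathcal{C}$ to be the largest open subset of $\Omega$ on which the sequence stays locally uniformly bounded, and then to deduce all three assertions from the interior gradient estimate (Proposition \ref{Interior gradient estimate}) together with the stable-minimal-surface blow-up used in its proof. I work in the increasing case; the decreasing case is identical. Define
\[
\mathcal{C} \coloneqq \bigcup \bigl\{ U \subset \Omega \text{ open} : \sup_{n,\, x \in U} |u_n(x)| < \infty \bigr\},
\]
which is open by construction. On every compact $K \Subset \mathcal{C}$, Proposition \ref{Interior gradient estimate} supplies a uniform gradient bound, and Proposition \ref{Compactness Theorem} combined with the monotonicity of $\{u_n\}$ upgrades any convergent subsequence to convergence of the whole sequence to a smooth solution of \eqref{soliton}.

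The heart of the proof is a single \emph{Local Boundedness Claim}: if $\{u_n(p)\}_n$ is bounded for some $p \in \Omega$, then $p \in \mathcal{C}$. Granting this, the ``moreover'' statement is immediate. For the uniform divergence $u_n \to +\infty$ on compact subsets of $\mathcal{D} = \Omega \setminus \mathcal{C}$, I argue by contradiction: if some compact $K \subset \mathcal{D}$ admits $u_{n_k}(q_k) \leq M$ with $q_k \in K$ and $n_k \to \infty$, extract $q_k \to q_0 \in K$; for each fixed $m$, monotonicity gives $u_m(q_k) \leq u_{n_k}(q_k) \leq M$ for all $n_k \geq m$, and continuity of $u_m$ yields $u_m(q_0) \leq M$ in the limit. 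Hence $\{u_n(q_0)\}$ is bounded, so the Local Boundedness Claim places $q_0 \in \mathcal{C}$, contradicting $q_0 \in \mathcal{D}$.

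The hard step, and the one I expect to be the main obstacle, is the Local Boundedness Claim itself, because Proposition \ref{Interior gradient estimate} requires an $L^\infty$-bound on a whole ball, which is strictly stronger than control at a single point. My plan is to re-run the stable-minimal-surface blow-up from the proof of Proposition \ref{Interior gradient estimate}. Fix $B_{2r}(p_0) \Subset \Omega$ and note that monotonicity gives $u_n \geq u_1$ there. Assume $p_0 \notin \mathcal{C}$, so some subsequence satisfies $u_{n_k}(p_k) \to +\infty$ with $p_k \to p_0$. Since \eqref{soliton} is invariant under $u \mapsto u + \mathrm{const}$ (the coefficient $f(x,t)$ has no $s$-dependence), $v_k \coloneqq u_{n_k} - u_{n_k}(p_0)$ is again a solution, with $v_k(p_0) = 0$, $v_k(p_k) \to +\infty$, and a uniform lower bound $v_k \geq u_1 - L$ where $L$ bounds $\{u_n(p_0)\}$. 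The graphs $\tilde\Sigma_k \coloneqq \graph[v_k]$ are stable $g_c$-minimal surfaces through $(p_0, 0)$ with locally bounded area (from stability and the monotonicity formula). Choose an Ilmanen-geodesic neighbourhood $U$ of $(p_0, 0)$ whose $s$-extent at $p_0$ exceeds $L - u_1(p_0)$; by \cite[Theorem 3]{Schoen-Simon}, up to a subsequence $\tilde\Sigma_k \to \tilde\Sigma_\infty$ smoothly in $U$. The intermediate value theorem applied to $v_k$ on a path from $p_0$ to $p_k$ produces, for every $s > 0$, points $q_k \to p_0$ with $(q_k, s) \in \tilde\Sigma_k$, so the connected component $S$ of $\tilde\Sigma_\infty$ through $(p_0, 0)$ contains an upward vertical arc. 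The reflection step used in the proof of Proposition \ref{Interior gradient estimate} then forces $S$ to be a Killing cylinder $\gamma \times \R$, whose intersection with $U$ reaches $s$-values below $u_1(p_0) - L$, contradicting the uniform lower bound inherited by $\tilde\Sigma_\infty$. This contradiction establishes the Local Boundedness Claim and closes the proposition.
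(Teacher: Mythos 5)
Your argument is correct and follows essentially the same route as the paper: both localize at a point where the sequence is bounded, apply Schoen--Simon compactness to the stable minimal Killing graphs over a small neighbourhood, and use the vertical-segment/Killing-cylinder maximum-principle dichotomy with $u_1$ as a lower barrier to rule out blow-up, together with the standard monotonicity-plus-continuity argument for uniform divergence on compact subsets of $\mathcal{D}$. Two small corrections: the step that forces the component $S$ to be a Killing cylinder is the $\partial_s$-translation-plus-maximum-principle argument in the proof of Proposition \ref{Interior gradient estimate}, not the reflection (Rad\'o--Alexandrov) step, and the locally bounded area of the graphs comes from their being area-minimizing in the solid Killing cylinder (as used in Lemma \ref{Straight line}), not from stability and the monotonicity formula.
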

\begin{proof}
Suppose that $\{u_n\}$ is an increasing sequence. Given any point $p\in\mathcal{C}$ suppose, up to a subsequence, that $u_n(p)\to\alpha\in\R.$ Take $\epsilon$ small enough so that $\partial B_\epsilon(p)$ is a strictly $f$-convex curve, i.e., ${\rm k}_f[\partial B_\epsilon(p)]>0$, where $B_\epsilon(p)$ denotes the geodesic ball with center $p$ and radius $\epsilon$ in \(\mathbb{P}\). Consider the sequence of surfaces $\{\Sigma_n \coloneqq {\rm Graph}[u_n|_{B_\epsilon(p)}]\}$ in the solid Killing cylinder $B_\epsilon(p)\times\R.$ Since $\{\Sigma_n\}$ is a sequence of stable minimal surfaces with locally bounded area, then by \cite[Theorem 3]{Schoen-Simon}, up to a subsequence, we can suppose that $\{\Sigma_n\}$ converges smoothly to $\Sigma_\infty$ in $B_\epsilon(p)\times\R.$

As $u_1\leq u_n$ for all $n$, using an argument like in Proposition \ref{Interior gradient estimate}, one obtains that each connected component of $\Sigma_\infty$ is a smooth graph over an open subset of $B_\epsilon(p).$ On the other hand, as $\Psi(p,\alpha)\in\Sigma_\infty$ we can suppose that $\Psi(p,\alpha)\in\Sigma'\subset\Sigma_\infty$. Hence we can take $\delta$ small enough so that $u_n|_{B_{\delta}(p)}$ converges on compact subset to $u_\infty,$ where ${\rm Graph}[u_\infty|_{B_\delta(p)}]=\Sigma'\cap(B_\delta(p)\times\R).$ Therefore $B_\delta(p)\subset\mathcal{C}$ and this completes the proof that $\mathcal{C}$ is open and non-empty if there exists a point $p\in\Omega$ such that $\{u_n(p)\}$ is a bounded sequence. 

It is not hard to see that essentially the same proof works if we suppose that $\{u_n\}$ is a decreasing sequence.
\end{proof}

We can obtain the following properties of the structure of the divergence set.
Although the proof of these facts are rather standard, see for example \cite{Hauswirth-Rosenberg-Spruck,Jenkins-Serrin,Nelli-Rosenberg,Nguyen,Pinheiro,Spruck-Infinite,Younes}, we will prove them here for completeness.

\begin{Proposition}[Structure of the divergence set]\label{divergence}
Let $\Omega\subset\mathbb{P}$ be an admissible domain whose boundary is a union of $f$-convex arcs $C_i$. Let $\{u_n\}$ be either an increasing or a decreasing sequence of solutions of \eqref{soliton} over $\Omega$ such that for all open arcs $C_i$ the functions $u_n$ extend continuously to $C_i$ and either $u_n|_{C_i}$ converge uniformly to a continuous function or $+\infty$ or $-\infty,$ respectively. 
If $\mathcal{D}$ denotes the divergence set of $\{u_n\}$, then $\mathcal{D}$ satisfies the following properties.
\begin{enumerate}[(i)]
\item  $\p\mathcal{D}$ consists of a union of non-intersecting interior $f$-geodesics in $\Omega$, joining two points of $\p\Omega$, and arcs on $\p\Omega$. These arcs will be called chords. Moreover, a component of $\mathcal{D}$ cannot be an isolated point.

\item No two interior chords in $\p\mathcal{D}$ can have a common endpoint at a convex corner of $\mathcal{D}$.

\item A component of $\mathcal{D}$ cannot be an interior chord.

\item The endpoints of interior $f$-geodesic chords are among the vertices of $\p\Omega.$ 
\end{enumerate}
\end{Proposition}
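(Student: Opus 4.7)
The plan is to prove the four claims successively, using the Compactness Theorem (Proposition \ref{Compactness Theorem}), the Maximum Principle (Theorem \ref{Max. Principle}), the Straight Line Lemma (Lemma \ref{Straight line}), Local Control (Lemma \ref{Local Control}), and the flux properties (Lemmas \ref{Flux-1} and \ref{Flux-2}). Assume $\{u_n\}$ is increasing; the decreasing case is symmetric. Let $u_\infty\coloneqq \lim_n u_n$ on $\mathcal{C}$, which by Proposition \ref{Compactness Theorem} is a smooth solution of \eqref{soliton} on $\mathcal{C}$; by construction $u_\infty\to +\infty$ as one approaches $\partial\mathcal{D}\cap\Omega$ from $\mathcal{C}$.

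For (i), the Straight Line Lemma applied to $u_\infty$ shows the smooth part of $\partial\mathcal{D}\cap\Omega$ consists of $f$-geodesics; isolated points of $\mathcal{D}$ are excluded by continuity of each $u_n$ combined with local convergence of the sequence in a punctured neighbourhood. To rule out a crossing of two interior chords $\gamma_1,\gamma_2$ at a point $p\in\Omega$, observe that the crossing produces four sectors at $p$ which alternate between $\mathcal{C}$ and $\mathcal{D}$. Choose a $\mathcal{C}$-sector and build a small $f$-geodesic triangle $T$ with vertex $p$ and legs of length $\ell$ on $\gamma_1,\gamma_2$. Applying the flux formula to $u_\infty$ on $T$, using Lemma \ref{Flux-1} (iii) on each $\gamma_i\cap T$ (where $u_\infty\to+\infty$) and Lemma \ref{Flux-1} (ii) on the third side, yields $L_f[\gamma_1\cap T]+L_f[\gamma_2\cap T]\le L_f[\text{third}]$; this contradicts the strict triangle inequality for $f$-geodesics (which follows from Proposition \ref{minimize charact.}) unless $\gamma_1\cup\gamma_2$ is a single $f$-geodesic at $p$. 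An entirely analogous flux argument (placing the test triangle in $\mathcal{C}$ or in $\mathcal{D}$ according to whether the corner is concave or convex for $\mathcal{D}$) shows that an interior $f$-geodesic arc of $\partial\mathcal{D}$ cannot terminate at a point of $\Omega$, so each extends from $\partial\Omega$ to $\partial\Omega$.

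For (ii), work with $u_n$ on a small $f$-geodesic triangle $T\subset\mathcal{D}$ at the convex corner $q$, with legs of length $\ell$ on $\gamma_1,\gamma_2$ and $f$-geodesic third side. The identity $F_{u_n}[\partial T]=0$ is combined with Lemma \ref{Flux-2} (i), applied in the $\mathcal{C}$-side subdomain adjacent to each $\gamma_i$, to obtain $F_{u_n}[\gamma_i\cap T,\nu_{\mathrm{out}(T)}]\to -L_f[\gamma_i\cap T]$, while the third-side flux is controlled by Lemma \ref{Flux-1} (ii); this gives $L_f[\gamma_1\cap T]+L_f[\gamma_2\cap T]\le L_f[\text{third}]$, contradicting Proposition \ref{minimize charact.} unless $\gamma_1\cup\gamma_2$ is a single $f$-geodesic at $q$. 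For (iii), if the interior chord $\gamma$ were a whole component of $\mathcal{D}$, pick $p\in\gamma$ and a small geodesic disk $B_\epsilon(p)$ split by $\gamma$ into half-disks $B^\pm\subset\mathcal{C}$. Applying Lemma \ref{Flux-2} (i) on each $B^\pm$ together with $F_{u_n}[\partial B^\pm]=0$ yields in the limit $F_{u_\infty}[\partial B_\epsilon(p)\cap B^\pm,\nu_{\mathrm{out}}]=-L_f[\gamma\cap B_\epsilon(p)]$; summing and comparing with the identity $F_{u_n}[\partial B_\epsilon(p)]=0$, which passes to the limit via uniform convergence $u_n\to u_\infty$ on compact subsets of $\mathcal{C}$ away from $\gamma$ together with the pointwise bound $|(f^2/W)h_c(\nabla u,\nu)|\le f$ on small end-arcs near $\gamma$, produces $-2L_f[\gamma\cap B_\epsilon(p)]=0$, a contradiction.

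For (iv), suppose an interior chord $\gamma$ has endpoint $q$ in the interior of an edge $C_i$. The argument splits according to the limit behaviour of $u_n|_{C_i}$ near $q$ and the $f$-geometry of $C_i$ at $q$. If $u_n|_{C_i}$ converges uniformly to a continuous bounded function and $C_i$ is strictly $f$-convex at $q$, Lemma \ref{Local Control} uniformly bounds $\{u_n\}$ in a neighbourhood of $q$ in $\overline\Omega$, contradicting the divergence $u_n\to+\infty$ in the $\mathcal{D}$-sector at $q$. In the remaining sub-cases $C_i$ is $f$-geodesic at $q$ (either by hypothesis, or forced by Lemma \ref{Straight line} when $u_n|_{C_i}\to\pm\infty$): then $\gamma$ and $C_i$ are two distinct $f$-geodesics meeting at $q$ at a nonzero angle (by uniqueness of the $f$-exponential map), and the triangle-flux argument of (ii) applied to a small triangle at $q$ with one leg on $\gamma$ and the other on the appropriate side of $C_i$ (using Lemma \ref{Flux-1} (iii)/(iv) when $u_n|_{C_i}\to\pm\infty$, or the strict bound $|F_{u_n}[C_i]|<L_f[C_i]$ from Lemma \ref{Flux-1} (v) combined with Lemma \ref{Flux-2} (i) on the $\gamma$ side otherwise) produces a contradiction with Proposition \ref{minimize charact.}. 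The main obstacle is precisely this case analysis in (iv): sub-cases must be paired with the correct sector (convergence vs.\ divergence side) and the orientation conventions in the flux formula must be tracked carefully so that Lemmas \ref{Flux-1} and \ref{Flux-2} yield the strict $f$-geodesic triangle contradiction rather than a vacuous triangle inequality.
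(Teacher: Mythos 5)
Your parts (i)--(iii) follow essentially the paper's strategy (Straight Line Lemma for the smooth structure of $\p\mathcal{D}$, plus flux--triangle arguments), and your half-disk flux argument for (iii) is a genuinely nice alternative to the paper's Scherk-barrier quadrilateral: applying Lemma \ref{Flux-2}(i) from both sides of the chord forces the two one-sided fluxes, which are exact negatives of each other for the smooth $u_n$, to both converge to $+{\rm L}_f$, so ${\rm L}_f[\gamma\cap B_\epsilon(p)]=0$. Two steps in (i), however, are asserted rather than proved. Excluding isolated points ``by continuity of each $u_n$ combined with local convergence in a punctured neighbourhood'' is not an argument: continuity plus boundedness on a punctured neighbourhood does not prevent pointwise divergence at the puncture; you need either the comparison principle against constants on a small disk, or the paper's device of enclosing the point in a Scherk quadrilateral (Proposition \ref{Scherk}) and trapping $u_n$ between $\pm(M+v)$. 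The same Scherk argument is what the paper uses to show a chord cannot terminate at an interior point; your ``entirely analogous flux argument'' is left unspecified, and at a terminating endpoint there is only one arc of $\p\mathcal{D}$ available, so no triangle with two legs on $\p\mathcal{D}$ can be formed without further case analysis.

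The genuine gap is in (iv). First, your reduction ``$C_i$ is $f$-geodesic at $q$, forced by Lemma \ref{Straight line} when $u_n|_{C_i}\to\pm\infty$'' misapplies that lemma: it concerns a single solution whose values blow up at a boundary arc, not a sequence whose boundary data diverge; a strictly $f$-convex $C_i$ with $u_n|_{C_i}\to+\infty$ is perfectly possible (the limit solution simply stays finite near $C_i$), so the sub-case ``strictly convex at $q$, unbounded data'' is not covered by your case (a) and is not eliminated by your case (b). The paper treats it with Lemma \ref{Local Control} (or the barrier of Proposition \ref{barrier}): for every $c$ one eventually has $u_n\ge c-K$ on a fixed neighbourhood of $q$, so a neighbourhood of $q$ lies in $\mathcal{D}$, contradicting $\gamma\subset\p\mathcal{D}$. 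Second, in your geodesic sub-case with bounded data, the flux combination you propose (Lemma \ref{Flux-1}(v) on the $C_i$-leg, Lemma \ref{Flux-2}(i) on the $\gamma$-leg, Lemma \ref{Flux-1}(ii) on the third side) yields only ${\rm L}_f[a_1]\le {\rm L}_f[a_2]+{\rm L}_f[a_3]$, i.e.\ the ordinary triangle inequality with the single geodesic leg on the short side --- exactly the vacuous inequality you warn against. To get a contradiction with Proposition \ref{minimize charact.} you need \emph{two} sides of the triangle carrying flux $\pm{\rm L}_f$ with coherent signs, which is why the paper chooses the triangle with $a_1\subset\gamma$, $a_2\subset \overline{\mathcal{D}}\cap C_i$ and $a_3$ in $\mathcal{D}$ when the data are bounded (and $a_3$ in $\mathcal{C}$ when they are unbounded). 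Third, your dichotomy ``strictly convex at $q$'' versus ``geodesic at $q$'' omits the intermediate case ${\rm k}_f[C_i](q)=0$ with strictly convex points accumulating at $q$, where $C_i$ contains no geodesic subarc through $q$ and your triangle cannot have a leg on $C_i$ to which Lemma \ref{Flux-1}(iii)/(iv) applies; the paper disposes of this case with the barrier of Proposition \ref{barrier} rather than a flux argument.
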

\begin{proof}
Let us assume that $\{u_n\}$ is an increasing sequence. If $\mathcal{D}=\Omega$ there is nothing to prove, so we can suppose that $\mathcal{D}\neq\Omega$. Under this hypothesis, Lemma \ref{Straight line} implies that $\p\mathcal{D}$ consists of interior $f$-geodesics in $\Omega$ and arcs of $\p\Omega.$ We will prove initially that $\mathcal{D}$ cannot have isolated points. Indeed, if $p$ is an isolated point of $\mathcal{D}$, then we can construct a quadrilateral domain $\Omega'\subset\Omega$ satisfying the condition of Proposition \ref{Scherk} so that $p\in\operatorname{int}\Omega'$. Moreover, we can suppose that $\overline{\Omega'}$ does not intersect $\mathcal{D}\setminus\{p\}.$ Now consider $M=\sup_{C_1\cup C_2} |u_n|<\infty,$ where $C_1$ and $C_2$ denotes the edges of $\Omega'$ with greater total $f$-length. If $v$ denotes the function given by Proposition \ref{Scherk}, then by Theorem \ref{Max. Principle} one gets $-M-v\leq u_n\leq M+v$ in $\Omega'$ which is impossible since $u_n(p)\to+\infty.$ This contradiction shows that $\mathcal{D}$ cannot have isolated points. Note that this argument proves also that a chord of $\p\mathcal{D}$ cannot have an endpoint in the interior of $\Omega$.

Next we prove that the interior $f$-geodesics are non-intersecting. In fact, if the contrary of this was true, then we can construct a triangle $\Delta$ with edges $a_1,a_2$ and $a_3$ so that $a_1,a_2\subset\p\mathcal{D}$ and $\Delta$ lies either in $\mathcal{C}$ or in $\mathcal{D}.$  Assume first that $\Delta$ lies in $\mathcal{C}$. Then by Lemma \ref{Flux-1} (i) we have 
\begin{equation}\label{Triangle}
	0={\rm F}_{u_n}[\p\Delta]={\rm F}_{u_n}[a_1]+{\rm F}_{u_n}[a_2]+{\rm F}_{u_n}[a_3].
\end{equation}
Since $a_1$ and $a_2$ lie on $\p\mathcal{D}$ we have $\lim_n {\rm F}_{u_n}[a_i] = {\rm L}_f[a_i]$ for $i=\{1,2\}$, by Lemma \ref{Flux-2}. On the other hand, again by Lemma \ref{Flux-1}, we have $|{\rm F}_{u_n}[a_3]|\leq {\rm L}_f[a_3]$, so we get a contradiction with \eqref{Triangle}. Therefore we must have $\Delta\subset\mathcal{D}$. 
But now note that
$a_i\subset\p\mathcal{C}$ for $i=\{1,2\}$, and therefore by Lemma \ref{Flux-2}, we must have $\lim_n{\rm F}_{u_n}[a_i] = -{\rm L}_f[a_i]$ for $i=\{1,2\}$. Using the previous argument we arrive again to a contradiction, and this proves (i).

In order to get (ii), assume that there exist two interior chords $\gamma_1$ and $\gamma_2$ with a common endpoint $p\in\p\Omega.$ Again, we can construct a triangle $\Delta$ with edges $a_1,a_2$ and $a_3$ so that $a_1,a_2\subset\p\mathcal{D}$ and $\Delta$ lies either in $\mathcal{C}$ or in $\mathcal{D}.$ Then the same argument as above proves (ii).

To prove assertion (iii), suppose that $\gamma$ is an interior chord that is a connected component of $\mathcal{D}$. Fix any point $p\in\gamma$ which lies in $\operatorname{int}\Omega.$ Clearly we can construct a quadrilateral domain $\Omega'$ such that it satisfies the properties of Proposition \ref{Scherk}. If $\p\Omega'=A_1\cup A_2\cup C_1\cup C_2$, then $\gamma$ only intersects $A_1$ and $A_2$ at a unique interior point on these arcs and $\overline{\Omega'}$ does not intersect $\mathcal{D}\setminus\gamma$. Consider $M=\sup_{C_1\cup C_2}|u_n|<\infty$ and let $v \colon \Omega' \to \R$ be the function given by Proposition \ref{Scherk}. Using Theorem \ref{Max. Principle} one obtains $-M-v\leq u_n\leq M+v$ in $\Omega'$ which is impossible since an arc of $\gamma$ lies in $\Omega'.$ This concludes the proof of (iii).

Finally, assume that there exists a chord $\gamma$ with endpoint $p\in\operatorname{int}C_i$ for some $C_i \subset \partial\Omega$. If ${\rm k}_f[C_i](p)>0$ then Lemma \ref{Local Control} gives us a contradiction. On the other hand, if ${\rm k}_f[C_i](p)=0$, then we have two cases to check: either there is a sequence \(\{p_n\}\subset C_i\) so that $p_n\to p$ and ${\rm k}_f[C_i](p_n)>0$ or there is a subarc $\eta \subset C_i$ so that ${\rm k}_f[\eta]\equiv0$ and $p$ lies in the interior of $\eta.$ The first case would make it  possible to find a domain $\Delta$ satisfying the condition of Proposition \ref{barrier} so that $p$ lies in the interior of an arc of $\p\Delta$ which is not an $f$-geodesic and $\Delta\subset\overline{\Omega}$. Suppose first that $\{u_n\}$ is unbounded on $C_i$ and let $v_M \colon \Delta\to\R$ be the function given by Proposition \ref{barrier} with continuous data $M,$ where $M$ is a fixed constant. By Theorem \ref{Max. Principle} one has
\[-v_M<u_n\ \operatorname{in}\ \Delta\ \operatorname{for\ all}\ n \operatorname{large\ enough}.\]
Since $M$ was arbitrary, this implies that a small neighbourhood of $p$ lies in $\mathcal{D}$, but this is impossible because $\gamma\subset\p\mathcal{D}$. On the other hand, if $\{u_n\}$ is bounded on $C_i$ and $v \colon \Delta\to\R$ is the function given by Proposition \ref{barrier} with continuous data $M,$ where $M=\sup_C |u_n|,$ then by Theorem \ref{Max. Principle} one obtains $u_n\leq v$ in $\Delta$, which again leads to a contradiction. 

Hence, there exists a subarc $\eta$ of $C_i$ so that ${\rm k}_f[\eta] \equiv0$ and $p$ lies in the interior of $\eta.$ Again we have two cases to check: either $\{u_n\}$ is unbounded or $\{u_n\}$ is bounded on $C_i.$ If $\{u_n\}$ is unbounded on $C_i,$ then we can find a triangle $\Delta$ with edges $a_1,a_2$ and $a_3$ so that $a_1\subset\gamma$, $a_2\subset C_i$ and $a_3$ lies in $\mathcal{C}$, and similar argument as in the proof of (i) would lead to a contradiction. In turn, if $\{u_n\}$ is bounded on $\gamma,$  we can find a triangle $\Delta$ with edges $a_1,a_2$ and $a_3$ so that $a_1\subset\gamma$, $a_2\subset \mathcal{D}\cap C_i$ and $a_3$ lies in $\mathcal{D}$, again, this leads to a contradiction and hence finishes the proof of (iv).
\end{proof}

In particular, this proposition implies the next result.

\begin{Proposition}\label{Real struct.}
Let $\Omega\subset\mathbb{P}$ be an admissible domain whose boundary is a union of f-convex arcs $C_i$. Let $\{u_n\}$ be either an increasing or a decreasing sequence of solutions to \eqref{soliton} over $\Omega$ such that for every open arc $C_i$, $u_n$ extends continuously to $C_i$ and either $u_n|_{C_i}$ converge uniformly to a continuous function or $+\infty$ or $-\infty,$ respectively. Let $\mathcal{D}$ be the divergence set of $\{u_n\}.$ Then each connected component of $\mathcal{D}$ is an admissible polygon in $\Omega.$
\end{Proposition}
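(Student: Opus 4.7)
The plan is to read off the conclusion directly from the structural information already assembled in Proposition \ref{divergence}: a connected component of the divergence set should, by those structural results, have boundary consisting of interior $f$-geodesic chords (with endpoints at vertices of $\Omega$) and arcs of $\partial\Omega$, which is precisely the definition of admissible polygon.

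First I would fix an arbitrary connected component $D$ of the divergence set $\mathcal{D}$. Proposition \ref{divergence}(i) rules out $D$ being an isolated point, and Proposition \ref{divergence}(iii) rules out $D$ being a single interior chord; hence $D$ has non-empty interior, is contained in $\overline{\Omega}$ and has a genuine one-dimensional boundary. Next, I would invoke Proposition \ref{divergence}(i) once more to describe $\partial D$ as the union of finitely many non-intersecting interior $f$-geodesic arcs (``interior chords'') together with arcs lying on $\partial\Omega$.

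Then I would identify the vertices of $\partial D$. By Proposition \ref{divergence}(iv), each endpoint of an interior chord belongs to the set of vertices of $\partial\Omega$. Since $\partial\Omega$ is the concatenation of the $f$-convex edges $C_i$ meeting only at those vertices, every arc of $\partial D \cap \partial\Omega$ joining two consecutive chord endpoints (or closing up between itself) is automatically a union of whole edges $C_i$. Therefore $\partial D$ is made only of $f$-geodesic segments and edges of $\partial\Omega$, and its vertices are among the vertices of $\Omega$; this matches verbatim the definition of admissible polygon.

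The step I expect to require the most care is purely combinatorial bookkeeping: one must verify that $\partial D$ really closes up into a finite polygonal boundary (rather than, say, accumulating infinitely many chords, or having chord endpoints that land in the interior of some $C_i$). The first concern is ruled out by admissibility of $\Omega$ (finitely many edges $C_i$) together with Proposition \ref{divergence}(i)-(ii), which prevents chords from meeting at convex corners and from accumulating; the second is exactly what Proposition \ref{divergence}(iv) forbids. With those two points spelled out, the proof reduces to a direct verification against the definition of admissible polygon.
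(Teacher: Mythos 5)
Your proposal is correct and follows essentially the same route as the paper, which deduces the statement directly from Proposition \ref{divergence} (with the remaining combinatorial details referred to the classical Jenkins--Serrin argument): parts (i) and (iii) exclude degenerate components, (i) describes $\partial\mathcal{D}$ as non-intersecting interior $f$-geodesic chords plus boundary arcs, and (iv) places the chord endpoints at vertices of $\Omega$, so each component satisfies the definition of admissible polygon. Your extra bookkeeping remarks (finiteness of chords, no chord endpoints in the interior of an edge) are exactly the minor verifications the paper leaves to the reader.
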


\begin{remark}
We refer the reader to the classical reference \cite{Jenkins-Serrin} for further details on the proof of the proposition above in the Euclidean space, minor modifications of which yield the proof in our setting.
\end{remark}

\section{Existence of Jenkins-Serrin graphs}\label{E-J-S}
We have now obtained the local existence and studied the properties of sequences of solutions to \eqref{soliton}, and in this section we will finally prove the existence of Jenkins-Serrin solution of \eqref{soliton}. Before stating the main result, we recall and introduce some  notations. From now on, $\Omega$ always will be an admissible domain in $\mathbb{P}$ so that 
	\[
		\p\Omega = \left( \bigcup_{i=1}^l A_i \right) \bigcup \left( \bigcup_{j=1}^tB_j \right)	
		\bigcup \left( \bigcup_{k=1}^z C_k \right),
	\]
where the arcs $A_i$ and $B_j$ are $f$-geodesics and the arcs $C_k$ are $f$-convex. Let $\mathcal{P}$ be an admissible polygon. Then with the notations above, we define
\[
\alpha_f(\mathcal{P})=\sum_{A_i\subset\p\mathcal{P}}{\rm L}_f[A_i] \quad \text{and} \quad \beta_f(\mathcal{P})=\sum_{B_i\subset\p\mathcal{P}}{\rm L}_f[B_i]. 
\]
Recall that a function $u \colon \Omega\to\R$ is a Jenkins-Serrin solution of \eqref{soliton} over $\Omega$ with continuous boundary data $c_k \colon C_k\to\R$ if $u$ is a solution of \eqref{soliton} such that $u=c_k$ on $C_k$ for all $k$, $u\to+\infty$ on $A_i$ for all $i$, and $u\to-\infty$ on $B_j$ for all $j$. If $\{C_k\}=\varnothing,$ then we only require that $u\to+\infty$ on $A_i$ for all $i$ and $u\to-\infty$ on $B_j$ for all $j.$ 
\begin{Theorem}[Existence]\label{Existence}
Let $\Omega\subset \mathbb{P}$ be an admissible domain such that for any admissible polygon $\mathcal{P} \subset \bar\Omega$ we have
\begin{equation}\label{struc. cond.}
	2\alpha_f(\mathcal{P}) < {\rm L}_f[\p\mathcal{P}] 
		\quad\text{and} \quad 
	2\beta_f(\mathcal{P}) < {\rm L}_f[\p\mathcal{P}]. 
\end{equation}
Then
\begin{itemize}
\item[(a)] If $\{C_k\}\neq\varnothing$ and $c_k \colon C_k\to\R$ are  continuous functions {\rm(}bounded from below, in case $\{B_j\}=\varnothing$ and bounded from above, in case $\{A_j\}=\varnothing${\rm)}, then there exists a Jenkins-Serrin solution of \eqref{soliton} with continuous boundary data $c_k$.

\item[(b)] If $\{C_k\}=\varnothing$ and $\alpha_f(\Omega)=\beta_f(\Omega),$ then there exists a Jenkins-Serrin solution of \eqref{soliton}. 
\end{itemize}
Converselly, if $u$ is a Jenkins-Serrin solution of \eqref{soliton} with  continuous boundary data $$c_k \colon C_k\to\R$$ and if $\{C_k\}\neq\varnothing,$ then inequalities  \eqref{struc. cond.} hold for all admissible polygons $\mathcal{P}$, and if $\{C_k\}=\varnothing$ then we also have $\alpha_f(\Omega)=\beta_f(\Omega)$.
\end{Theorem}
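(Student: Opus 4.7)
Let $u$ be a Jenkins-Serrin solution and $\mathcal{P}\subseteq\overline{\Omega}$ an admissible polygon. Lemma \ref{Flux-1}(i) gives ${\rm F}_u[\partial\mathcal{P}]=0$. Decomposing $\partial\mathcal{P}$ into its $A_i$-, $B_j$-, $C_k$-pieces and interior $f$-geodesic chords, denote by $R$ the union of the $C_k$-pieces and the interior chords. Parts (iii)-(iv) of Lemma \ref{Flux-1} identify the $A_i$- and $B_j$-contributions to the flux as $+\alpha_f(\mathcal{P})$ and $-\beta_f(\mathcal{P})$, so ${\rm F}_u[R]=\beta_f(\mathcal{P})-\alpha_f(\mathcal{P})$. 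Part (v) on the $f$-convex $C_k$-pieces and part (ii) on the interior chords (where $|\nabla u|<\infty$ forbids equality in (ii)) yield $|{\rm F}_u[R]|<{\rm L}_f[R]={\rm L}_f[\partial\mathcal{P}]-\alpha_f(\mathcal{P})-\beta_f(\mathcal{P})$ whenever $R\neq\varnothing$, which rearranges to both $2\alpha_f(\mathcal{P})<{\rm L}_f[\partial\mathcal{P}]$ and $2\beta_f(\mathcal{P})<{\rm L}_f[\partial\mathcal{P}]$. When $R=\varnothing$, which forces $\mathcal{P}=\Omega$ and $\{C_k\}=\varnothing$, the flux identity reduces to $\alpha_f(\Omega)=\beta_f(\Omega)$.

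\textbf{Sufficiency of (a).} For $n,m\in\mathbb{N}$, let $u_{n,m}$ be the Perron solution (Theorems \ref{Perron's method}, \ref{Perron's method-boundary data}) with bounded continuous boundary datum $n$ on $A_i$, $-m$ on $B_j$, and $\max(-m,\min(n,c_k))$ on $C_k$. By Theorem \ref{Max. Principle}, $u_{n,m}$ is monotone non-decreasing in $n$ and non-increasing in $m$. Set
\[
v_n:=\lim_{m\to\infty}u_{n,m}, \qquad \tilde u_m:=\lim_{n\to\infty}u_{n,m}.
\]
Propositions \ref{convergence}-\ref{Real struct.} identify any component $\mathcal{P}$ of the divergence set of either monotone sequence as an admissible polygon. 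Because $u_{n,m}$ remains bounded on $A_i\cup C_k$ (in the $m\to\infty$ pass) and on $B_j\cup C_k$ (in the $n\to\infty$ pass), $\partial\mathcal{P}\cap\partial\Omega$ consists only of $B_j$-arcs or only of $A_i$-arcs, respectively. Combining Lemma \ref{Flux-1}(i) with the pointwise gradient asymptotic of Remark \ref{estimate of normal} on the boundary arcs and on the interior chords (approached from the convergence side) yields either $2\beta_f(\mathcal{P})={\rm L}_f[\partial\mathcal{P}]$ or $2\alpha_f(\mathcal{P})={\rm L}_f[\partial\mathcal{P}]$, each contradicting the strict hypothesis. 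Hence $v_n$ and $\tilde u_m$ both exist as solutions on $\Omega$, and Proposition \ref{Continuous data} identifies their boundary values.

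A second monotone pass then applies: $\{v_n\}$ is non-decreasing and $\{\tilde u_m\}$ is non-increasing, and the same divergence-set argument gives $u:=\lim_n v_n$ as a solution of \eqref{soliton}. Proposition \ref{Continuous data} together with the monotonicity yields $u=c_k$ on $C_k$ and $u\to+\infty$ on $A_i$. The sandwich inequality $v_n\le u_{n,m}\le\tilde u_m$ passes to $u\le\tilde u_m$ pointwise, and since $\tilde u_m=-m$ on $B_j$, letting $m\to\infty$ forces $u\to-\infty$ on $B_j$. So $u$ is the desired Jenkins-Serrin graph. Part (b) follows by the same scheme, with $u_{n,m}$ normalised at a fixed interior basepoint to kill the additive ambiguity; the balance $\alpha_f(\Omega)=\beta_f(\Omega)$ is what excludes the divergence set from being all of $\Omega$ in each pass.

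\textbf{Main obstacle.} The decisive technical step is the flux computation along interior $f$-geodesic chords of a divergence-set component, since $u_{n,m}$ is uniformly bounded neither on $\Omega$ nor on a full neighbourhood of such a chord, obstructing a direct use of Lemma \ref{Flux-2}. The substitute is Remark \ref{estimate of normal}, which pinpoints the direction of $f\nabla u_{n,m}/W$ on one side of the chord; this local information must be carefully integrated and combined with the correct outward-normal convention on each polygon component so that the contributions from $\partial\Omega$-edges and from interior chords reinforce each other to produce the sharp identity needed for the contradiction.
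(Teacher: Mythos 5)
Your necessity argument and your sufficiency scheme for part (a) are essentially sound and, despite the different bookkeeping, run on the same machinery as the paper: the paper first treats the one-sided case $\{B_j\}=\varnothing$ with a single monotone sequence and a divergence-set/flux contradiction, then handles the mixed case by using those one-sided solutions as barriers $u^{\pm}$; your doubly indexed family $u_{n,m}$ with two monotone passes reproduces this (your $v_1$ and $\tilde u_1$ play the role of $u^{-}$ and $u^{+}$, and the sandwich $v_n\le u_{n,m}\le\tilde u_m$ does the work of the paper's barriers). Two local slips in (a): first, Remark \ref{estimate of normal} concerns a single solution that diverges along a boundary $f$-geodesic, so it does not apply to the approximating solutions $u_{n,m}$ near an interior chord of the divergence set (each $u_{n,m}$ is finite there); the correct tool is the limit-flux statement of Lemma \ref{Flux-2} applied from the convergence side, exactly as in the proof of Proposition \ref{divergence}(i) -- your parenthetical ``approached from the convergence side'' is the right idea, but the cited remark does not cover it. Second, in the second pass ``the same divergence-set argument'' is not enough on its own: a divergence component $\mathcal{P}$ of $\{v_n\}$ could contain $B_j$-edges, where ${\rm F}_{v_n}[B_j]=-{\rm L}_f[B_j]$ exactly by Lemma \ref{Flux-1}(iv), and the flux identity then only gives ${\rm L}_f[\p\mathcal{P}]\le 2\alpha_f(\mathcal{P})+2\beta_f(\mathcal{P})$, which is no contradiction; the sandwich $v_n\le\tilde u_1$ that you state anyway makes this divergence set empty, so the slip is repairable. (Also, the flux argument yields inequalities such as $2\beta_f(\mathcal{P})\ge{\rm L}_f[\p\mathcal{P}]$ rather than equalities, and your claim that $\p\mathcal{P}\cap\p\Omega$ consists only of $B_j$-arcs is neither justified nor needed.)

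The genuine gap is part (b). When $\{C_k\}=\varnothing$ and $\alpha_f(\Omega)=\beta_f(\Omega)$, your first pass breaks down: for fixed $n$ the decreasing sequence $u_{n,m}$ diverges to $-\infty$ on all of $\Omega$ as $m\to\infty$, and the structural hypothesis cannot prevent this because for $\mathcal{P}=\Omega$ one has $2\beta_f(\Omega)={\rm L}_f[\p\Omega]$ exactly (the classical Scherk square already exhibits this), so $v_n$ simply does not exist. Your proposed remedy -- normalising $u_{n,m}$ at an interior basepoint -- does not rescue the scheme as stated: the normalised family is no longer monotone, so Propositions \ref{convergence}--\ref{Real struct.} are unavailable, there is no a priori local boundedness away from the basepoint, and nothing controls the boundary behaviour of a limit (it could converge to a finite value on the $A_i$). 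This is precisely the content of the paper's third case, which you omit: a specific normalisation $\mu(n)$ chosen through the superlevel-set components $E^i_c$, local boundedness of $v_n-\mu(n)$ established with the barriers $u_i^{\pm}$ furnished by the one-sided case, and a flux argument using $\alpha_f(\Omega)=\beta_f(\Omega)$ together with Lemma \ref{Flux-1}(iv)--(v) to show that both $\mu(n)\to\infty$ and $n-\mu(n)\to\infty$, which is what forces the limit to be $+\infty$ on the $A_i$ and $-\infty$ on the $B_j$. The assertion that ``the balance $\alpha_f(\Omega)=\beta_f(\Omega)$ excludes the divergence set from being all of $\Omega$ in each pass'' is false as stated -- the balance is exactly what allows total divergence in each unnormalised pass -- so case (b) remains unproved in your proposal.
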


\begin{proof}
The  proof will be divided  into three cases depending on the structure of $\partial\Omega$.
\begin{flushleft}
\textbf{1st Case}: Assume that $\{B_j\}=\varnothing$ and each function $c_k$ is continuous and bounded from below.
\end{flushleft}
By Theorem \ref{Perron's method} and Theorem \ref{Perron's method-boundary data} there exists a solution $u_n$ of \eqref{soliton} satisfying $u_n|A_i=n$ and $u_n|C_k=\min\{n,c_k\}.$ Moreover, by Theorem \ref{Max. Principle} the sequence $\{u_n\}$ is increasing. Let $\mathcal{D}$ be the divergence set of $\{u_n\}.$ If $\mathcal{D}\neq\varnothing,$ then by Proposition \ref{Real struct.} each connected component of $\mathcal{D}$ is an admissible polygon to $\Omega.$ Taking any polygon $\mathcal{P}\subset\mathcal{D}$ and using Lemma \ref{Flux-1} and Lemma \ref{Flux-2} we conclude that
\begin{equation*}
0={\rm F}_{u_n}[\p\mathcal{P}]=\sum_{A_i\subset\p\mathcal{P}}F_{u_n}[A_i]+F_{u_n}\left[\p\mathcal{P}\setminus\bigcup_{A_i\subset\p\mathcal{P}}A_i\right],
\end{equation*}
\begin{equation*}
\left|\sum_{A_i\subset\p\mathcal{P}}F_{u_n}[A_i]\right|\leq\alpha_f(\mathcal{P})
\end{equation*}
and 
\begin{equation}\label{mainthm-eq1}
\lim_n{\rm F}_{u_n}\left[\p\mathcal{P}\setminus\bigcup_{A_i\subset\p\mathcal{P}}A_i\right]=-{\rm L}_f\left[\p\mathcal{P}\setminus\bigcup_{A_i\subset\p\mathcal{P}}A_i\right]=-{\rm L}_f[\p\mathcal{P}]+\alpha_f(\mathcal{P}),
\end{equation}
where the first equality in \eqref{mainthm-eq1} holds due to the argument that we used to prove assertion (i) in Proposition \ref{divergence}. This would imply ${\rm L}_f[\p\mathcal{P}]\leq2\alpha_f(\mathcal{P})$, which is a contradiction, and therefore we must have $\mathcal{D}=\varnothing$. Now by Proposition \ref{Compactness Theorem} and Proposition \ref{Continuous data} a subsequence of $\{u_n\}$ converges uniformly on compact subsets of $\Omega$ to a solution $u$ of \eqref{soliton} with the required properties.\\

Now we prove that the existence of a solution implies the structural conditions \eqref{struc. cond.}. For this, suppose that $u \colon \Omega\to\R$ is a Jenkins-Serrin solution of \eqref{soliton} with boundary data $c_k \colon C_k\to\R$, where $c_k$ is continuous and bounded from below. Take any admissible polygon $\mathcal{P}$ in $\Omega.$ By Lemma \ref{Flux-1} we have
\begin{align*}
\alpha_f(\mathcal{P}) &= {\rm F}_u\left[\bigcup_{A_i\subset\p\mathcal{P}}A_i\right]=-{\rm F}_u\left[\p\mathcal{P}\setminus\bigcup_{A_i\subset\p\mathcal{P}}A_i\right] \\
&< {\rm L}_f\left[\p\mathcal{P}\setminus\bigcup_{A_i\subset\p\mathcal{P}}A_i\right]
= {\rm L}_f[\p\mathcal{P}]-\alpha_f(\mathcal{P}),
\end{align*}
since there exists at least one arc $\eta$ of $\p\mathcal{P}$ so that either $\eta$ lies in $\Omega$ or $\eta$ coincides with an arc $C_k.$ Therefore $2\alpha_f(\mathcal{P})<{\rm L}_f[\p\mathcal{P}]$ for each admissible polygon $\mathcal{P}$ in $\Omega.$

\begin{flushleft}
\textbf{2nd Case}: Assume that $\{A_i\}\neq\varnothing$, $\{B_j\}\neq\varnothing$ and $\{C_k\}\neq\varnothing.$
\end{flushleft}
By the first case there exist solutions $u^+$ and $u^-$ of \eqref{soliton} so that 
\begin{equation*}
u^+\equiv0\ \operatorname{on}\ \{B_j\},\ u^+|C_k=\max\{0,c_k\}\ \operatorname{and}\ u^+\to+\infty\ \operatorname{on}\ \{A_i\}
\end{equation*}
and
\begin{equation*}
u^-\equiv0\ \operatorname{on}\ \{A_i\},\ u^-|C_k=\min\{0,c_k\}\ \operatorname{and}\ u^-\to-\infty\ \operatorname{on}\ \{B_j\}.
\end{equation*}
Moreover, by Proposition \ref{Perron's method} and Proposition \ref{Perron's method-boundary data} for each $n$ there exists a solution $u_n$ of \eqref{soliton} so that 
\begin{equation*}
	u_n \equiv n\ \operatorname{on}\ \{A_i\},\ 
	u_n|C_k=\tilde{c}_k\ \operatorname{and}\ 
	u_n\equiv -n\ \text{ on } \ \{B_j\},
\end{equation*}
where 
\[
\tilde{c}_k =
\begin{cases}
n, &\text{if }  c_k \geq n; \\
c_k, &\text{if }  -n\leq c_k\leq n;  \\
-n, &\text{if }  c_k \leq -n. 
\end{cases}
\]

Since $u^-\leq u_n\leq u^+$, by Theorem \ref{Max. Principle}, then by Proposition \ref{Compactness Theorem} and Proposition \ref{Continuous data} a subsequence of $\{u_n\}$ must converge uniformly on compact subsets of $\Omega$ to a solution $u$ of \eqref{soliton} with the required boundary data.

To conclude this case, we prove that the existence of a solution implies the structural conditions \eqref{struc. cond.}. Suppose that $u \colon \Omega\to\R$ is a Jenkins-Serrin solution with continuous boundary data $c_k \colon C_k\to\R$. Take any admissible polygon $\mathcal{P}$ in $\Omega.$ If $\mathcal{P}\neq\Omega$, then there exists an edge of $\p\mathcal{P}$ which lies in $\Omega$ and from Lemma \ref{Flux-1} we obtain
\begin{align*}
\beta_f(\mathcal{P}) &= -{\rm F}_u\left[\bigcup_{B_j\subset\p\mathcal{P}}B_j\right]={\rm F}_u\left[\p\mathcal{P}\setminus\bigcup_{B_j\subset\p\mathcal{P}}B_j\right] \\
&< {\rm L}_f\left[\p\mathcal{P}\setminus\bigcup_{B_j\subset\p\mathcal{P}}B_j\right] 
= {\rm L}_f[\p\mathcal{P}]-\beta_f(\mathcal{P}).
\end{align*}
Therefore $2\beta_f(\mathcal{P})<{\rm L}_f[\p\mathcal{P}]$, and by the first case, we have also $$2\alpha_f(\mathcal{P})<{\rm L}_f[\p\mathcal{P}]$$ for each admissible polygon $\mathcal{P}\neq\Omega$. As these conditions are satisfied also when $\mathcal{P}=\Omega$, we have proved the second case.

\begin{flushleft}
\textbf{3rd Case}: Assume that $\{C_k\}=\varnothing$.
\end{flushleft}
First notice that the hypothesis on $\Omega$ implies that $l=t$, i.e., there is equal number of arcs $A_i$ and $B_j$. For each $n$ let $v_n$ be the solution of \eqref{soliton} satisfying $v_n|A_i=n$ and $v_n|B_j=0$. Clearly by Theorem \ref{Max. Principle} we must have $0\leq v_n\leq n$. Given any $c\in(0,n)$, we denote 
	\[
	E_c \coloneqq \{ p \in \Omega \colon v_n(p)>c \} \quad \text{and} \quad F_c \coloneqq \{ p \in \Omega \colon v_n(p) < c \}.
	\]
Let $E^i_c$ be the connected component of $E_c$ whose closure contains $A_i$, and similarly let $F^j_c$ be connected component of $F_c$ whose closure contains $B_j$. Notice that if $E_c\neq\bigcup_iE^i_c,$ then $v_n$ is a constant by maximum principle. Hence $E_c = \bigcup_i E^i_c$, and similarly we conclude that $F_c = \bigcup_j F^j_c$.

Let now $c$ be so close to $n$ that $\{E_c^i\}$'s are pairwise disjoint. This is possible by our assumption on $\Omega$ and $u_n$. Define 
	\[
		\mu(n)=\inf \{ c \in (0,n) \colon E_c^i\cap E_c^j = \varnothing\ \text{ for all } \ i\neq j\}.
	\]
Since $\overline{\Omega}$ is compact, there exists at least one pair $i$ and $j$ so that
	\[
	\overline{E}_{\mu(n)}^i \cap \overline{E}_{\mu(n)}^j \neq \varnothing.
	\]
Moreover, for each $i$ there exists $j$ so that
	\[
	F_{\mu(n)}^i\cap F_{\mu(n)}^j=\varnothing,
	\]
because if this was not the case, then $\cup_i F_{\mu(n)}^i$ would be connected, and consequently $\overline{E}_{\mu(n)}^i \cap \overline{E}_{\mu(n)}^j = \varnothing$.
 
Now, for every $n$, we define the function $u_n=v_n-\mu(n)$, and we prove that $\{u_n\}$ is locally bounded on compact subsets of $\Omega.$ To do this, we note that by the first case there exist auxiliary functions $u_i^+$ and $u_i^-$ that satisfy
\begin{equation*}
	u_{i}^{+} \equiv 0 \ \text{ on } \ \p\Omega \setminus A_i, \quad u_{i}^{+}|_{A_i} = +\infty
\end{equation*}
and
\begin{equation*}
u_i^-|_{B_j} \equiv -\infty \ \text{ for } \ j\neq i,\ \operatorname{and}\ u_i^-=0\ \operatorname{on}\ \p\Omega\setminus\bigcup_{j\neq i}B_j.
\end{equation*}
Then, given any $p\in\Omega$, we define the functions
	\[
		u^+(p) \coloneqq \max_{i} \{u^+_i(p)\} \ \text{ and } \ u^-(p) \coloneqq \min_{i}\{u^-_i(p) \},
	\]
and claim that 
	\[
	u^-\leq u_n\leq u^+
	\]
holds in $\Omega$.

Let $p\in\Omega$, and note first that if $u_n(p)=0$, then we have the claim. Therefore we suppose that $u_n(p) > 0$, which implies that $v_n(p)>\mu(n),$ and consequently we must have $p\in E_{\mu(n)}^i.$ Since $u_n\leq u_i^+$ on $\p E_{\mu(n)}^i,$ then by Theorem \ref{Max. Principle} we must have $u_n\leq u_i^+\leq u^+$ in $E_{\mu(n)}^i$. As $u^-$ is negative, we have the desired inequality $u_n(p)>0$. Finally, if $u_n(p)< 0$ we can apply the same argument replacing $E_{\mu(n)}^i$ by $F_{\mu(n)}^i$. 
Therefore $\{u_n\}$ is locally bounded on compact subsets of $\Omega.$

By construction
	\[
		u_n|A_i=n-\mu(n) \quad \text{and} \quad u_n|B_j=-\mu(n),
	\]
and to finish the proof, we show that $\{n-\mu(n)\}$ and $\{\mu(n)\}$ are diverging to infinity. Then we would have that a subsequence of $\{u_n\}$ converges uniformly on compact subset of $\Omega$ to a solution $u$ of \eqref{soliton} with the desired properties. We show that $\{n-\mu(n)\}$ diverges, and similar argument proves the claim also for $\{\mu(n)\}$. On the contrary, suppose that there exists a subsequence of $\{n-\mu(n)\}$ converging to a finite limit $\tau$. This implies that $\mu(n)\to+\infty$ and hence
	\[
		u_n = n - \mu(n)\to\tau\ \text{ on } \ A_i \ \text{ and } \ u_n = -\mu(n)\to-\infty \ 	
		\text{ on } \ B_j.
	\]
Let $u$ be the solution obtained from a convergent subsequence of $\{u_n\}$, so that
	\[
		u\to\tau \ \text{ on } \ A_i \quad \text{and} \quad u_n\to-\infty \ \text{ on } \ B_j.
	\]
From Lemma \ref{Flux-1} we have
	\begin{equation*}
		0 = F_u[\p\Omega] = F_u \left[ \bigcup_{i} A_i \right] + F_u \left[\bigcup_{j} B_j \right],
	\end{equation*}
but, on the other hand, Lemma \ref{Flux-1} gives
	\begin{equation*}
		\left|F_u\left[\bigcup_{i}A_i\right]\right|<\alpha_f(\Omega) \quad \text{and} \quad 
		F_u\left[\bigcup_{j}B_j\right]=-\beta_f(\Omega),
	\end{equation*}
which is a contradiction with our hypothesis on $\Omega$. Consequently $\{n-\mu(n)\}$ is diverging to infinity. 

Recall that we proved in the previous case that the existence of Jenkins-Serrin solution implies the structural conditions \eqref{struc. cond.} for each admissible polygon $\mathcal{P}\neq\Omega$. Therefore it remains to prove the last structural condition when $\mathcal{P}=\Omega.$ But the last condition follows now by Lemma \ref{Flux-1}, since
\begin{align*}
\beta_f(\Omega)=-{\rm F}_u\left[\bigcup_{j}B_j\right]={\rm F}_u\left[\p\Omega\setminus\bigcup_{j}B_j\right] &= {\rm F}_u\left[\bigcup_{i}A_i\right]=\alpha_f(\Omega).
\end{align*}
\end{proof}

We also have the following uniqueness result for the Jenkins-Serrin graphs.

\begin{Theorem}[Uniqueness]\label{thm-uniq}
Let $\Omega \subset \mathbb{P}$ be a bounded admissible domain and suppose that $u_1$ and $u_2$ are solutions of \eqref{soliton}. Then, if $\{C_k \} \neq \varnothing$ and $u_1=u_2$ on $\{C_k \}$, we have $u_1=u_2$ in $\Omega$. On the other hand, if $\{C_k \} = \varnothing$, then $u_2-u_1$ is a constant.
\end{Theorem}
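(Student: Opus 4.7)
The plan is to extend the divergence-theorem argument behind Theorem \ref{Max. Principle} by controlling the boundary flux near the infinite-valued edges via Remark \ref{estimate of normal}. Write $w := u_1 - u_2$ and $X_i := f^2 \nabla u_i / W_i$; each $X_i$ is divergence-free by \eqref{soliton} and satisfies $h_c(X_i,X_i)\le f^2$ pointwise. The basic identity computed inside the proof of Theorem \ref{Max. Principle} reads
\begin{equation*}
h_c(\nabla w, X_1 - X_2) \;=\; \tfrac{1}{2}(W_1 + W_2)\, g_c(N_1 - N_2, N_1 - N_2) \;\ge\; 0,
\end{equation*}
with equality exactly where $\nabla u_1 = \nabla u_2$. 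The task is to integrate this against a bounded cutoff depending on $w$ and push the boundary terms to zero in spite of $w$ being indeterminate on $A_i\cup B_j$.

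Fix $\epsilon, K > 0$ and take the Lipschitz truncation $\varphi := \max\bigl(0, \min(w,K) - \epsilon\bigr)$, so that $0\le\varphi\le K-\epsilon$ and $\nabla\varphi = \nabla w$ on $\{\epsilon < w < K\}$, while $\nabla\varphi = 0$ a.e.\ elsewhere. Let $\Omega^\tau \subset \Omega$ be an exhaustion obtained by removing $\tau$-tubular neighborhoods of $\bigcup_i A_i \cup \bigcup_j B_j$ while keeping all $C_k$ in $\partial\Omega^\tau$. Since $\Div(X_1 - X_2) = 0$, the divergence theorem gives
\begin{equation*}
\int_{\Omega^\tau} h_c(\nabla\varphi, X_1 - X_2)\;=\;\int_{\partial\Omega^\tau} \varphi\, h_c(X_1 - X_2, \nu).
\end{equation*}
On the parts of $\partial\Omega^\tau$ accumulating at $C_k$, $w\to 0$ forces $\varphi\to 0$ (since $\epsilon > 0$), so this contribution vanishes. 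On the curves parallel to $A_i$, Remark \ref{estimate of normal} yields $h_c(X_i,\nu)/f \ge 1 - \delta(\tau)$ with $\delta(\tau)\to 0$ as $\tau\to 0$, hence
\begin{equation*}
h_c(X_i - f\nu,\, X_i - f\nu)\;\le\; f^2 - 2f\cdot f(1-\delta(\tau)) + f^2 \;=\; 2f^2\, \delta(\tau)
\end{equation*}
uniformly; so $|h_c(X_1 - X_2, \nu)|\to 0$ uniformly along these curves. Since $\varphi\le K-\epsilon$ is bounded and the $f$-length of the parallel curves is comparable to $L_f[A_i]$, the flux over this part tends to $0$. The estimate on curves parallel to $B_j$ is analogous, using the $-f\nu$ version of the same remark.

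Letting $\tau\to 0$ gives $\int_\Omega h_c(\nabla\varphi, X_1 - X_2)=0$; combined with the identity above this forces $\nabla u_1 = \nabla u_2$ a.e.\ on $\{\epsilon < w < K\}$. Sending $\epsilon\to 0$, $K\to\infty$ and repeating the argument on $\{w<0\}$ yields $\nabla u_1 \equiv \nabla u_2$ on $\Omega$, so by connectedness $w$ is constant. Under the hypothesis $\{C_k\}\ne\varnothing$ with $u_1 = u_2$ on $C_k$ the constant must be $0$, giving the first assertion; in the case $\{C_k\}=\varnothing$ only constancy of $w$ follows, which is the second assertion. The main technical point to verify carefully is the boundary estimate near $A_i\cup B_j$: one needs the $f$-length of the ``side" curves at distance $\tau$ to stay bounded as $\tau\to 0$, so that a $K$-bounded $\varphi$ paired with a uniformly vanishing $X_1-X_2$ really integrates to $o(1)$; this is precisely the uniform conormal control supplied by Remark \ref{estimate of normal}.
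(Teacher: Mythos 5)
Your overall strategy is the same as the paper's: truncate $u_1-u_2$, apply the divergence theorem to $\varphi\,(X_1-X_2)$ on a subdomain avoiding the edges carrying infinite data, use the interior identity \eqref{Ineq. normal}, kill the $C_k$-contribution because $\varphi$ vanishes near $C_k$, and kill the $A_i,B_j$-contribution with the conormal estimate of Remark \ref{estimate of normal}. The genuine gap is at the vertices. Remark \ref{estimate of normal} gives $h_c\left(f\frac{\nabla u_i}{W_i},\nu\right)\geq 1-\delta$ only for points close to a \emph{compact} subarc $\lambda$ of the open edge, i.e.\ at a definite distance from the endpoints of $A_i$ (its proof, via Lemma \ref{Straight line}, uses balls avoiding the lines over the endpoints, and $\eta$ depends on $\lambda$). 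Along your equidistant curve $\{\dist(\cdot,\bigcup_i A_i\cup\bigcup_j B_j)=\tau\}$ there are points whose nearest boundary point is arbitrarily close to a vertex where $A_i$ meets some $C_k$ (or a $B_j$), and there neither the conormal estimate nor the vanishing of $\varphi$ is available — near such a corner the gradients of the $u_i$ are not controlled at all, and the uncontrolled portion of the equidistant curve does not shrink with $\tau$ alone. So the claim that $|h_c(X_1-X_2,\nu)|\to 0$ uniformly along the curves parallel to $A_i$ does not follow from the Remark, and the technical point you flag at the end (boundedness of the $f$-length of the parallel curves) is not where the difficulty lies.

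The repair is exactly what the paper does: introduce a second parameter by excising small disks $B_{\delta}(p)$ around the endpoints $p$ of the $A_i$ and $B_j$ in addition to the collar, and bound the flux through $\partial B_{\delta}(p)\cap\Omega$ crudely by $2K\,{\rm L}_f[\partial B_{\delta}(p)]$, which tends to $0$ as $\delta\to 0$; on the remaining parts of the parallel curves, which now stay a definite distance from the vertices, Remark \ref{estimate of normal} does apply once the collar width (your $\tau$, the paper's $\epsilon<\delta$) is chosen small after $\delta$. With this two-parameter limit your argument closes and becomes the paper's proof; the remaining differences (one-sided truncation $\max\bigl(0,\min(w,K)-\epsilon\bigr)$ applied twice versus the symmetric truncation at $\pm K$, and keeping $C_k$ in the boundary versus working on the curve $C_k'$ at distance $\epsilon$) are cosmetic.
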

\begin{proof} Define a function
	\[
		\varphi\coloneqq
		\begin{cases}
			K, &\text{if } u_1-u_2\geq K; \\
			u_1-u_2, &\text{if } -K<u_1-u_2\leq K;  \\
			-K, &\text{if } u_1-u_2\leq-K,
		\end{cases}
	\]
where $K$ is a large constant.
Then, as before, $\varphi$ is a Lipschitz function such that $-K\leq\varphi\leq K$, $\nabla\varphi = \nabla u_1-\nabla u_2$ in the set $\{ x\in\Omega \colon -K <u_1(x) - u_2(x) <K \}$ and $\nabla\varphi=0$ almost everywhere is the complement of $\{ x\in\Omega \colon -K <u_1(x) - u_2(x) <K \}$. Let 
	\[
	\Omega_{\epsilon,\delta} \coloneqq \{ x \in \Omega \colon \operatorname{dist}(x, \p\Omega) \geq \epsilon \} \setminus \bigcup_{p \in \Upsilon} B_{\delta}(p),
	\]
where $\epsilon,\delta > 0$ are small constants and $\Upsilon$ denotes the set of endpoints of $A_i$ and $B_j.$ Define also a function
\begin{equation}\label{Equation J*}
	J \coloneqq \int_{\p\Omega_{\epsilon,\delta}} \varphi \left[ h_c\left(f^2\frac{\nabla u_1}{W_1},\nu\right)-h_c\left(f^2\frac{\nabla u_2}{W_2},\nu\right) \right],
\end{equation}
where $\nu$ denotes the outer unit normal to $\p\Omega_{\epsilon,\delta}$. Since $\varphi$ is a Lipschitz function, the  divergence theorem and \eqref{Ineq. normal} give
\begin{align}\label{Ineq. Under J*}
	\nonumber J &= \int_{\Omega_{\epsilon,\delta}} h_c \left( \nabla\varphi, f^2\frac{\nabla u_1}{W_1}-f^2\frac{\nabla u_2}{W_2} \right)  \\
	&= \int_{\Omega_{\epsilon,\delta}}\frac{1}{2}(W_1+W_2)g_c(N_1-N_2,N_1-N_2),
\end{align}
where $N_i = \frac{\partial_s}{fW_i}-f\frac{\nabla u_i}{W_i}.$ 

On the other hand, observe that the boundary $\p\Omega_{\epsilon,\delta}$ is formed by arcs $A_i'$, $B_j'$, $C_k'$ and parts of $\partial B_{\delta}(p)$ when $p$ moves along $\Upsilon$. Here $A_i' = \partial \Omega_{\epsilon,\delta} \cap \{ x\in \Omega \colon \operatorname{dist}(x,A_i) \le \epsilon \}$ and similarly for $B_j'$ and $C_k'$.

 Next we suppose that $\epsilon<\delta$ and define 
 	\[
	 \Gamma = \p\Omega_{\epsilon,\delta} \setminus \bigcup_{i} A_i' \bigcup_{j} B_j' \bigcup_{k}C_k'.
	 \]
With this notation we obtain
\begin{align}\label{J* estimate}
J &= \int_{\Gamma} \varphi \left[ h_c \left(f^2\frac{\nabla u_1}{W_1},\nu\right)-h_c\left(f^2\frac{\nabla u_2}{W_2},\nu \right) \right] \\
\nonumber &+ \int_{\bigcup_i A_i'} \varphi \left[ h_c \left(f^2\frac{\nabla u_1}{W_1},\nu\right) - h_c \left(f^2\frac{\nabla u_2}{W_2},\nu \right) \right] \\
\nonumber &+ \int_{\bigcup_j B_j'} \varphi \left[ h_c \left(f^2\frac{\nabla u_1}{W_1},\nu\right) - h_c \left(f^2\frac{\nabla u_2}{W_2},\nu \right) \right] \\
\nonumber &+ \int_{\bigcup_k C_k'} \varphi \left[ h_c \left(f^2\frac{\nabla u_1}{W_1}, \nu\right) - h_c \left(f^2\frac{\nabla u_2}{W_2}, \nu \right) \right].
\end{align}

Since $\varphi=0$ in $\{C_i\}$ if $\delta$ is small enough, the first and the last terms of \eqref{J* estimate} can be estimated as
\begin{equation}\label{1st estimate}
\left| \int_{\Gamma} \varphi \left[ h_c \left(f^2\frac{\nabla u_1}{W_1},\nu\right) - h_c \left(f^2\frac{\nabla u_2}{W_2}, \nu \right) \right] \right| \leq 2K \sum_{p\in\Upsilon} {\rm L}_f [\p B_{\delta}(p)]
\end{equation}
and
\begin{equation}\label{last estimate}
\left|\int_{\bigcup_k C_k'} \varphi \left[ h_c\left(f^2\frac{\nabla u_1}{W_1}, \nu \right)-h_c \left(f^2 \frac{\nabla u_2}{W_2}, \nu \right) \right] \right| \leq 2 \epsilon \sum_{k}{\rm L}_f[C_k].
\end{equation}
Regarding the second and third term of \eqref{J* estimate}, note that the arcs $A_i'$ and $B_j'$  are $\epsilon$-close to $A_i$ and $B_j$, respectively. By remark \ref{estimate of normal}, if $\epsilon$ is small enough, 
	\begin{equation*}
		1 \geq h_c \left( f\frac{\nabla u_i}{W_i}, \nu\right) \geq 1-\delta \ \text{ on } \ \gamma, \ \text{ if } \ u \to + \infty \ \text{ along } \gamma' \text{ and} \dist_{H}(\gamma,\gamma')<\epsilon
\end{equation*}
and
	\begin{equation*}
	-1 \leq h_c \left( f\frac{\nabla u_i}{W_i},\nu \right) \leq -1+ \delta \ \text{ on } \ \gamma, \ \text{ if } \ u \to -\infty \ \text{ along } \gamma' \text{ and} \dist_H(\gamma,\gamma')<\epsilon,
\end{equation*}
where $\gamma'$ is an arc of $\p\Omega$ and $\dist_H$ denotes the Hausdorff distance. In particular, these inequalities yield
	\begin{align}\label{Another estimate}
	\left| \int_{A_i'} \varphi \left[ h_c \left( f^2 \frac{\nabla u_1}{W_1}, \nu \right) - h_c \left( f^2 \frac{\nabla u_2}{W_2}, \nu \right) \right] \right| 
	\leq K\delta{\rm L}_f[A_i']
	\end{align}
and
	\begin{align}\label{Another estimate2}
		\left| \int_{B_j'} \varphi \left[ h_c \left( f^2 \frac{\nabla u_1}{W_1}, \nu \right) - h_c \left( f^2 \frac{\nabla u_2}{W_2}, \nu \right) \right] \right| 
\leq K \delta {\rm L}_f[B_j'].
\end{align}
Finally from \eqref{Ineq. Under J*}, \eqref{J* estimate}, \eqref{1st estimate}, \eqref{last estimate}, \eqref{Another estimate} and \eqref{Another estimate2} we have
\begin{align*}
	\int_{\Omega_{\epsilon,\delta}}\frac{1}{2}(W_1+W_2)g_c(N_1-N_2,N_1-N_2)&\leq 2\epsilon\sum_{i}{\rm L}_f[C_i]+2K\sum_{p\in\Upsilon}{\rm L}_f[\p B_{\delta}(p)] \\
	&+ \sum_iK\delta{\rm L}_f[A_i']+\sum_jK\delta{\rm L}_f[B_j'].
\end{align*}

Taking $\delta\to 0$ we conclude that $N_1=N_2$ in the set $\{-K<u_1-u_2<K\}$ and hence $\nabla u_1=\nabla u_2$ in $\{-K<u_1-u_2<K\}.$ Since $K$ was arbitrary, $u_1 = u_2 + c$ in $\Omega$, where $c$ is a constant. Finally, if $\{C_i\} \neq \varnothing$ we must have $c=0$. 
\end{proof}

%%%%%%%%%%%%
%%%%%%%%%%%%
%%%%%%%%%%%%

\section{Examples of Jenkins-Serrin graphs in $\R^3$ and $\mathbb{H}^2\times\R$}\label{Examples}
We finish this paper by giving some examples of domains that satisfy \eqref{struc. cond.} in $\R^3$ and in $\mathbb{H}^2\times\R$.

\subsection{Examples in $\R^3$}\label{EX. R}

In this case $\mathbb{P}$ is a vertical plane ($\R^2$) containing the vector $e_3$ in $\R^3$ and the Ilmanen's metric is given by $g_c=e^{cx_3}\langle \cdot,\cdot\rangle$, where $\langle \cdot,\cdot\rangle$ denotes the Euclidean metric of $\R^3$. Therefore the function $f$ is given by $f=e^{c\frac{x_3}{2}}$, and from \eqref{f-geodesic curvature} we see that $\gamma$ is an $f$-geodesic in $\mathbb{P}$ if and only if $\gamma$ satisfies 
\[
{\rm k}[\gamma]=c\langle N, e_3 \rangle,
\] 
where ${\rm k}[\gamma]$ denotes the scalar curvature of $\gamma$ in $\mathbb{P}$, $N$ denotes the unit normal to $\gamma$ and $\langle \cdot,\cdot\rangle$ is the Euclidean metric of  $\mathbb{P}=\R^2$. In particular, $f$-geodesics are translating curves in $\R^2$.

Let us assume now that $c=1$. It is well known that the unique translating curves are vertical lines in the direction $e_3$ and the grim reaper curves, given by
	\[
	x_3 = -\log \cos x_1, \quad x_1\in(-\pi/2,\pi/2). 
	\]
Therefore we can produce admissible domains $\Omega\subset\mathbb{P}$ that are bounded by vertical line segments and parts of the grim reaper curves, see Figure \ref{fig-1}. If we assign boundary data $+\infty$ on the parts of the grim reaper curve (coresponding to the edges $A_1,A_2$ in Theorem \ref{Existence}) and continuous data ($0$ in Fig. \ref{fig-1}) on the vertical segments (corresponding to the edges $C_1,C_2$), the condition for the existence of solutions becomes
	\[
		{\rm L}_f[A_1] + {\rm L}_f[A_2] < {\rm L}_f[C_1] + {\rm L}_f[C_2].
	\]
\begin{center}
	\includegraphics[scale=0.9]{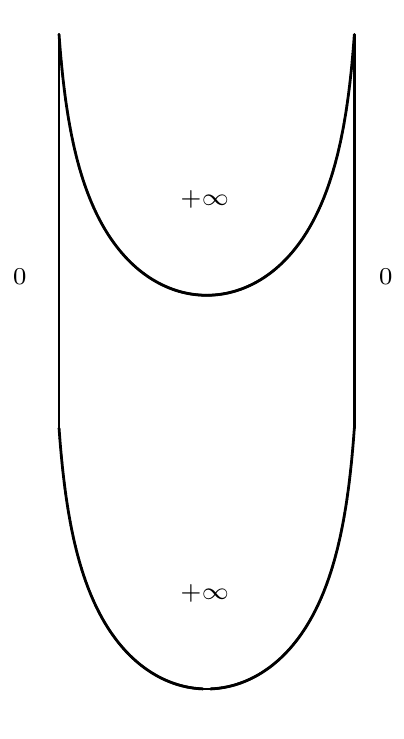}
	\captionof{figure}{Basic solution.\label{fig-1}}
\end{center}

Let us take the following parametrizations 
	\begin{align*}
		A_1&=\{(x_1, 0, a - \log \cos x_1) \colon x_1 \in (r,s)\}, \\ 
		A_2 &=\{(x_1, 0, b - \log \cos x_1) \colon x_1 \in (r,s)\},  \\
		C_1 &= \{(r,0,x_3) \colon x_3\in(a - \log \cos s , b - \log \cos s)\}, \ \text{ and } \\
		C_2 &= \{(s,0,x_3) \colon x_3\in(a - \log \cos r , b - \log \cos r)\}
	\end{align*} 
for the edges of $\Omega$ in the plane $\mathbb{P}\subset \R^3$, where $-\pi/2 < s < r < \pi/2$, $a,b \in\R$ and $a < b$. 
Then we have 
\begin{align}\label{length dom.}
	{\rm L}_f[A_1]+{\rm L}_f[A_2] 
		&= (e^{b} + e^{a})(\tan r - \tan s)\\
	\nonumber {\rm L}_f[C_1] + {\rm L}_f[C_2] 
		&= (e^{b}-e^{a})(\sec r + \sec s ).
\end{align}

If  we fix $a < b$, then choosing $r-s>0$ small enough, we ensure that ${\rm L}_f[A_1] + {\rm L}_f[A_2] < {\rm L}_f[C_1] + {\rm L}_f[C_2]$. In fact, if we reflect $n$ times the basic solution given by Figure \ref{fig-1}, we obtain a periodic surface with alternating boundary values $+\infty$ and $-\infty$, see Figure \ref{fig-2}.
\begin{center}\includegraphics[scale=0.9]{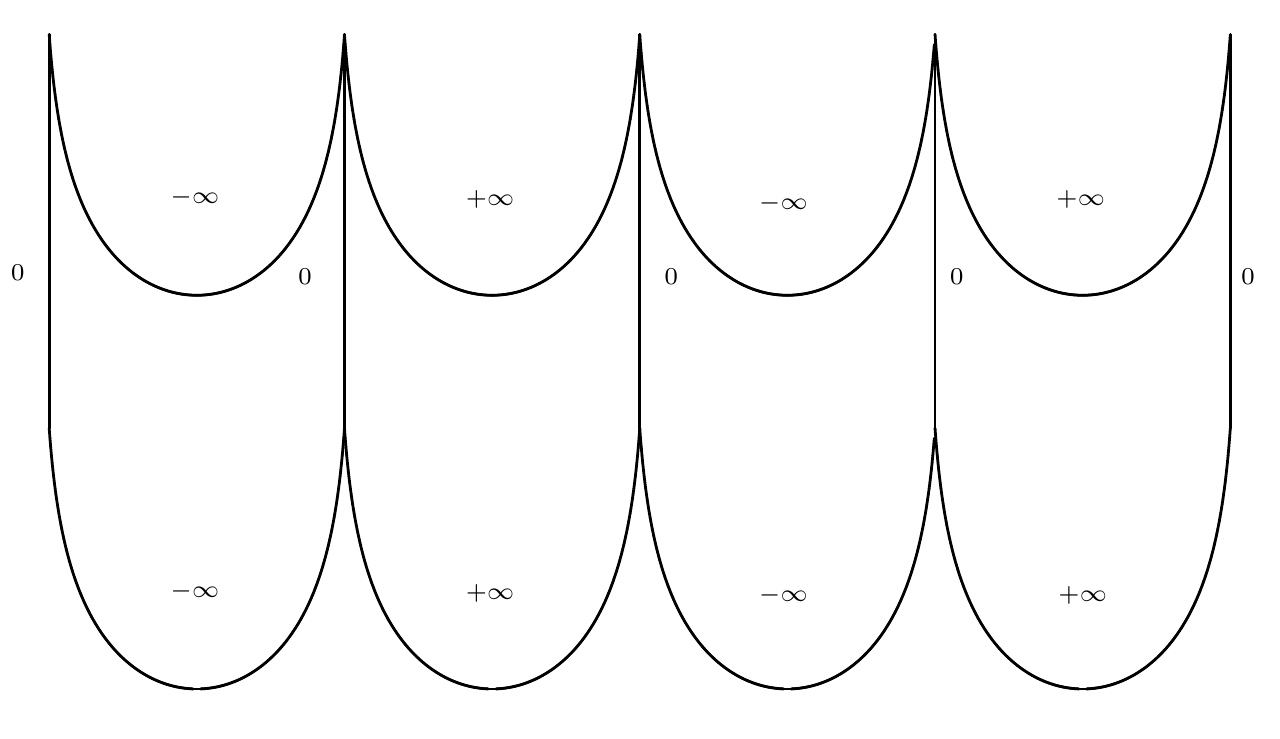}
\captionof{figure}{Reflections of the basic solution.\label{fig-2}}
\end{center}

On the other hand, if $r>s$ are fixed, then choosing $b-a>0$ small enough in \eqref{length dom.}, we can guarantee that ${\rm L}_f[A_1] + {\rm L}_f[A_2] > {\rm L}_f[C_1] + {\rm L}_f[C_2]$. In particular, if we rename $C_i$ by $B_i$, there are $b>a$ and $r>s$ so that ${\rm L}_f[A_1] + {\rm L}_f[A_2] = {\rm L}_f[B_1] + {\rm L}_f[B_2]$, and we obtain the structural condition of case (b) in  Theorem \ref{Existence}.

%%%%%%%%%%%%%%%%%%%%%%%%%%%%%%%%%%%%%%%%
%%%%%%%%%%%%%%%%%%%%%%%%%%%%%%%%%%%%%%%%

\subsection{Examples in $\mathbb{H}^2\times\R$}

Consider the hyperbolic plane $\h^2$ as a warped product $\h^2 = \R\times_{e^{x}}\R$ with the metric 
\begin{equation}\label{hyp-metr}
dx^2 + e^{2x}ds^2.
\end{equation} 
Then the vector field $\partial_s$ is a Killing field with norm $|\partial_s|_{(x,s)} = e^{x}$, and the $x$-axis is an integral curve of the distribution orthogonal to $\partial_s$. In this case we can take the vertical plane $\mathbb{P}$ in $\h^2\times\R$ to be the vertical plane over $x$-axis 
	\[
		\mathbb{P}=\{(x,t,s) \colon x,t\in\R,\, s=0\},
	\]
and with this choice we have $f=e^{c\frac{t}{2}}e^x$. Recall that we are endowing $\mathbb{P}$ with metric $h_c=e^{ct}(dx^2+dt^2)$. Furthermore, by \eqref{f-geodesic curvature} we have that $\sigma$ is a $f$-geodesic if 
	\[
		{\rm k}_{h_c}[\sigma]=h_c\left(\frac{\nabla f}{f}, N_\sigma\right),
	\] 
where $N_\sigma$ is the unit normal along $\sigma$ and $\nabla f$ is taken with respect to the metric $h_c$ in $\mathbb{P}.$ Since 

\begin{align*}
\frac{\nabla f}{f} &= e^{-ct}\left(\frac{c}{2}\p_t+\p_x\right), \\
{\rm k}_{h_c}[\sigma] &= e^{-ct/2}{\rm k}_{g_0}[\sigma] - e^{ct/2} g_0\left(\frac{c}{2}\partial_t, N\right), 
\end{align*}
and  $N_\sigma=e^{-ct/2}N$,
where $N$ denotes the unit normal along $\sigma$ with respect to $g_0=dx^2+dt^2$ and ${\rm k}_{g_0}[\sigma]$ denotes the scalar geodesic curvature of $\sigma$ with respect to $g_0,$ we have
\begin{equation}\label{f-geodesic in HxR}
	{\rm k}_{g_0}[\sigma] = g_0\left(c\partial_t+\p_x, N \right),
\end{equation}
From this equality we can conclude that lines in the direction $c\p_t+\p_x$ are $f$-geodesics in $\mathbb{P}$. 

To compute the other $f$-geodesics, let us denote $\vec{\tau}=\p_x+c\partial_t$ and $\vec{\varsigma}=c\partial_x-\p_t$ and notice that $\{\vec{\varsigma},\vec{\tau}\}$ is a positive frame of $\mathbb{P}.$ Assume that $\sigma(x)=x\vec{\varsigma}+\varphi(x)\vec{\tau},$ where $x\in\R$ and $\varphi$ is a smooth function. From \eqref{f-geodesic in HxR} we can conclude that 
\[
	\frac{\varphi''}{1+(\varphi')^2}=|\vec{\tau}|^2.
\]
Consequently $\varphi(x)=-|\vec{\tau}|^{-2}\log\cos(|\vec{\tau}|^{2}x)$ for $x\in\left(-\pi/(2|\vec{\tau}|^{2}),\pi/(2|\vec{\tau}|^{2})\right)$. Using translation of $\sigma$ we can conclude that $f$-geodesics of $\mathbb{P}$ are either lines in the direction of $\vec{\tau}$ or translating the curve $\sigma$ above, which is the grim reaper curve in the direction of $\vec{\tau}.$
Finally, the argument of the subsection \ref{EX. R} allows us to conclude the existence of similar basic domains. 

\begin{remark}
We observe that the induced metric $h_c$ on $\mathbb{P}$ is exactly the same as the metric $g_c$ in Subsection \ref{EX. R}. Therefore, if we use coordinates $(x,t)$ in $\mathbb{P}$ instead of $(x_1, x_3)$ then the admissible domains are exactly the same in both spaces. 
\end{remark}

Notice also that since $\h^{2}=\R\times_{e^{x}}\R$, by Remark \eqref{f-prop.}, we can conclude the following.
\begin{Proposition}
The hypersurface $\sigma\times\R$ is a properly embedded translating soliton in $\mathbb{H}^2\times\R$ with respect to $\p_t$. Moreover, $\alpha\times\R$ is a properly embedded translating soliton in $\mathbb{H}^2\times\R,$ when $\alpha$ is any line in the direction $\vec{\tau}=\p_x+c\partial_t.$
\end{Proposition}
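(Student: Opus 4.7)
The plan is to combine the two key correspondences already developed in the paper: first, the equivalence between $f$-geodesics in $\mathbb{P}$ and minimal Killing cylinders in $(M\times\R, g_c)$ established in Remark \ref{f-prop.} (1); and second, Ilmanen's Lemma \ref{Ilmanen}, which identifies minimal surfaces with respect to $g_c$ with translating solitons with respect to $\partial_t$ in the product metric $g_0$. Since $\sigma$ and $\alpha$ were explicitly computed above to be $f$-geodesics in $(\mathbb{P}, h_c)$, these two ingredients immediately give the soliton property.

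More precisely, I would first invoke Remark \ref{f-prop.} (1) to conclude that the mean curvature of $\sigma\times\R$ (respectively $\alpha\times\R$) in $(\mathbb{H}^2\times\R, g_c)$ equals $\mathrm{k}_f[\sigma]$ (respectively $\mathrm{k}_f[\alpha]$), and hence vanishes identically since $\sigma$ and $\alpha$ are $f$-geodesics. This shows that both Killing cylinders are minimal hypersurfaces in the conformally changed metric $g_c = e^{ct}g_0$. Applying Lemma \ref{Ilmanen} in the reverse direction, these minimal surfaces are then translating solitons of the mean curvature flow in $(\mathbb{H}^2\times\R, g_0)$ with translation speed $c$ and translation direction $\partial_t$.

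It remains to check that the cylinders are properly embedded. For $\alpha\times\R$, since $\alpha$ is an entire line in $\mathbb{P}$ in the direction $\vec\tau$, the cylinder $\Psi(\alpha,\R)$ is the image of $\R\times\R$ under the flow $\Psi$, which is a diffeomorphism onto its image because $\partial_s$ is a non-singular Killing field, so $\alpha\times\R$ is a properly embedded surface. For $\sigma\times\R$, the curve $\sigma$ is the grim reaper in the direction $\vec\tau$, defined on the open interval $\bigl(-\pi/(2|\vec\tau|^2), \pi/(2|\vec\tau|^2)\bigr)$, with $\varphi(x)\to+\infty$ at both endpoints; hence $\sigma$ is properly embedded in $\mathbb{P}$, and the same argument using the flow $\Psi$ gives that $\sigma\times\R$ is properly embedded in $\mathbb{H}^2\times\R$.

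There is no real obstacle here: the proposition is essentially a bookkeeping consequence of Remark \ref{f-prop.} (1), Lemma \ref{Ilmanen}, and the explicit parametrizations of $\sigma$ and $\alpha$ given in the preceding paragraphs. The only mild point requiring attention is proper embeddedness of $\sigma\times\R$, which reduces to verifying that the grim reaper $\sigma$ exits every compact set of $\mathbb{P}$ as $x$ approaches the endpoints of its domain of definition; this is immediate from the formula $\varphi(x)=-|\vec\tau|^{-2}\log\cos(|\vec\tau|^{2}x)$.
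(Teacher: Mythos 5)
Your proposal is correct and follows essentially the same route the paper intends: the paper deduces the proposition directly from the computation that $\sigma$ and $\alpha$ are $f$-geodesics, the correspondence in Remark \ref{f-prop.} between $f$-geodesics and minimal Killing cylinders in $(\mathbb{H}^2\times\R,g_c)$, and Lemma \ref{Ilmanen}. Your additional verification of proper embeddedness via the explicit grim reaper formula is a welcome detail the paper leaves implicit, but it does not change the approach.
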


\begin{Remark}
The example $\alpha \times \mathbb{R}$ was already known in \cite{Lira}, but $\sigma \times \mathbb{R}$ is a new example of a properly embedded translating soliton in $\mathbb{H}^2\times\R.$
\end{Remark}

%%%%%%%%%%%%%%%%%%%
%%%%%%%%%%%%%%%%%%%
%%%%%%%%%%%%%%%%%%%
\bibliographystyle{amsplain, amsalpha}

\end{document}